\newtheorem{theorem}{Theorem}
\newtheorem{corollary}[theorem]{Corollary}
\newtheorem{lemma}[theorem]{Lemma}
\newtheorem{proposition}[theorem]{Proposition}
\newenvironment{claimproof}{\noindent\textit{Proof.}}{\hfill$\square$}
\newtheorem{question}[theorem]{Question}
\theoremstyle{definition}
\newtheorem{definition}[theorem]{Definition}
\theoremstyle{remark} 
\newtheorem{remark}[theorem]{Remark}
\crefname{remark}{Remark}{Remarks}
\theoremstyle{definition}
\newtheorem{claim}{Claim}
\theoremstyle{remark}
\newtheorem{case}{Case}
\DeclarePairedDelimiter\floor{\lfloor}{\rfloor}
\DeclareMathOperator{\dist}{dist}
\DeclareMathOperator{\aff}{aff}
\DeclareMathOperator{\conv}{conv}
\DeclareMathOperator{\st}{star}
\DeclareMathOperator{\a-st}{astar}
\DeclareMathOperator{\lk}{link}
\newcommand{\R}{\mathbb{R}}
\newcommand{\A}{\mathcal{A}}
\newcommand{\C}{\mathcal{C}}
\newcommand{\B}{\mathcal{B}}
\newcommand{\St}{\mathcal{S}}
\newcommand{\Lk}{\mathcal{L}}
\crefname{rmk}{Remark}{Remarks}
\crefname{problem}{Problem}{Problems}
\date{\today}
\title{The linkedness of cubical polytopes}
\author{Hoa T. Bui}
\address{Centre for Informatics and Applied Optimisation, Federation University Australia}
\email{\texttt{h.bui@federation.edu.au}}
\author{Guillermo Pineda-Villavicencio \& Julien Ugon}
\address{Centre for Informatics and Applied Optimisation, Federation University Australia\\School of Information Technology, Deakin University, Australia}
\email{\texttt{julien.ugon@deakin.edu.au}} 
\email{\texttt{work@guillermo.com.au}}
\thanks{Hoa T. Bui is supported by an Australian Government Research Training Program (RTP) Stipend and RTP Fee-Offset Scholarship through Federation University Australia. Julien Ugon's research was partially supported by ARC discovery project DP180100602.}
\keywords{$k$-linked, cube,  cubical polytope, connectivity, separator, linkedness}
\subjclass[2010]{Primary 52B05; Secondary 52B12}
\begin{document}
\begin{abstract} A cubical polytope is a polytope with all its facets being combinatorially equivalent to cubes. The paper is concerned with the linkedness of the graphs of cubical polytopes. A graph with at least $2k$ vertices is $k$-linked if, for every set of $2k$ distinct vertices organised in arbitrary $k$ unordered pairs of vertices, there are $k$ vertex-disjoint paths joining the vertices in the pairs. 

Larman and Mani in 1970 proved that simplicial $d$-polytopes, $d$-dimensional polytopes with all their facets being combinatorially equivalent to simplices, are $\floor{(d+1)/2}$-linked;  this is the maximum possible linkedness given the facts that a $\floor{(d+1)/2}$-linked graph is at least $(2\floor{(d+1)/2}-1)$-connected and that some of these graphs are $d$-vertex-connected but not $(d+1)$-vertex-connected. 

Here we establish that $d$-dimensional cubical polytopes are also $\floor{(d+1)/2}$-linked for every $d\ne 3$; this is again the maximum possible linkedness for such a class of polytopes.
\end{abstract}
\maketitle

\section{Introduction}

The {\it graph} $G(P)$ of a polytope $P$ is the undirected graph formed by the vertices and edges of the polytope. This paper studies the the linkedness of {\it cubical $d$-polytopes}, $d$-dimensional polytopes with all their facets being cubes.  By a cube we mean any polytope that is combinatorially equivalent to a cube; that is, one whose face lattice is isomorphic to the face lattice of a cube.   
 
Denote by $V(X)$ the vertex set of a graph or a polytope $X$. Given sets $A,B$ of vertices in a graph, a path from $A$ to $B$, called an {\it $A-B$ path}, is a (vertex-edge) path $L:=u_{0}\ldots u_{n}$ in the graph such that $V(L)\cap A=\{u_{0}\}$  and $V(L)\cap B=\{u_{n}\}$. We write $a-B$ path instead of $\{a\}-B$ path, and likewise, write $A-b$ path instead of $A-\{b\}$. 

Let $G$ be a graph and $X$ a subset of $2k$ distinct vertices of $G$. The elements of $X$ are called {\it terminals}. Let $Y:=\{\{s_{1},t_{1}\}, \ldots,\{s_{k},t_{k}\}\}$ be an arbitrary labelling and (unordered) pairing of all the vertices in $X$. We say that $Y$ is {\it linked} in $G$ if we can find disjoint $s_{i}-t_{i}$ paths for $i\in [1,k]$, the interval $1,\ldots,k$. The set $X$ is {\it linked} in $G$ if every such pairing of its vertices is linked in $G$. Throughout this paper, by a set of disjoint paths, we mean a set of vertex-disjoint paths. If $G$ has at least $2k$ vertices and every set of exactly $2k$ vertices is linked in $G$, we say that $G$ is {\it $k$-linked}. If the graph of a polytope is $k$-linked we say that the polytope is also {\it $k$-linked}.

Unless otherwise stated, the graph theoretical notation and terminology follow from \cite{Die05} and the polytope theoretical notation and terminology from \cite{Zie95}. Moreover, when referring to graph-theoretical properties of a polytope such as minimum degree, linkedness and connectivity, we mean properties of its graph.

Being $k$-linked imposes a stronger demand on a graph than just being $k$-vertex-connected, or $d$-connected for short. A $k$-linked graph needs to be at least $(2k-1)$-connected, and yet there are $(2k-1)$-connected graphs that are not $k$-linked. The classification of 2-linked graphs \cite{Sey80,Tho80} contextualised for 3-polytopes readily gives  examples of this phenomenon: with the exception of simplicial 3-polytopes, no 3-polytope, despite being  $3$-connected by Balinski's theorem \cite{Bal61}, is 2-linked. However, there is a linear function $f(k)$ such that every $f(k)$-connected graph is $k$-linked, which follows from works of Bollob\'as and Thomason \cite{BolTho96}; Kawarabayashi, Kostochka and Yu \cite{KawKosYu06}; and Thomas and Wollan \cite{ThoWol05}. In the case of polytopes, Larman and Mani \cite[Thm.~2]{LarMan70} proved that every $d$-polytope  is $\floor{(d+1)/3}$-linked, a result that was slightly improved to $\floor{(d+2)/3}$  in \cite[Thm.~2.2]{WerWot11}.  

The first edition of the Handbook of Discrete and Computational Geometry \cite[Problem 17.2.6]{GooORo97-1st} posed the question of whether or not every $d$-polytope is $\floor{d/2}$-linked. This question had already been answered in the negative by Gallivan in the 1970s with a construction of a $d$-polytope that is not $\floor{2(d+4)/5}$-linked; see \cite{Gal85}. A weak positive result however follows from \cite{ThoWol05}: every $d$-polytope with minimum degree at least $5d$ is $\floor{d/2}$-linked.
 
Restricting our attention to particular classes of polytopes gives stronger results. {\it Simplicial $d$-polytopes}, polytopes in which every facet is a simplex, are $\floor{(d+1)/2}$-linked \cite[Thm.~2]{LarMan70}. Since there are simplicial $d$-polytopes that are $d$-connected but not $(d+1)$-connected, the bound of $\floor{(d+1)/2}$ is best possible for this class of polytopes.   Polytopes with small number of vertices were considered in \cite{WerWot11}, where it was shown that $d$-polytopes with $d+\gamma+1$ vertices are $\floor{(d-\gamma+1)/2}$-linked for $0\le \gamma\le (d+2)/5$. 

  In his PhD thesis \cite[Question~5.4.12]{Ron09} Wotzlaw asked whether every cubical $d$-polytope is $\floor{d/2}$-linked. Here we answer the question in the strongest possible way. 

\noindent {\bf Theorem.}  {\it For every $d\ne 3$, a cubical $d$-polytope is $\floor{(d+1)/2}$-linked.   }

Our methodology relies on results on the connectivity of strongly connected subcomplexes of cubical polytopes, whose proof ideas were first developed in \cite{ThiPinUgo18v3}, and a number of new insights into the structure of $d$-cube (\cref{sec:cube}). One obstacle that forces some tedious analysis is the fact that the 3-cube is not 2-linked.
 
In line with the main result of \cite{ThiPinUgo18v3}, where it was proved that a cubical $d$-polytope of minimum degree $\delta$ is $\min\{\delta,2d-2\}$-connected, we wonder if the following is true.

\begin{question}\label{q:cubical-strong-linkedness}
For every $\delta\ne 3$, is a cubical polytope with minimum degree $\delta$ necessarily $\floor{(\delta+1)/{2}}$-linked?
\end{question}

\section{Preliminary results}
\label{sec:preliminary}
This section groups a number of results that will be used in later sections of the paper.

The next two propositions follow from the characterisation of 2-linked graphs carried out in \cite{Sey80,Tho80}. Both propositions also have proofs stemming from arguments in the form of \cref{lem:linear-paths}, a lemma used implicitly in the original proof of Balinski's theorem (\cref{thm:Balinski}) and made explicit in \cite[Thm.~3.1]{Sal67}; for the sake of completeness we give such proofs.

\begin{lemma}[{\cite[Thm.~3.1]{Sal67}}] \label{lem:linear-paths} Let $P$ be a $d$-polytope, and let $f$ be a linear function on $\mathbb{R}^{d}$ satisfying $f(x)>0$ for some $x\in P$. If $u$ and $v$ are vertices of $P$ with $f(u)\ge 0$ and $f(v)\ge 0$, then there exists a $u-v$ path $x_{0}x_{1}\ldots x_{n}$ with $x_{0}=u$ and $x_{n}=v$ such that $f(x_{i})>0$ for $i\in [1,n-1]$.  
\end{lemma}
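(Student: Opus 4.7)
The plan is to use a simplex-like pivoting argument: I will construct a $u$-$v$ walk in $G(P)$ whose intermediate vertices all satisfy $f > 0$, and then extract a simple path from it. Set $M := \max_{x \in P} f(x)$, which is attained at a vertex of $P$ since $f$ is linear; because $f(x) > 0$ somewhere on $P$, we have $M > 0$. Let $F_M$ denote the face of $P$ on which $f$ attains the value $M$.

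The engine of the argument is a standard LP-pivoting fact: any vertex $w$ of $P$ with $f(w) < M$ has some neighbor $w'$ in $G(P)$ with $f(w') > f(w)$, because $w$ fails to be a global (hence a local) maximizer of $f$ along edges. Starting from $u$ and iterating this pivot, I would produce a walk $u = u_0, u_1, \ldots, u_k$ in $G(P)$ terminating when $u_k \in V(F_M)$; since $f(u_k) > f(u_{k-1}) > \cdots > f(u_0) = f(u) \geq 0$, every $u_i$ with $i \geq 1$ satisfies $f(u_i) > 0$. An analogous walk $v = v_0, v_1, \ldots, v_\ell$ reaching $V(F_M)$ is constructed from $v$, with the same strict positivity on $v_1, \ldots, v_\ell$. (If $f(u) = M$ the walk from $u$ is trivial, and likewise for $v$.) The only delicate point is the first pivot when $f(u) = 0$, where I need a neighbor of strictly positive $f$-value rather than merely nonnegative; this is automatic, because $f(u) = 0 < M$ places $u$ in the pivoting regime so that some neighbor has $f$-value strictly greater than $0$.

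Finally, since $F_M$ is itself a polytope and polytope graphs are connected, I would select any $u_k$-$v_\ell$ path inside $G(F_M)$, all of whose vertices satisfy $f \equiv M > 0$. Concatenating the three pieces yields a $u$-$v$ walk in $G(P)$ whose intermediate vertices all satisfy $f > 0$. Extracting a simple $u$-$v$ path by shortcutting repeated vertices only removes vertices from the walk, so the resulting path retains the property that every intermediate vertex has $f > 0$, producing the required path. I do not anticipate a serious obstacle, as this is essentially the classical argument underlying the proof of Balinski's theorem.
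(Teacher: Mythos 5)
Your argument is correct: the pivoting fact (a vertex that is not a global maximizer of a linear functional has a neighbour of strictly larger value), the monotone walks from $u$ and $v$ into the maximising face $F_M$, the connection inside $G(F_M)$, and the final shortcutting to a simple path are all sound, and the endpoints are the only vertices that can have $f$-value zero, so they cannot survive as inner vertices of the extracted path. The paper does not prove this lemma itself but cites it from Sallee, describing it as the argument implicit in Balinski's theorem; your proof is exactly that classical argument, so there is nothing to add.
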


\begin{theorem}[Balinski {\cite{Bal61}}]\label{thm:Balinski} For every $d\ge 1$, the graph of a $d$-polytope is $d$-connected. 
\end{theorem}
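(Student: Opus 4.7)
The plan is to prove the contrapositive: show that removing any set $S\subseteq V(P)$ of at most $d-1$ vertices from the graph of a $d$-polytope $P$ leaves a connected graph. By padding, I may assume $|S|=d-1$. The strategy is the classical Salisbury-style argument using \cref{lem:linear-paths}: choose a separating hyperplane passing through $S$ together with an interior point of $P$, and let $f$ be the affine functional vanishing on that hyperplane. Then $V(P)$ partitions into $V^+$, $V^-$, $V^0$ according to the sign of $f$, with $S\subseteq V^0$.

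First I would pick a point $y$ in the interior of $P$ and choose a hyperplane $H$ containing $S\cup\{y\}$; this is possible because $|S\cup\{y\}|=d$, so they lie in some hyperplane (and the choice of $H$ is free in the degenerate case). Let $f$ be an affine functional with $H=\{x:f(x)=0\}$. Since $y\in\inte P$ lies on $H$, every neighbourhood of $y$ meets both open halfspaces, so both $V^+$ and $V^-$ are non-empty (containing the maximisers and minimisers of $f$ over $V(P)$). Now I split into two cases. If $V^0\setminus S\ne\emptyset$, pick $w$ in this set. For any vertex $u\in V^+$, apply \cref{lem:linear-paths} to $u$ and $w$ (both with $f\ge 0$) to obtain a $u$--$w$ path whose interior vertices lie in $V^+$, hence miss $S\subseteq V^0$. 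Symmetrically, any $u\in V^-$ is joined to $w$ by a path missing $S$ (apply the lemma to $-f$), and any $u\in V^0\setminus S$ is joined to $w$ through $V^+$. Thus $G(P)\setminus S$ is connected via the hub $w$.

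The delicate case is $V^0=S$, and this is where I expect the main obstacle. Here no vertex of $P$ outside $S$ lies on $H$, so the previous hub argument fails, and I must produce a direct bridge between $V^+$ and $V^-$ in $G(P)\setminus S$. The key observation is to look at the slice $Q:=P\cap H$. Because $y\in\inte P\cap H$, the polytope $Q$ is $(d-1)$-dimensional and therefore has at least $d$ vertices. The vertices of $Q$ consist of vertices of $P$ lying on $H$ (which are exactly $V^0=S$, contributing $d-1$ vertices) together with transversal intersections of edges of $P$ with $H$. Since $|V(Q)|\ge d>d-1$, at least one edge $e=uv$ of $P$ must cross $H$ transversally, producing a pair $u\in V^+$, $v\in V^-$ with $uv\in E(P)$ and $u,v\notin S$. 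Combined with \cref{lem:linear-paths} (which lets me route every vertex of $V^+$ to $u$ through $V^+$, and every vertex of $V^-$ to $v$ through $V^-$), the edge $uv$ yields the required bridge, proving that $G(P)\setminus S$ is connected.
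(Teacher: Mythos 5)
Your proof is correct. Note that the paper does not actually prove \cref{thm:Balinski}: it is quoted as a known result of Balinski, with the surrounding text only remarking that \cref{lem:linear-paths} is the lemma ``used implicitly in the original proof'' --- and that is precisely the lemma your argument is built on, so your write-up is a faithful, self-contained rendition of the classical proof rather than a different route. The two cases are handled soundly: the hub argument when some vertex outside $S$ lies on $H$, and, in the delicate case $V^0=S$, the slice $Q=P\cap H$ is indeed $(d-1)$-dimensional (it contains a relatively open neighbourhood of the interior point $y$ in $H$), so it has at least $d$ vertices, each of which is either a vertex of $P$ on $H$ or the transversal crossing point of an edge of $P$; since only $d-1$ vertices of $P$ lie on $H$, a crossing edge exists and supplies the bridge between $V^+$ and $V^-$.
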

 
Let  $X$ be a set of vertices in a graph $G$. Denote by $G[X]$ the subgraph of $G$ induced by $X$, the subgraph of $G$ that contains all the edges of $G$ with vertices in $X$. Write $G-X$ for $G[V(G)\setminus X]$. A path in the graph is called {\it $X$-valid} if no inner vertex of the path is in $X$. The {\it distance} between two vertices $s$ and $t$ in a graph $G$, denoted $\dist_{G}(s,t)$, is the length of a shortest path between the vertices.  
  
 \begin{definition}[Configuration 3F]\label{def:Conf-3F} Let $X$ be a set of at least four terminals in a 3-cube and let $Y$ be a  labelling and pairing of the vertices in $X$. A terminal of $X$, say $s_{1}$, is in {\it Configuration 3F} if the following conditions are satisfied:
 \begin{enumerate}
 \item[(i)] four vertices of $X$ appear in a 2-face $F$ of the cube;
 \item[(ii)] the terminals in the pair $\left\{s_{1},t_{1}\right\}\in Y$ are at distance two in $F$ (that is, $\dist_{F}(s_{1},t_{1})=2$); and 
\item[(iii)] the neighbours of $t_{1}$ in $F$ are all vertices of $X$.
 \end{enumerate}	
\end{definition}  
 
 Configuration 3F is the only configuration in a 3-cube that prevents the linkedness of a  pairing $Y$ of four vertices, as  \cref{prop:3-polytopes} attests. A sequence $a_{1},\ldots, a_{n}$ of vertices in a cycle is in {\it cyclic order} if, while traversing the cycle, the sequence appears in clockwise or counterclockwise order. It follows that, if pairing $Y:=\left\{\left\{s_{1},t_{1}\right\},\left\{s_{2},t_{2}\right\}\right\}$ of vertices in a 3-cube appears in cyclic order $s_{1},s_{2},t_{1},t_{2}$ in a 2-face, then all the vertices in $Y$ are in Configuration 3F.

\begin{proposition}\label{prop:3-polytopes} Let $G$ be the graph of a 3-polytope and let $X$ be a set of four vertices of $G$. The set $X$ is linked in $G$ if and only if there is no facet of the polytope containing all the vertices of $X$.
\end{proposition}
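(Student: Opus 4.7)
My plan is to prove both directions, using planarity of the graph of a $3$-polytope (Steinitz) for necessity and \cref{lem:linear-paths} for sufficiency. For the \emph{only if} direction, suppose a 2-face $F$ of $P$ contains all of $X$, label the terminals in cyclic order around $\partial F$ as $a_1,a_2,a_3,a_4$, and pick the pairing $\{\{a_1,a_3\},\{a_2,a_4\}\}$. I would embed $G(P)$ in the plane with $F$ as the outer face. Any $a_1-a_3$ path $L_1$, together with a boundary arc between $a_1$ and $a_3$, forms a Jordan curve in the closed bounded disk that separates $a_2$ from $a_4$, so no $a_2-a_4$ path can be vertex-disjoint from $L_1$, which prevents the pairing from being linked.

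For the \emph{if} direction, fix any pairing $\{\{s_1,t_1\},\{s_2,t_2\}\}$. The plan is to produce, depending on the geometric configuration, a linear function $f$ such that $\{f=0\}$ separates the two pairs in a usable sense, and then apply \cref{lem:linear-paths} once with $f$ (for the pair $s_1,t_1$) and once with $-f$ (for the pair $s_2,t_2$). Consider the segments $[s_1,t_1]$ and $[s_2,t_2]$. Since vertices of $P$ are extreme points, no three terminals are collinear, so if the two segments meet at all they cross in the relative interior of both, in which case the four terminals are coplanar. If the segments are disjoint, strict separation in $\mathbb{R}^3$ supplies $f$ with $f(s_1),f(t_1)>0>f(s_2),f(t_2)$, and the two applications of \cref{lem:linear-paths} produce paths $L_1,L_2$ whose vertices all have strictly opposite signs of $f$, hence vertex-disjoint.

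The remaining subcase is that $[s_1,t_1]$ and $[s_2,t_2]$ cross, which is the main obstacle because we can no longer strictly separate the two pairs. Take $H=\{f=0\}$ to be the common plane of the four coplanar terminals. The hypothesis rules out $H$ supporting $P$: otherwise $P\cap H$ would be a face of dimension at least $2$ (it contains four non-collinear points), hence a 2-face of $P$ containing $X$, contradicting the assumption. Thus $f$ attains strict positive and negative values on $P$, so \cref{lem:linear-paths} applied with $f$ yields $L_1:s_1-t_1$ with $f>0$ on all internal vertices, and applied with $-f$ yields $L_2:s_2-t_2$ with $f<0$ on all internal vertices. Since the terminal sets $\{s_1,t_1\}$ and $\{s_2,t_2\}$ are disjoint, and the internal vertices of $L_1$ and $L_2$ lie strictly on opposite sides of $H$ (and hence also avoid the terminals of the other path, which all lie on $H$), the paths $L_1$ and $L_2$ are vertex-disjoint, proving that $Y$ is linked.
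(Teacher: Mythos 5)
Your proof is correct, and while it runs on the same engine as the paper's --- two applications of \cref{lem:linear-paths}, once with $f$ and once with $-f$, to confine the two paths to opposite sides of a hyperplane --- your case decomposition is genuinely different. The paper always takes $H$ through the three terminals $s_1,s_2,t_1$ and then splits on whether $H$ supports a $2$-face or cuts the interior; in the supporting case it must additionally argue that $G(F)-\{s_2\}$ is connected to route $L_1$ inside the face while $L_2$ travels through the positive side. You instead split on whether the segments $[s_1,t_1]$ and $[s_2,t_2]$ are disjoint or cross (your observation that no three vertices of a polytope are collinear correctly rules out degenerate intersections). In the disjoint case strict separation of two disjoint compact convex sets gives a hyperplane with all of $\{s_1,t_1\}$ strictly positive and all of $\{s_2,t_2\}$ strictly negative, so the two applications of \cref{lem:linear-paths} already yield disjoint paths --- notably without invoking the hypothesis on $X$ at all. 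In the crossing case the four terminals are coplanar, the hypothesis forbids their common plane from supporting $P$, and the sign argument finishes as in the paper's second case. The net effect is that your version needs only \cref{lem:linear-paths} plus a separation theorem and never has to handle the ``supporting hyperplane of a $2$-face'' configuration, which is a mild simplification; the paper's choice of $H$ through three terminals has the advantage of working uniformly for every pairing without first examining the geometry of the segments.
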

\begin{proof} Let $P$ be a 3-polytope embedded in $\R^{3}$ and let  $X$ be an arbitrary set of four vertices in $G$. We first establish the necessary condition by proving the contrapositive. Let $F$ be a 2-face containing the vertices of $X$ and  consider a planar embedding of $G$ in which $F$ is the outer face. Label the vertices of $X$ so that they appear in the cyclic order $s_{1},s_{2},t_{1},t_{2}$. Then the paths $s_{1}-t_{1}$ and $s_{2}-t_{2}$ in $G$ must inevitably intersect, implying that $X$ is not linked.

Assume there  is no 2-face of $P$ containing all the vertices of $X$. Let $H$ be a (linear) hyperplane that contains $s_{1}$, $s_{2}$ and $t_{1}$, and let $f$ be a linear function that vanishes on $H$ (this may require a translation of the polytope). Without loss of generality, assume that $f(x)>0$ for some $x\in P$ and that $f(t_{2})\ge 0$. 

First consider the case that $H$ is a supporting hyperplane of a 2-face $F$.  The subgraph $G(F)-\{s_{2}\}$ is connected, and so there is an $X$-valid $L_{1}:=s_{1}-t_{1}$ path on  $G(F)$. Then, use \cref{lem:linear-paths} to find an $L_{2}:=s_{2}-t_{2}$ path in which each inner vertex has positive $f$-value. The paths $L_{1}$ and $L_{2}$ are clearly disjoint. 

Now consider the case that $H$ intersects the interior of $P$. Then there is a vertex in $P$ with $f$-value  greater than zero and a vertex with $f$-value less than zero. Use \cref{lem:linear-paths} to find  an $s_{1}-t_{1}$ path in which each inner vertex has negative $f$-value and an $s_{2}-t_{2}$ path in which each inner vertex has positive $f$-value. 
\end{proof}

The subsequent corollary follows at once from \cref{prop:3-polytopes}. 
\begin{corollary} No nonsimplicial 3-polytope is 2-linked.
\label{cor:nonsimplicial-3polytope}
\end{corollary}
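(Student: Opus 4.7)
The plan is to derive the corollary as a direct consequence of \cref{prop:3-polytopes}, since that proposition characterises precisely when a set of four vertices in a 3-polytope fails to be linked: exactly when they all lie on a common 2-face. So the task reduces to exhibiting, in any nonsimplicial 3-polytope $P$, some set of four vertices that share a facet.

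First I would unpack the definition of nonsimplicial: $P$ has at least one facet $F$ that is not a 2-simplex (triangle). Since every 2-face of a 3-polytope is a polygon, $F$ must then be a polygon with at least four vertices. Pick any four vertices of $F$ and call this set $X$; since $|X|=4$ and all of $X$ lies on the 2-face $F$, the hypothesis of \cref{prop:3-polytopes} fails for $X$.

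Applying \cref{prop:3-polytopes} to $X$ then yields that $X$ is not linked in $G(P)$: there exists some pairing $\{\{s_1,t_1\},\{s_2,t_2\}\}$ of $X$ for which no two disjoint $s_i-t_i$ paths exist (concretely, the pairing obtained by traversing $F$ in cyclic order $s_1,s_2,t_1,t_2$, which is the obstruction constructed in the first half of the proof of that proposition). By the definition of $2$-linkedness, the existence of a single 4-element set that is not linked suffices to conclude that $G(P)$ is not 2-linked, and therefore $P$ is not 2-linked.

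There is no real obstacle here; the only thing to check is that a nonsimplicial 3-polytope genuinely has a 2-face with at least four vertices, which is immediate from the fact that facets of a 3-polytope are 2-faces and these are polygons. The entire argument fits in a few lines once \cref{prop:3-polytopes} is available.
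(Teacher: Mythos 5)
Your argument is correct and is exactly the intended derivation: the paper states that the corollary follows at once from \cref{prop:3-polytopes}, and your unpacking (a nonsimplicial 3-polytope has a 2-face with at least four vertices, any four of which form an unlinked set by the proposition) is precisely that one-line deduction made explicit.
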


 The same reasoning employed in the proof of the sufficient condition of \cref{prop:3-polytopes} settles \cref{prop:4polytopes}.

\begin{proposition}[2-linkedness of 4-polytopes]\label{prop:4polytopes} Every 4-polytope is 2-linked.
\end{proposition}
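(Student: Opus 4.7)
The plan is to mimic the sufficient direction in the proof of \cref{prop:3-polytopes}, exploiting the fact that in $\mathbb{R}^{4}$ three points no longer determine a unique hyperplane, so there is a genuine degree of freedom in choosing a separating hyperplane. Let $P\subset\mathbb{R}^{4}$ be a 4-polytope with four terminals $X=\{s_{1},t_{1},s_{2},t_{2}\}$ paired as $\{\{s_{1},t_{1}\},\{s_{2},t_{2}\}\}$. Since no three distinct vertices of a polytope are collinear, $s_{1},s_{2},t_{1}$ are affinely independent and span a $2$-plane $\Pi$, and the hyperplanes in $\mathbb{R}^{4}$ through $\Pi$ form a one-parameter pencil.

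The first step is to select from this pencil a hyperplane $H$ that is \emph{not} a supporting hyperplane of $P$. Such $H$ exists because $\dim P=4>2=\dim \Pi$: the image of $P$ under the projection $\phi\colon\mathbb{R}^{4}\to\mathbb{R}^{4}/\Pi\cong\mathbb{R}^{2}$ is a $2$-dimensional convex body (it has nonempty interior, being the image of $\inte P$ under a surjective open linear map), so not every line through the origin in $\mathbb{R}^{2}$ supports $\phi(P)$, and hence not every hyperplane through $\Pi$ supports $P$. After a translation of $P$ we may take $H$ linear; let $f$ be a linear function vanishing on $H$. Since $H$ meets $\inte P$, $f$ attains both positive and negative values on $P$, and replacing $f$ by $-f$ if needed we may assume $f(t_{2})\ge 0$.

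I would then apply \cref{lem:linear-paths} twice. From $f(s_{2})=0$, $f(t_{2})\ge 0$ and the existence of a vertex of $P$ with $f>0$, the lemma yields an $s_{2}-t_{2}$ path $L_{2}$ whose inner vertices all satisfy $f>0$. Applied to $-f$, with endpoints $s_{1},t_{1}$ (both with $f=0$) and a vertex of $P$ with $f<0$, the lemma yields an $s_{1}-t_{1}$ path $L_{1}$ whose inner vertices all satisfy $f<0$. Comparing $f$-values, the inner vertices of $L_{1}$ (with $f<0$) avoid every terminal (each with $f\ge 0$) and every inner vertex of $L_{2}$ (with $f>0$); the inner vertices of $L_{2}$ (with $f>0$) avoid $s_{1},t_{1}$ (with $f=0$); and the four terminals themselves are distinct. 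Thus $L_{1}$ and $L_{2}$ are vertex-disjoint, and the pairing is linked.

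The principal obstacle—and the reason the same argument cannot be lifted to dimension $3$—is guaranteeing that $H$ can be chosen non-supporting. In $\mathbb{R}^{3}$ three vertices determine a unique plane, which may be forced to be the supporting hyperplane of a $2$-face containing all four terminals in cyclic order (exactly Configuration 3F), blocking $2$-linkedness; in $\mathbb{R}^{4}$ the one-parameter pencil through $\Pi$ always contains non-supporting hyperplanes, which lets the linear-function argument cleanly separate the two required paths into the disjoint tracks $\{f<0\}$ and $\{f>0\}$ regardless of how the four terminals sit on the faces of $P$.
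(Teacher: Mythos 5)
Your proof is correct, and it takes a noticeably different route from the paper's even though the key tool is the same. The paper's proof also rests on \cref{lem:linear-paths}, but it places the hyperplane $H$ through all four terminals and must then split into two cases: if $H$ supports a facet $F$, it routes $s_{1}-t_{1}$ inside $G(F)-\{s_{2},t_{2}\}$ (invoking Balinski's theorem for the $3$-connectivity of $F$) and pushes $s_{2}-t_{2}$ off $F$ via the lemma; only if $H$ cuts the interior does it use the two-sided sign argument you give. By anchoring $H$ on just the three terminals $s_{1},s_{2},t_{1}$ and exploiting the resulting one-parameter pencil to force $H$ to be non-supporting, you collapse the argument to the single interior-cutting case and dispense with Balinski's theorem entirely; your verification that the pencil contains a non-supporting member (via the $2$-dimensional image of $P$ in $\mathbb{R}^{4}/\Pi$, which has points strictly on both sides of some line through the origin, hence $f$ changes sign on $P$ and attains both signs at vertices) is sound, as is the disjointness check by comparing $f$-values. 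What the paper's version buys is uniformity — the identical two-case template is reused verbatim for \cref{prop:3-polytopes} and \cref{thm:4polytopes-strong-linkedness} — whereas your version buys a one-case proof and, as you note, isolates exactly why the argument breaks in dimension $3$: there the three terminals determine $H$ uniquely and it may be forced to support a $2$-face containing all four terminals, i.e.\ Configuration 3F.
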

\begin{proof} Let $G$ be the graph of a 4-polytope embedded in $\mathbb R^{4}$. Let $X$ be a given set of four vertices in $G$ and let $Y:=\{\{s_{1},s_{2}\},\{t_{1},t_{2}\}\}$ a labelling and pairing of the vertices in $X$. 

Consider a linear function $f$ that vanishes on a linear hyperplane $H$ passing through $X$. Consider the two cases in which either $H$ is  a supporting hyperplane of a facet $F$ of $P$ or $H$ intersects the interior of $P$.

Suppose $H$ is a supporting hyperplane of a facet $F$. First, find an $s_{1}-t_{1}$ path in the subgraph $G(F)-\{s_{2},t_{2}\}$, which is connected by Balinski's theorem. Second, use \cref{lem:linear-paths} to find an $s_{2}-t_{2}$ path that touches $F$ only at $\{s_{2},t_{2}\}$.     

If instead $H$ intersects the interior of $P$ then there is a vertex in $P$ with $f$-value  greater than zero and a vertex with $f$-value less than zero. Use \cref{lem:linear-paths} to find  an $s_{1}-t_{1}$ path in which each inner vertex has negative $f$-value and an $s_{2}-t_{2}$ path in which each inner vertex has positive $f$-value. \end{proof}

The definitions of polytopal complex and strongly connected complex play an important role in the paper. A {\it polytopal complex} $\C$ is a finite nonempty collection of polytopes in $\R^{d}$ where the faces  of each polytope in $\C$ all belong to $\C$ and where polytopes intersect only at faces (if $P_{1}\in \C$ and $P_{2}\in \C$ then $P_{1}\cap P_{2}$ is a face of both $P_{1}$ and $P_{2}$). The empty polytope is always in $\C$. The {\it dimension} of a complex $\C$ is the largest dimension of a polytope in $\C$;  if $\C$ has dimension $d$ we say that $C$ is a {\it $d$-complex}. Faces of a complex $\C$ of largest and second largest dimension are called {\it facets} and {\it ridges}, respectively. If each of the faces of a complex $\C$ is contained in some facet we say that $\C$ is {\it pure}. 

Given  a polytopal complex $\C$ with vertex set $V$ and a subset $X$ of $V$,  the subcomplex of $\C$ formed by all the faces of $\C$ containing only vertices from $X$ is called {\it induced} and is denoted by $\C[X]$.  Removing from $\C$ all the vertices in a subset $X\subset V(\C)$  results in the subcomplex $\C[V(\C)\setminus X]$, which we write as $\C-X$. If $X=\{x\}$ we write $\C-x$ rather than $\C-\{x\}$. We say that a subcomplex $\C'$ of a complex $\C$ is a {\it spanning} subcomplex of $\C$ if $V(\C')=V(\C)$. The {\it graph} of a complex is the undirected graph formed by the vertices and edges of the complex; as in the case of polytopes, we denote the graph of a complex $\C$ by $G(\C)$.  A pure polytopal complex $\C$ is {\it strongly connected} if every pair of facets $F$ and $F'$ is connected by a path $F_{1}\ldots F_{n}$ of facets in $\C$ such that $F_{i}\cap F_{i+1}$ is a ridge of $\C$ for $i\in [1,n-1]$, $F_{1}=F$ and $F_{n}=F'$; we say that such a path is a {\it $(d-1,d-2)$-path} or a {\it facet-ridge path} if the dimensions of the faces can be deduced from the context. 

The relevance of strongly connected complexes stems from a result of Sallee that is described below.

\begin{proposition}[{\cite[Sec.~2]{Sal67}}]\label{prop:connected-complex-connectivity} For every $d\ge 1$, the graph of a strongly connected $d$-complex is $d$-connected. 
\end{proposition}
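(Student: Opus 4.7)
The plan is to derive $d$-connectivity directly from Balinski's theorem applied facet by facet, and then to glue the facet-level information together along the facet--ridge paths guaranteed by strong connectivity; no induction on $d$ is needed.

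Fix an arbitrary $S\subset V(\C)$ with $|S|\le d-1$ and two vertices $u,v\in V(\C)\setminus S$; the goal is to exhibit a $u$--$v$ path in $G(\C)-S$. The first step is a facet-level observation. Each facet $F$ of $\C$ is a $d$-polytope, so \cref{thm:Balinski} gives that $G(F)$ is $d$-connected. Because every $d$-polytope has at least $d+1$ vertices, $|V(F)|\ge d+1>d-1\ge|S|$, and hence $G(F)-S$ is non-empty and connected. The same vertex-count argument applied to any ridge $R$ of $\C$ (a $(d-1)$-polytope, hence with at least $d$ vertices) shows that $V(R)\setminus S\ne\emptyset$.

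The second step is a gluing lemma for two adjacent facets. If facets $F_{1},F_{2}$ of $\C$ meet in a ridge $R$, I would pick any $r\in V(R)\setminus S$; then $r$ lies simultaneously in $V(F_{1})\setminus S$ and $V(F_{2})\setminus S$, and since $G(F_{1})-S$ and $G(F_{2})-S$ are each connected by the first step, their union $G(F_{1}\cup F_{2})-S$ is connected through $r$.

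The final step is to combine these observations with strong connectivity. Choose facets $F,F'$ of $\C$ containing $u$ and $v$, respectively; by hypothesis there is a facet-ridge path $F=F_{1},\ldots,F_{n}=F'$ in $\C$. Iterating the gluing step along this path yields that $G(F_{1}\cup\cdots\cup F_{n})-S$ is connected, so $u$ and $v$ are joined in $G(\C)-S$, establishing $d$-connectivity. There is no real obstacle here: the whole argument hinges on the two vertex-count inequalities $|V(F)|\ge d+1$ and $|V(R)|\ge d$, both comfortably larger than $|S|=d-1$, and the only nontrivial input is Balinski's theorem itself.
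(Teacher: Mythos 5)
Your argument is correct and complete: each facet is a $d$-polytope, so by \cref{thm:Balinski} its graph survives the deletion of $|S|\le d-1$ vertices, each ridge has at least $d$ vertices and so retains a vertex to glue adjacent facets together, and strong connectivity (which includes purity, so every vertex lies in a facet) lets you chain these gluings from a facet containing $u$ to one containing $v$. The paper does not prove this proposition but cites it from Sallee, and your proof is essentially the standard argument from that source, so nothing further is needed.
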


Strongly connected complexes can be defined from a $d$-polytope $P$. Two basic examples are given by the complex of all faces of $P$, called the {\it complex} of $P$ and denoted by $\C(P)$, and the complex of all proper faces of $P$, called the {\it boundary complex} of $P$ and denoted by $\B(P)$.  For a polytopal complex $\C$, the {\it star} of a face $F$ of $\C$, denoted $\st(F,\C)$, is the subcomplex of $\C$ formed by all the faces containing $F$, and their faces; the {\it antistar} of a face $F$ of $\C$, denoted $\a-st(F,\C)$, is the subcomplex of $\C$ formed by all the faces disjoint from $F$; and the {\it link} of a face $F$, denoted $\lk(F,\C)$, is the subcomplex of $\C$ formed by all the faces of $\st(F,\C)$ that are disjoint from $F$. That is, $\a-st(F,\C)=\C-V(F)$ and $\lk(F,\C)=\st(F,\C)-V(F)$. Unless otherwise stated, when defining stars, antistars and links in a polytope, we always assume that the underlying complex is the boundary complex of the polytope.    
 
 Let   $v$ be a vertex in a $d$-cube $Q_{d}$ and let $v^{o}$ denote the vertex at distance $d$ from $v$, called the vertex {\it opposite} to $v$. The star of a vertex $v$ in the boundary complex of a $d$-cube $Q_{d}$ is the subcomplex $Q_{d}-v^{o}$, the subcomplex induced by $V(Q_{d})\setminus \{v^{o}\}$.
\begin{remark}
\label{rmk:opposite-vertex}
The antistar of $v$ coincides with the star of $v^{o}$. Consequently, the link of $v$ in a $d$-cube $Q_{d}$ is the subcomplex $Q_{d}- \{v, v^{o}\}$.  \end{remark}

Figure~\ref{fig:4-cube} depicts the star and link of a vertex in the 4-cube.

\begin{figure}
\includegraphics{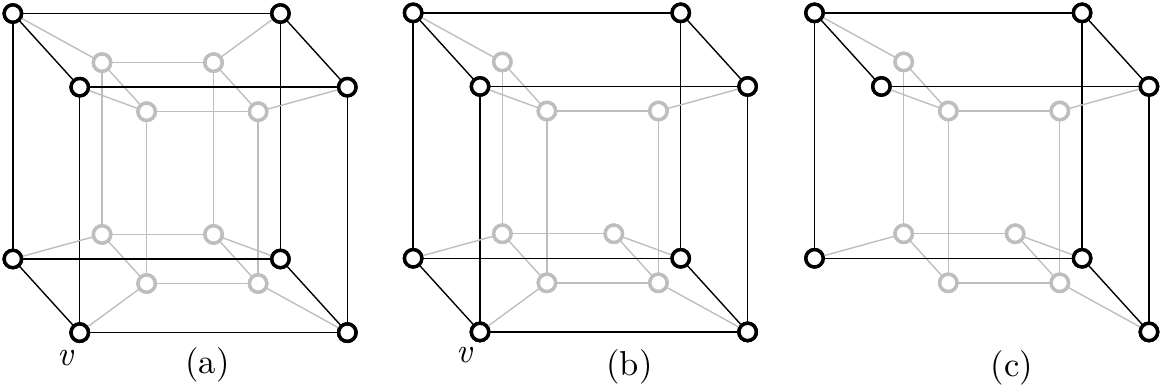}
\caption{Complexes in the 4-cube. {\bf (a)} The 4-cube with a vertex $v$ highlighted. {\bf (b)} The star of the  vertex $v$. {\bf (c)} The link of the vertex $v$.}\label{fig:4-cube} 
\end{figure} 

Some of the aforementioned complexes defined from a $d$-polytope are indeed strongly connected $(d-1)$-complexes, as the next proposition attests. The parts about the boundary complex and the antistar of a vertex already appeared in \cite{Sal67}. A proof of \cref{prop:st-ast-connected-complexes}, which uses the notation and terminology of this paper, was provided in \cite[Prop.~6]{ThiPinUgo18v3}. 

\begin{proposition}[{\cite[Cor.~2.11, Thm.~3.5]{Sal67}}]\label{prop:st-ast-connected-complexes} Let $P$ be a $d$-polytope. Then, the boundary complex $\B(P)$ of $P$, and the star and antistar of a vertex in $\B(P)$, are all strongly connected $(d-1)$-complexes of $P$.
\end{proposition}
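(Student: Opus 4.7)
The plan is to give a unified argument via the normal fan $\mathcal{N}(P)$: the complete polyhedral fan in $\R^d$ whose maximal cones $\sigma_F$ are indexed by the facets $F$ of $P$, with $\sigma_F$ and $\sigma_{F'}$ sharing a codimension-one face of the fan precisely when the facets $F$ and $F'$ share a ridge of $P$. For a vertex $v$ of $P$, the normal cone $N_P(v)=\bigcup_{F\ni v}\sigma_F$ is a full-dimensional pointed convex cone, and a facet $F$ of $P$ contains $v$ if and only if $\sigma_F\subseteq N_P(v)$.

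The key observation is that strong connectivity of each of the three complexes reduces to path-connectedness of a suitable region $R\subseteq\R^d$. Given two facets $F, F'$ of the complex, I would pick interior points $c\in\sigma_F\cap R$ and $c'\in\sigma_{F'}\cap R$, and take a generic piecewise-linear path $\gamma$ from $c$ to $c'$ inside $R$. A standard transversality argument (the faces of $\mathcal{N}(P)$ of codimension at least two form a lower-dimensional set that can be avoided after perturbation) lets $\gamma$ cross only codimension-one faces of the fan. The resulting sequence of maximal cones $\sigma_{F_1}=\sigma_F,\sigma_{F_2},\ldots,\sigma_{F_n}=\sigma_{F'}$ traversed by $\gamma$ translates directly into a facet-ridge path $F=F_1,\ldots,F_n=F'$, all of whose cones are contained in $R$ and hence correspond to facets of the complex.

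For the three complexes I would then choose: $R=\R^d$ for $\B(P)$; $R=\inte(N_P(v))$ for $\st(v,\B(P))$, since a maximal cone $\sigma_F$ meets $\inte(N_P(v))$ only if $F\ni v$; and $R=\R^d\setminus N_P(v)$ for $\a-st(v,\B(P))$, since a maximal cone $\sigma_F$ meets this complement only if $F\not\ni v$. The first two regions are trivially path-connected; the third is path-connected whenever $d\ge 2$ because it is the complement of a pointed full-dimensional convex cone. The degenerate case $d=1$ is handled directly by inspection.

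It remains to show that each complex is a pure $(d-1)$-complex, that is, every face is contained in some facet of the complex. For $\B(P)$ and for the star this is immediate from the definition. For the antistar, I would invoke the standard identity that every face $F$ of $P$ equals the intersection of the facets of $P$ containing it; so if $F\not\ni v$ then at least one facet of $P$ containing $F$ must also miss $v$, placing $F$ inside a facet of the antistar. The main obstacle, in my view, is the path-connectedness of $\R^d\setminus N_P(v)$ together with the genericity of $\gamma$ in the antistar case, as this is the only step where the local geometry of $P$ at $v$ plays an essential role; everything else is formal.
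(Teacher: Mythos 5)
First, note that the paper itself offers no proof of this proposition: it is quoted from \cite[Cor.~2.11, Thm.~3.5]{Sal67}, with a proof in the present notation deferred to \cite[Prop.~6]{ThiPinUgo18v3}. So your argument can only be assessed on its own terms, and on those terms it has a genuine gap, located exactly at the step you single out as the crux. For the wall-crossing argument to output facet--ridge paths, the maximal cones of your fan must be indexed by the facets of $P$; that forces the fan to be the \emph{face fan} $\{\operatorname{cone}(F)\}$ taken from an interior point, not the normal fan (whose maximal cones correspond to the \emph{vertices} of $P$, so that crossing walls there would produce vertex--edge paths instead). With that reading, $N_P(v)=\bigcup_{F\ni v}\sigma_F$ is the cone over the underlying point set of $\st(v,\B(P))$, and this union is in general neither convex nor pointed. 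Already for $P=[-1,1]^3$ with the fan based at the origin and $v=(1,1,1)$, the union of the three cones over the facets at $v$ contains $(1,-0.9,-0.9)$ and $(-0.9,1,-0.9)$ but not their midpoint $(0.05,0.05,-0.9)$, and it contains the whole line spanned by $(1,-1,0)$. Consequently the justification you give for the path-connectedness of $\R^{d}\setminus N_P(v)$ --- that it is the complement of a full-dimensional pointed convex cone --- does not apply. (The same non-convexity also undercuts the ``trivially path-connected'' claim for $\inte N_P(v)$, although that set is still easily seen to be connected, being star-shaped along the ray through $v$.)

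The statement you need is nevertheless true, but it is a topological fact rather than a convexity fact: $\R^{d}\setminus N_P(v)$ deformation retracts radially onto $|\B(P)|\setminus|\st(v,\B(P))|$, and since $|\lk(v,\B(P))|$ is a polyhedral $(d-2)$-sphere inside $|\B(P)|\cong S^{d-1}$, Jordan--Brouwer separation yields exactly two complementary components, namely the open star and the region you need to be connected. Supplying this input (or a substitute, such as Sallee's inductive argument or the linear-functional ``pushing'' argument used in \cite{ThiPinUgo18v3}) is required before the antistar case is complete. The rest of your proposal --- purity of the antistar via $F=\bigcap\{\text{facets containing }F\}$, and the cases of $\B(P)$ and of the star --- is essentially formal and is fine.
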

   
By considering a point $v'$ in $\R^{d}$ beyond a vertex $v$ of a $d$-polytope $P$ and using \cite[Thm.~5.2.1]{Gru03}, we get a statement similar to \cref{prop:st-ast-connected-complexes} for the link  of a vertex in $\B(P)$: \cref{prop:link-polytope}. We provide all the details, for the sake of completeness. 
  
Following \cite[pp.~78, 241]{Zie95},  we say that a facet $F$ is {\it visible} from a point $v'$ in $\R^{d}\setminus P$ if $v'$ belongs to the open halfspace that is determined by $\aff F$, the affine hull of the facet, and is disjoint from $P$; if instead $v'$ belongs to  the open halfspace that contains the interior of $P$, we say that  the facet is {\it nonvisible} from $v'$.  Further we say that a point $v'$ in $\R^{d}$ is {\it beyond} a face $K$ of $P$ if the facets containing $K$ are precisely those visible from $v'$.  
 
\begin{theorem}[{\cite[Thm.~5.2.1]{Gru03}}]
\label{thm:beneath-beyond} Let $P$ and $P'$ be two $d$-polytopes in $\mathbb{R}^d$, and let $v'$ be a vertex of $P'$ such that $v'\not\in P$ and $P'=\conv (P\cup \{v'\})$. Then
\begin{enumerate}[(i)]
\item a face $F$ of $P$ is a face of $P'$ if and only if there exists a facet  of $P$ containing $F$ that is nonvisible from $v$;

\item if $F$ is a face of $P$ then $F':=\conv(F\cup\{v'\})$ is a face of $P'$ if
 
\begin{enumerate}[(a)]
\item either $v'\in \aff F$;
\item or among the facets of $P$ containing $F$ there is at least one that is visible from $v'$ and at least one that is nonvisible.
\end{enumerate}
\end{enumerate}
Moreover, each face of $P'$ is of exactly one of the above three types.
\end{theorem}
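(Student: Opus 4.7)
The plan is to prove the beneath-beyond theorem through the duality between faces and supporting linear functionals: a nonempty set $G\subseteq Q$ is a face of a polytope $Q$ iff there exists a linear functional $c$ whose maximum $\max_{y\in Q}c\cdot y$ is attained precisely on $G$. For a face $F$ of $P$, let $N(F)$ denote the \emph{normal cone} at $F$, namely the set of all such $c$ (for $Q=P$); this cone is generated by the outer normals $c_{1},\ldots,c_{k}$ of the facets $F_{1},\ldots,F_{k}$ of $P$ that contain $F$. I would begin by observing that $v'$ is the only vertex of $P'$ outside $P$, so each face of $P'$ is either contained in $P$ or contains $v'$; this matches the split in the theorem's classification.

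For part (i), a facet $F_{i}$ of $P$ is nonvisible from $v'$ exactly when $c_{i}\cdot v'\le c_{i}\cdot x$ for $x\in F_{i}$, with strict inequality since $v'\notin P$. If such an $F_{i}\supseteq F$ exists, then a small perturbation of $c_{i}$ into the relative interior of $N(F)$ yields a functional that continues to satisfy this strict inequality, hence exposes $F$ on $P'$. Conversely, if every facet of $P$ containing $F$ is visible from $v'$, then every $c\in N(F)$, being a nonnegative combination of the $c_{i}$, satisfies $c\cdot v'>c\cdot x$ for $x\in F$, and no functional in $N(F)$ can expose $F$ in $P'$.

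For part (ii), set $F':=\conv(F\cup\{v'\})$. In case (b), I would choose a visible facet $F_{\mathrm{v}}\supseteq F$ and a nonvisible facet $F_{\mathrm{n}}\supseteq F$ with outer normals $c_{\mathrm{v}}$ and $c_{\mathrm{n}}$. The function $t\mapsto(tc_{\mathrm{v}}+(1-t)c_{\mathrm{n}})\cdot v'-\max_{y\in P}(tc_{\mathrm{v}}+(1-t)c_{\mathrm{n}})\cdot y$ is continuous and changes sign on $[0,1]$, producing some $t^{*}\in(0,1)$ for which $c^{*}:=t^{*}c_{\mathrm{v}}+(1-t^{*})c_{\mathrm{n}}$ has $c^{*}\cdot v'=\max_{y\in P}c^{*}\cdot y$; since $c^{*}\in N(F)$, the hyperplane $\{y:c^{*}\cdot y=c^{*}\cdot v'\}$ supports $P'$ and exposes exactly $F'$. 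In case (a), $v'\in\aff F$ means that any supporting hyperplane of $P$ that exposes $F$ already passes through $v'$, so the same hyperplane, viewed as supporting $P'$, exposes $\conv(F\cup\{v'\})$. To finish the classification, given an arbitrary face $G$ of $P'$, I would set $F:=G\cap P$, check that $F$ is a face of $P$, and match the visibility pattern around $F$ with one of the three cases.

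The main obstacle is showing in case (ii)(b) that the hyperplane determined by $c^{*}$ exposes exactly $F'$ and not a larger face of $P'$: this requires verifying that no vertex of $P\setminus F$ attains $\max_{y\in P}c^{*}\cdot y$, which in turn uses that for generic $t^{*}$ the extreme points of $\{y\in P:c^{*}\cdot y=\max_{y\in P}c^{*}\cdot y\}$ coincide with those of $F$. Case (ii)(a) also needs extra care, since $v'\in\aff F$ creates a face of $P'$ whose affine hull coincides with $\aff F$, and one must verify that $F'$ genuinely has the claimed face structure rather than collapsing into $F$ or a lower-dimensional object.
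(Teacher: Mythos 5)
The paper gives no proof of this statement: it is quoted from Gr\"unbaum's Theorem~5.2.1 and used as a black box, so there is nothing in the paper to compare your argument against line by line. Judged on its own, your supporting-functional/normal-cone strategy is the standard route to the beneath--beyond theorem, and parts (i) and (ii)(a) are essentially sound (modulo the degenerate case where $v'\in\aff F_{i}$ for some facet $F_{i}\supseteq F$, which is neither visible nor nonvisible under the paper's definitions; the paper's application avoids this by placing $v'$ off every facet hyperplane, but your claim that nonvisibility gives a \emph{strict} inequality ``since $v'\notin P$'' is not a valid justification).

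The genuine gap is in (ii)(b), precisely where you locate your ``main obstacle'', and your proposed repair cannot work. For every $t\in(0,1)$ the maximum of $tc_{\mathrm{v}}+(1-t)c_{\mathrm{n}}$ over $P$ is attained exactly on $F_{\mathrm{v}}\cap F_{\mathrm{n}}$ (both inequalities must be tight), so $\max_{y\in P}(tc_{\mathrm{v}}+(1-t)c_{\mathrm{n}})\cdot y$ is affine in $t$ and your function has a \emph{unique} zero $t^{*}$: there is no genericity to exploit. The resulting $c^{*}$ exposes $\conv\left((F_{\mathrm{v}}\cap F_{\mathrm{n}})\cup\{v'\}\right)$ as a face of $P'$, which is strictly larger than $F'$ whenever $F\subsetneq F_{\mathrm{v}}\cap F_{\mathrm{n}}$ (for instance, $F$ a vertex of a cube and $F_{\mathrm{v}},F_{\mathrm{n}}$ two of the facets through it, so that $F_{\mathrm{v}}\cap F_{\mathrm{n}}$ is an edge). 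The correct fix is to run the intermediate-value argument inside $\ri N(F)$ rather than along the segment from $c_{\mathrm{n}}$ to $c_{\mathrm{v}}$: on $N(F)$ the map $\phi(c):=c\cdot v'-\max_{y\in P}c\cdot y$ coincides with the linear map $c\mapsto c\cdot(v'-x_{0})$ for any fixed $x_{0}\in F$, it takes both signs by the visible/nonvisible hypothesis, and hence its zero set meets $\ri N(F)$ (start at any $c_{0}\in\ri N(F)$ and move towards whichever of $c_{\mathrm{v}},c_{\mathrm{n}}$ has the opposite sign of $\phi(c_{0})$; the zero occurs strictly before the endpoint, hence in the relative interior). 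Such a $c^{*}$ exposes exactly $F$ on $P$ and exactly $F'$ on $P'$. Finally, the ``moreover'' clause needs more than matching visibility patterns: you must also verify that $G\cap P$ is the face of $P$ exposed by an exposing functional of $G$, and that the three types are mutually exclusive; both are routine but currently absent.
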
 
 
\begin{proposition}[{\cite[Ex.~8.6]{Zie95}}]\label{prop:link-polytope} Let $P$ be a $d$-polytope. Then the link of a vertex in $\B(P)$ is combinatorially equivalent to the boundary complex of a $(d-1)$-polytope.
\end{proposition}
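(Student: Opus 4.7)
The plan is to apply the beneath-beyond construction. I will choose a point $v'\in \R^{d}$ beyond the vertex $v$, so that the facets of $P$ visible from $v'$ are exactly those containing $v$; by a generic perturbation I may further arrange that $v'\notin \aff F$ for every proper face $F$ of $P$, which rules out case (ii)(a) of \cref{thm:beneath-beyond}. Set $P':=\conv(P\cup\{v'\})$. By \cref{thm:beneath-beyond}, the faces of $P'$ containing $v'$ are precisely the pyramids $\conv(F\cup\{v'\})$ over the faces $F$ of $P$ that lie in both a visible and a nonvisible facet of $P$; call these the case (ii)(b) faces.

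Next, I will take a hyperplane $H\subset \R^{d}$ strictly separating $v'$ from the other vertices of $P'$ and set $Q:=P'\cap H$; this is a $(d-1)$-polytope (the transverse slice of $P'$ near the vertex $v'$). The map $F\mapsto F\cap H$, restricted to the faces of $P'$ containing $v'$, is an incidence-preserving bijection onto the faces of $Q$ that shifts dimension down by one. Consequently the proper nonempty faces of $Q$ are in order-preserving bijection with the case (ii)(b) faces of $P$, via the composite $F\mapsto \conv(F\cup\{v'\})\cap H$.

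It remains to identify the case (ii)(b) faces of $P$ with the faces of $\lk(v,\B(P))$. One direction is immediate: if $F$ lies in a visible facet but does not contain $v$, then $F\in \lk(v,\B(P))$ by definition of the link. For the converse, take any $F\in \lk(v,\B(P))$; by definition $F$ lies in some facet of $P$ containing $v$ (a visible facet), while $v\notin F$, so $F$ belongs to the antistar of $v$ in $\B(P)$, which by \cref{prop:st-ast-connected-complexes} is a pure $(d-1)$-complex. Hence $F$ is contained in some facet of that antistar, i.e.\ a facet of $P$ not containing $v$ (a nonvisible facet), so $F$ is of case (ii)(b). Chaining the three bijections yields a combinatorial equivalence $\B(Q)\cong \lk(v,\B(P))$. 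The main obstacle is the bookkeeping needed to confirm that each correspondence preserves the full face-lattice order; the key conceptual step is invoking the purity of the antistar to certify that every face of the link is of case (ii)(b).
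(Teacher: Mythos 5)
Your proposal is correct and follows essentially the same route as the paper: place a generic point $v'$ beyond $v$, apply the beneath--beyond theorem (\cref{thm:beneath-beyond}) to identify the faces of $P'=\conv(P\cup\{v'\})$ containing $v'$ with pyramids over the faces of $\lk(v,\B(P))$, and read off the link as the boundary complex of the vertex figure of $P'$ at $v'$. The only differences are presentational: you track the correspondence on all faces (using the purity of the antistar from \cref{prop:st-ast-connected-complexes} to certify the case (ii)(b) condition), whereas the paper argues at the level of ridges and facets of the star of $v'$.
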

 
 \begin{proof}Let $v$ be a vertex of $P$ and let $v'$ be a point in $\R^{d}\setminus P$ beyond $v$  so that $v'$ is not on the affine hull of any face of $P$. Suppose $P':=\conv (P\cup \{v'\})$.

The facets in the star of $v$ in $\B(P)$ are precisely those that are visible from $v'$, and every other facet of $P$, including the facets in the antistar of $v$ in $\B(P)$, is nonvisible from $v'$.  The link of $v$ is, by definition, the subcomplex of $\B(P)$ induced by the ridges of $P$ that are contained in a facet of the star of $v$, a facet visible from $v'$, and a facet of the antistar of $v$, a facet nonvisible from $v'$. Consequently, according to \cref{thm:beneath-beyond}(i), the ridges in $\lk (v,\B(P))$ are faces of $P'$. Furthermore, for every ridge $R\in \lk (v,\B(P))$, $R':=\conv (R\cup \{v'\})$ is  a facet of $P'$ (\cref{thm:beneath-beyond}(ii-b)), a pyramid over $R$ with apex $v'$; and every facet in the star of $v'$ in $\B (P')$ is one of these pyramids.   Hence, the vertex figure of $P'$ at $v'$, which  is a $(d-1)$-polytope \cite[Sec.~2.1]{Zie95}, is combinatorially equivalent to the link of $v$ in $P$, as desired.  
 \end{proof}

\cref{prop:link-polytope} is exemplified in \cref{fig:link-polytope}. 
\begin{figure}
\includegraphics{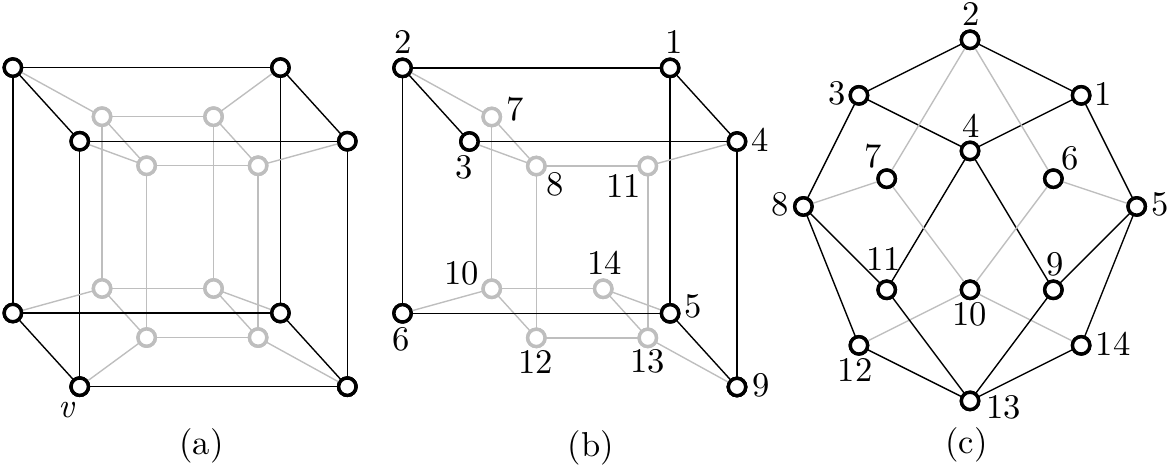}
\caption{The link of a vertex in the 4-cube. {\bf (a)} The 4-cube with a vertex $v$ highlighted. {\bf (b)}  The link of the vertex $v$ in the 4-cube. {\bf (c)} The link of the vertex $v$ as the boundary complex of the rhombic dodecahedron (\cref{prop:link-polytope}).} \label{fig:link-polytope}   
\end{figure}

\section{$d$-cube}  
\label{sec:cube}

In the $d$-cube $Q_{d}$, the facet disjoint from a facet $F$ is denoted by $F^o$, and we say that $F$ and $F^{o}$ is a pair of {\it opposite} facets. 

\begin{definition}[Projection $\pi$]\label{def:projection}
	For a pair of opposite facets $\{F,F^{o}\}$ of $Q_{d}$, define a projection $\pi^{Q_{d}}_{F^{o}}$ from $Q_{d}$ to $F^{o}$ by sending a vertex $x\in F$ to the unique neighbour $x^{p}_{F^o}$ of $x$ in $F^{o}$, and  a vertex $x\in F^{o}$ to itself (that is, $\pi^{Q_{d}}_{F^{o}}(x)=x$); write $\pi^{Q_{d}}_{F^o}(x)=x^{p}_{F^o}$ to be precise, or write $\pi(x)$ or $x^{p}$ if the cube $Q_{d}$ and the facet $F^o$ are understood from the context. 
\end{definition}

We extend this projection to sets of vertices: given a pair $\{F,F^{o}\}$ of opposite facets and a set $X\subseteq V(F)$, the projection $X^{p}_{F^{o}}$ or $\pi^{Q_{d}}_{F^{o}}(X)$ of $X$ onto $F^{o}$ is the set of the projections  of the vertices in $X$ onto $F^{o}$. For an $i$-face $J\subseteq F$, the projection $J^{p}_{F^{o}}$ or $\pi_{F^{o}}^{Q_{d}}(J)$ of $J$ onto $F^{o}$ is the $i$-face consisting of the projections of all the vertices of $J$ onto $F^{o}$. For a pair $\{F,F^{o}\}$ of opposite facets in $Q^{d}$, the restrictions of the projection $\pi_{F^{o}}$ to $F$  and the projection $\pi_{F}$ to $F^{o}$ are bijections.

Let  $Z$ be a set of vertices in the graph of a $d$-cube $Q_{d}$. If, for some pair of opposite facets $\{F,F^o\}$, the set $Z$ contains both a vertex $z\in V (F)\cap Z$ and its projection $z_{F^{o}}^p\in V (F^o)\cap Z$, we say that the pair $\{F,F^o\}$ is {\it associated} with the set $Z$ in $Q_{d}$ and that $\{z,z^p\}$ is an {\it associating pair}.  Note that an associating pair can associate only one pair of opposite facets.

In conjunction with connectivity results around strongly connected complexes in cubical polytopes, the next lemma lies at the core of our methodology. 
 
\begin{lemma}\label{lem:facets-association} Let $Z$ be a nonempty subset of $V(Q_{d})$. Then the number of pairs $\{F,F^o\}$ of opposite facets associated with $Z$ is at most $|Z|-1$. 
\end{lemma}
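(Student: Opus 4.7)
The plan is to recast the claim as a graph-theoretic statement about $G(Q_d)[Z]$. Each of the $d$ pairs of opposite facets of $Q_d$ corresponds to one coordinate direction: under the standard combinatorial identification $V(Q_d)=\{0,1\}^d$, the pair $\{F,F^o\}$ orthogonal to coordinate $i$ is associated with $Z$ precisely when $G(Q_d)[Z]$ contains an edge $\{z,z'\}$ with $z$ and $z'$ differing only in their $i$-th coordinate (this is exactly the condition that $z^p_{F^o}=z'$). Let $S\subseteq [1,d]$ be the set of directions so associated with $Z$; the target inequality is $|S|\leq |Z|-1$.

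For each $i\in S$, pick one such edge $e_i\subseteq Z$, and let $H$ be the subgraph of $G(Q_d)[Z]$ with edge set $\{e_i:i\in S\}$. Then $|E(H)|=|S|$ and $V(H)\subseteq Z$, so once $H$ is shown to be a forest the standard bound $|E(H)|\leq |V(H)|-1$ gives $|S|\leq |Z|-1$ (the case $S=\emptyset$ is trivial since $Z$ is nonempty, and in general a forest on a nonempty vertex set has at most $|V|-1$ edges).

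The single nontrivial step is the acyclicity of $H$. This rests on a basic parity observation about cubes: traversing an edge of $Q_d$ in direction $i$ flips coordinate $i$, so any closed walk in $G(Q_d)$ must use edges of each coordinate direction an even number of times. Consequently, a cycle in $Q_d$ employs each of its edge-directions at least twice, whereas by construction $H$ uses every direction at most once; hence $H$ contains no cycle. I expect this parity observation to be the only step with real content; the rest is bookkeeping.
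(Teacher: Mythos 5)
Your proof is correct and follows essentially the same route as the paper's: both identify associated pairs with edge-directions present in $G(Q_d)[Z]$ and exploit the parity fact that a cycle in the cube uses each direction at least twice to extract an acyclic subgraph witnessing every associated direction, then apply the forest bound $|E|\le |V|-1$. The only cosmetic difference is that you select one edge per direction up front and prove acyclicity directly, whereas the paper deletes edges from cycles to reach a spanning forest retaining all directions.
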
 

\begin{proof}
Let $G:=G(Q_{d})$ and let $Z\subset V(Q_{d})$ with $|Z|\ge 1$ be given. Consider a pair $\{F,F^{o}\}$ of opposite facets. Define a {\it direction} in the cube as the set of the $2^{d-1}$ edges between $F$ and $F^{o}$; each direction corresponds to a pair of opposite facets.  The $d$ directions partition the edges of the cube into sets of cardinality $2^{d-1}$.  (The notion of direction stems from thinking of the cube as a zonotope \cite[Sec.~7.3]{Zie95}) 

A pair of facets is associated with the set $Z$ if and only if the subgraph $G[Z]$ of $G$ induced by $Z$ contains an edge from the corresponding direction.

If a direction is present in a cycle $C$ of $Q_{d}$, then the cycle contains at least two edges from this direction. Indeed, take an edge $e=uv$  on $C$ that belongs to a direction between a pair $\{F,F^{o}\}$ of opposite facets.   After traversing  the edge $e$ from $u\in V(F)$ to $v\in V(F^{o})$, for the cycle to come back to the facet $F$, it must contain another edge from the same direction.   Hence, by repeatedly removing edges from cycles in $G[Z]$ we obtain a spanning forest of  $G[Z]$ that contains an edge for every direction present in $G[Z]$. As a consequence, the number of such directions is at most the number of edges in the forest, which is upper bounded by $|Z|-1$. (A {\it forest} is a graph with no cycles.) \end{proof}

The relevance of the lemma stems from the fact that a pair of opposite facets $\{F,F^{o}\}$ not associated with a given set of vertices $Z$ allows each vertex $z$ in $Z$ to have ``free projection''; that is, for every $z\in Z\cap V(F)$ the projection $\pi_{F^o}(z)$ is not in $Z$, and for $z\in Z\cap V(F^{o})$ the projection $\pi_{F}(z)$ is not in $Z$.

\subsection{Connectivity of the $d$-cube}

We next unveil some further properties of the cube that will be used in subsequent sections.

While it is true that the antistar of a vertex in a $d$-polytope is always a strongly connected $(d-1)$-complex (\cref{prop:st-ast-connected-complexes}), it is far from true that this extends to higher dimensional faces. Refer to \cite[Sec.~3]{ThiPinUgo18v3} for examples of $d$-polytopes in which this extension is not possible. This extension is however possible for the $d$-cube, as shown in \cite[Lem.~8]{ThiPinUgo18v3}.

\begin{lemma}[{\cite[Lem.~8]{ThiPinUgo18v3}}]
\label{lem:cube-face-complex} Let $F$ be a proper face in the $d$-cube $Q_{d}$. Then the antistar of $F$ is a strongly connected $(d-1)$-complex. 
\end{lemma}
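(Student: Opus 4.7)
The plan is to realise $Q_d$ as $[0,1]^d$, identify the facets of $\a-st(F)$ explicitly in coordinate terms, and then verify purity and the facet-ridge connectivity by direct combinatorial inspection.

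Any proper $k$-face $F$ of $Q_d$ is determined by a nonempty set $S\subseteq [d]$ of $d-k$ ``fixed'' coordinate directions together with a choice $\epsilon\colon S\to\{0,1\}$ of fixed values. For each of the $d$ coordinate directions $i$: if $i\in S$, then $F$ is contained in the facet $\{x_i=\epsilon(i)\}$ of $Q_d$ and disjoint from the opposite facet $H_i:=\{x_i=1-\epsilon(i)\}$; if $i\notin S$, then $F$ meets both facets of the pair. Hence the facets of $Q_d$ disjoint from $F$ are exactly the $|S|=d-k\ge 1$ facets $\{H_i : i\in S\}$, and these are the only candidate $(d-1)$-faces of $\a-st(F)$.

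For purity, let $G\in\a-st(F)$ be a face described by its own fixed set $T$ and values $\eta$. The disjointness $G\cap F=\emptyset$ in $[0,1]^d$ forces the existence of some $i\in S\cap T$ with $\epsilon(i)\ne\eta(i)$, since otherwise the two fixed patterns agree on $S\cap T$ and one can read off a common point by giving the remaining coordinates any values. Such an $i$ lies in $S$ and satisfies $\eta(i)=1-\epsilon(i)$, so $G\subseteq H_i$; thus $G$ sits inside a facet of $\a-st(F)$. For strong connectivity, if $|S|=1$ the antistar has a single facet and is trivially strongly connected. If $|S|\ge 2$, any two distinct facets $H_i,H_j$ lie in different coordinate directions and therefore meet in the $(d-2)$-face $R_{ij}:=\{x_i=1-\epsilon(i),\ x_j=1-\epsilon(j)\}$ of $Q_d$; since $R_{ij}\subseteq H_i$ and $H_i\cap F=\emptyset$, the ridge $R_{ij}$ also lies in $\a-st(F)$. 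The facet-ridge graph of $\a-st(F)$ is therefore the complete graph on $|S|$ vertices, and in particular connected.

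There is no real obstacle here: the coordinate picture of $Q_d$ turns both purity and the facet-ridge connectivity into elementary checks that would go through by inspection. The only spot that needs even mild attention is the observation that two faces of $[0,1]^d$ are disjoint precisely when their defining patterns disagree on a commonly fixed coordinate, which is exactly what routes every face of $\a-st(F)$ into one of the prescribed facets $H_i$.
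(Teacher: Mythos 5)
Your proof is correct. Note that the paper does not actually prove this lemma itself: it is imported as Lemma~8 of \cite{ThiPinUgo18v3}, so there is no internal proof to compare against, and your argument supplies a self-contained one. The coordinate realisation $Q_d=[0,1]^d$ makes everything explicit: the facets of the antistar are exactly the $|S|\ge 1$ facets $H_i=\{x_i=1-\epsilon(i)\}$ opposite to the facets containing $F$; the purity step correctly isolates the only point needing care, namely that two faces of the cube are disjoint precisely when their fixed coordinate patterns disagree on some commonly fixed direction, which routes every face of the antistar into some $H_i$; and since any two distinct $H_i,H_j$ meet in a $(d-2)$-face that is itself disjoint from $F$, the facet--ridge graph of the antistar is complete, which is more than the required strong connectivity. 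The only cosmetic addition I would make is to say explicitly that such a $(d-2)$-face is a ridge \emph{of the antistar} (a face of second-largest dimension of that pure $(d-1)$-complex), but this is immediate once purity and the dimension count are in place.
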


Given sets $A,B,X$ of vertices in a graph $G$,  the set  $X$  {\it separates} $A$ from $B$ if every $A-B$ path in the graph contains a vertex from $X$. A set $X$ separates two vertices $a,b$ not in $X$ if it separates $\{a\}$ from $\{b\}$. We call the set $X$ a {\it separator} of the graph. 

We will also require the following three assertions. 
 
\begin{proposition}[{\cite[Prop.~1]{Ram04}}]\label{prop:cube-cutsets} Any separator $X$ of cardinality $d$ in $Q_{d}$ consists of the $d$ neighbours of some vertex in the cube and the subgraph $G(Q_{d})-X$ has exactly two components, with one of them being the vertex itself.
\end{proposition}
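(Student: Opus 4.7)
I proceed by induction on $d$. The cases $d=1$ (a single edge) and $d=2$ (the $4$-cycle, whose only size-$2$ separators are the two pairs of opposite vertices, each coinciding with the neighbourhood of either remaining vertex) are immediate.

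For the inductive step ($d\geq 3$), let $X$ be a separator of $G(Q_{d})$ with $|X|=d$, and fix an arbitrary pair of opposite facets $\{F,F^{o}\}$. Write $X_F := X \cap V(F)$ and $X_{F^o} := X \cap V(F^o)$. I first show $X$ must meet both facets. If $X\subseteq V(F)$, then $|V(F)|=2^{d-1}>d$ for $d\geq 3$ forces $V(F)\setminus X\neq\emptyset$; any $u\in V(F)\setminus X$ yields the edge $u u^{p}_{F^{o}}$ into the connected graph $G(F^{o})$, making $G(Q_d)-X$ connected, a contradiction. Next I argue that one of $G(F)-X_F$, $G(F^{o})-X_{F^o}$ must be disconnected: otherwise, since $|V(F)\setminus X_F|=2^{d-1}-|X_F|>d-|X_F|=|X_{F^o}|$ for $d\geq 3$, some $u\in V(F)\setminus X_F$ has $u^{p}_{F^{o}}\notin X_{F^o}$, and the edge $u u^{p}_{F^{o}}$ links the two connected sides, again contradicting that $X$ is a separator.

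Assume by symmetry that $G(F)-X_F$ is disconnected. The $(d-1)$-connectivity of $G(F)\cong G(Q_{d-1})$ forces $|X_F|\geq d-1$, hence $|X_F|=d-1$ and $X_{F^o}=\{w\}$ for some vertex $w$. Applying the inductive hypothesis to $X_F$ in $Q_{d-1}$, we get $X_F=N_F(v)$ for some $v\in V(F)$, with the two components of $G(F)-X_F$ being $\{v\}$ and $R_F:=V(F)\setminus(\{v\}\cup N_F(v))$. I then split on the location of $w$. If $w=v^{o}$, then $X=N_F(v)\cup\{v^o\}=N(v)$ in $Q_d$, and $G(Q_d)-X$ decomposes as $\{v\}$ together with the connected set $R_F\cup(V(F^{o})\setminus\{v^{o}\})$, which is connected because $G(F^{o})-\{v^{o}\}$ is $(d-2)$-connected and every $u\in R_F$ has $u^{p}_{F^{o}}\in V(F^{o})\setminus\{v^{o}\}$. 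If instead $w\neq v^{o}$, the edge $v v^{o}$ links $v$ to the connected graph $G(F^{o})-\{w\}$, which absorbs all of $R_F$ via free projections unless $R_F$ is cut off entirely from $V(F^{o})\setminus\{w\}$; counting shows this can happen only when $|R_F|=1$, i.e.\ $2^{d-1}-d=1$, forcing $d=3$ and $R_F=\{w^o\}$ with $w^o$ opposite to $v$ in $F$. In that exceptional case $N_F(w^o)=N_F(v)=X_F$, so $X=N(w^o)$ and the proposition still holds.

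\textbf{Main obstacle.} The subtlest point is the degenerate sub-case in $d=3$ where $R_F$ collapses to a single vertex: a naive argument would declare a contradiction, but in fact that vertex is exactly the apex whose neighbourhood equals $X$. A small auxiliary observation used above is that $V(Q_d)\setminus(\{v\}\cup N(v))$ induces a connected subgraph, which follows from a greedy coordinate-flipping argument (add the desired coordinates before removing the others) that keeps the Hamming weight at least $2$ throughout.
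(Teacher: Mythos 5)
The paper does not prove \cref{prop:cube-cutsets} at all: it is imported from \cite{Ram04}, so there is no in-paper argument to compare against, and your induction stands as a correct, self-contained substitute. The structure --- split $Q_d$ into opposite facets $F,F^o$, show the separator meets both facets yet disconnects exactly one of $G(F)-X_F$, $G(F^o)-X_{F^o}$, pin down $|X_F|=d-1$ via the $(d-1)$-connectivity of $G(F)$ (\cref{thm:Balinski}), apply the inductive hypothesis inside $F$, and then case on the single vertex $w\in F^o$ --- is sound, and you correctly isolate the one genuinely delicate point: when $w$ is not the $F^o$-neighbour of $v$, the graph $G(Q_d)-X$ can fail to be connected only if the component $R_F$ projects entirely onto $w$, which by injectivity of the projection forces $|R_F|=2^{d-1}-d=1$, hence $d=3$, where the apex simply moves to the vertex of $F$ antipodal to $v$ and the conclusion survives. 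Two small points to tidy. First, your notation collides with the paper's: here $v^{o}$ is reserved for the vertex at distance $d$ from $v$, whereas you use it for the neighbour $\pi_{F^o}(v)$ of $v$ in $F^{o}$ (and likewise ``$w^{o}$ opposite to $v$ in $F$''); rename these to $v^{p}_{F^o}$ and the like to match \cref{def:projection}. Second, in the sub-case $w\ne\pi_{F^o}(v)$ you implicitly use that $R_F$ induces a connected subgraph when arguing that a single ``bad'' projection cannot strand it; that connectivity is indeed supplied by the inductive hypothesis ($R_F$ is one of the two components of $G(F)-X_F$), but it should be invoked explicitly, since without it a vertex of $R_F$ projecting to $w$ could a priori be isolated even when $|R_F|\ge 2$.
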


A set of vertices in a graph is {\it independent} if no two of its elements are adjacent. Since there are no triangles in a $d$-cube,  \cref{prop:cube-cutsets} gives at once the following corollary.
 
 \begin{corollary}\label{cor:separator-independent}  A separator of cardinality $d$ in a $d$-cube is an independent set.
 \end{corollary}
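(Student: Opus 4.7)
The proof plan is essentially a one-line deduction that chains \cref{prop:cube-cutsets} with the triangle-freeness of the cube. First I would invoke \cref{prop:cube-cutsets} to conclude that any separator $X$ of $Q_{d}$ of cardinality $d$ must consist of the $d$ neighbours of some vertex $v\in V(Q_{d})$. Thus $X=N(v)$, the neighbourhood of $v$.

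Next I would use the fact (already stated in the paragraph just before the corollary) that the $d$-cube contains no triangles. This is immediate from the standard $\{0,1\}^{d}$ realisation: edges join vertices differing in exactly one coordinate, so any cycle has even length. Consequently, no two neighbours of $v$ can themselves be adjacent — otherwise $v$ together with those two neighbours would form a triangle. Hence $X=N(v)$ is an independent set, which is exactly the claim. There is no real obstacle here; the corollary is just the observation that \cref{prop:cube-cutsets} reduces a cardinality-$d$ separator to the open neighbourhood of a vertex, and open neighbourhoods are independent in any triangle-free graph.
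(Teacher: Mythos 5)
Your proposal is correct and is exactly the paper's argument: the paper also derives the corollary "at once" from \cref{prop:cube-cutsets} together with the observation that the $d$-cube is triangle-free, so a cardinality-$d$ separator is the neighbourhood of a vertex and hence independent.
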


\begin{remark}\label{rmk:cubical-common-neighbours} If $x$ and $y$ are vertices of a cube, then they share at most two neighbours. In other words, the complete bipartite graph $K_{2,3}$ is not a subgraph of the cube; in fact, it is not an induced subgraph of any simple polytope \cite[Cor.~1.12(iii)]{PfePilSan12}. 
\end{remark}

\subsection{Linkedness of the $d$-cube}The linkedness of a $d$-cube was first established in \cite[Prop.~4.4]{Mes16} as part of a study of linkedness in Cartesian products of graphs. We give an alternative proof of the result. As discussed before (\cref{prop:4polytopes}), the linkedness of $Q_{4}$ is easily shown to be two.

Since we make heavy use of Menger's theorem \cite[Thm.~3.3.1]{Die05} henceforth, we remind the reader of the theorem and one of one of its consequences.

\begin{theorem}[Menger's theorem, {\cite[Sec.~3.3]{Die05}}]\label{thm:Menger} Let $G$ be a  graph, and let $A$ and $B$ be two subsets of its vertices. Then the minimum number of vertices separating $A$ from $B$ in $G$ equals the maximum number of disjoint $A-B$ paths in $G$. 
\end{theorem}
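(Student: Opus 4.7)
The plan is to prove Menger's theorem by induction on $|E(G)|$. The inequality ``max $\le$ min'' is immediate, since any $A$-$B$ separator must contribute at least one distinct vertex to each member of a family of pairwise vertex-disjoint $A$-$B$ paths. For the reverse inequality, I would assume that every $A$-$B$ separator in $G$ has size at least $k$ and aim to exhibit $k$ pairwise vertex-disjoint $A$-$B$ paths. The base case $|E(G)| = 0$ is immediate: every $A$-$B$ path is a single vertex of $A \cap B$, which itself is a separator, so the two quantities coincide.

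For the inductive step, choose an edge $e = xy$ of $G$ and first examine $G - e$. If every $A$-$B$ separator in $G - e$ also has size at least $k$, the inductive hypothesis applied to $G - e$ yields $k$ disjoint $A$-$B$ paths that remain disjoint in $G$. Otherwise $G - e$ admits an $A$-$B$ separator $S$ of size $k - 1$, in which case both $S \cup \{x\}$ and $S \cup \{y\}$ are minimum $A$-$B$ separators in $G$ (adding back $e$ forces any separator to contain $x$ or $y$). Fixing one of these, call it $X$, I would split $G$ at $X$ into two subgraphs: the ``$A$-side'' $G_A$ is induced on $X$ together with every vertex reachable from $A$ via a path internally disjoint from $X$, and the ``$B$-side'' $G_B$ is defined symmetrically. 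The minimality of $X$ guarantees that every $A$-$X$ separator in $G_A$ has size at least $k$, and likewise for $X$-$B$ separators in $G_B$, because a smaller separator on one side could be concatenated with the opposite half to produce a sub-$k$ separator of $G$. Induction applied to $G_A$ and $G_B$ then produces $k$ disjoint $A$-$X$ paths and $k$ disjoint $X$-$B$ paths, which concatenate along the vertices of $X$ to form the required $k$ disjoint $A$-$B$ paths in $G$.

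The main obstacle is ensuring that the induction genuinely applies to both $G_A$ and $G_B$, i.e.\ that both have strictly fewer edges than $G$. The degenerate case is $X = A$ (or symmetrically $X = B$), where $G_A$ collapses to the induced subgraph on $A$ but $G_B$ could a priori coincide with $G$ itself. The standard resolution is to pick $e$ and $X$ so that $e$ is not simultaneously an edge of both halves --- for instance, by choosing a minimum separator $X$ that minimises the ``$B$-side'' vertex count; this forces at least one of $G_A, G_B$ to be a proper subgraph of $G$, and the remaining degenerate subcase is absorbed into the $G - e$ branch already treated above.
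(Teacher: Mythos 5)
A preliminary remark: the paper offers no proof of this statement --- Menger's theorem is imported verbatim from Diestel's book --- so there is no in-paper argument to compare yours with; I can only judge your proposal on its own terms. Your easy inequality, your base case, the branch in which every $A-B$ separator of $G-e$ still has size at least $k$, and the observation that otherwise $S\cup\{x\}$ and $S\cup\{y\}$ are minimum $A-B$ separators of $G$ are all correct, as is the claim that an $A-X$ separator of $G_A$ smaller than $k$ would be an $A-B$ separator of $G$ (the initial segment of any $A-B$ path up to its first vertex of $X$ lies entirely in the induced subgraph $G_A$).

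The genuine gap is the one you flag yourself and then wave away: for the induction to apply you need \emph{both} $G_A$ and $G_B$ to have strictly fewer edges than $G$, and your remedies do not secure this. Arranging that $e$ lies in only one half makes only \emph{that} half edge-proper; the other half still contains every edge of $G$ exactly when $X$ swallows one of the two sides (essentially $X=A$ or $X=B$, which are always candidates for minimum separators, since $A$ and $B$ each separate $A$ from $B$). Minimising the ``$B$-side'' does not exclude this, and the degenerate subcase cannot be ``absorbed into the $G-e$ branch'', because you enter the splitting step precisely when that branch has already failed, i.e.\ when $G-e$ has a separator of size $k-1$. Already $G=K_2$ with $A$ and $B$ its two endpoints exhibits the breakdown: deleting the edge yields the empty separator, both candidates for $X$ are $A$ or $B$, and one of $G_A,G_B$ is all of $G$, so the induction makes no progress. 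The classical repair (Diestel's first proof of Thm.~3.3.1) is to \emph{contract} $e$ rather than delete it: a small separator of $G/e$ must contain the contracted vertex $v_e$, so $X:=(Y\setminus\{v_e\})\cup\{x,y\}$ is a minimum separator of $G$ containing \emph{both} ends of $e$; one then shows that every $A-X$ separator and every $X-B$ separator \emph{of $G-e$} has size at least $k$ --- an argument that needs both $x$ and $y$ in $X$, and which fails for your $X=S\cup\{x\}$ because an $A-B$ path may first meet $X$ by traversing $e$ into $x$ --- and applies the induction hypothesis twice to $G-e$, never to $G_A$ or $G_B$, so the edge count always drops. As written, your inductive step can fail to terminate, and the proposed fix is not an argument.
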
 

\begin{theorem}[Consequence of Menger's theorem]\label{thm:Menger-consequence} Let $G$ be a $k$-connected graph, and let $A$ and $B$ be two subsets of its vertices, each of cardinality at least $k$. Then there are $k$ disjoint $A-B$ paths in $G$. 
\end{theorem}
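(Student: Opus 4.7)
The plan is to obtain \cref{thm:Menger-consequence} directly from \cref{thm:Menger}. It suffices to establish that every subset $S\subseteq V(G)$ that separates $A$ from $B$ satisfies $|S|\ge k$, for then Menger's theorem immediately provides $k$ disjoint $A$-$B$ paths.

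Suppose for contradiction that such an $S$ has $|S|\le k-1$. Since $|A|\ge k>|S|$ and $|B|\ge k>|S|$, we can pick vertices $u\in A\setminus S$ and $v\in B\setminus S$. In the case $u=v$, the single-vertex path consisting of $u$ is itself an $A$-$B$ path in $G-S$, contradicting the separating property of $S$. In the case $u\ne v$, every $u$-$v$ path in $G$ is an $A$-$B$ path and must therefore meet $S$; hence $S$ is a separator of the two distinct vertices $u,v\notin S$, and the $k$-connectedness of $G$ forces $|S|\ge k$, again contradicting $|S|\le k-1$.

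Either way we reach a contradiction, so every $A$-$B$ separator of $G$ has cardinality at least $k$, and \cref{thm:Menger} delivers the required $k$ disjoint $A$-$B$ paths. The only delicate point is the trivial-path case $u=v$ (which arises precisely when $A\cap B$ is not entirely contained in $S$); it is handled by the fact that a single vertex of $A\cap B$ lying outside $S$ is itself a valid $A$-$B$ path under the definition adopted in the preliminaries, so no genuine obstacle appears.
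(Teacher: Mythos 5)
The paper itself gives no proof of \cref{thm:Menger-consequence}; it is quoted as a standard consequence of Menger's theorem, and your overall strategy --- bound every $A-B$ separator from below by $k$ and then apply \cref{thm:Menger} --- is exactly the standard route. One step, however, rests on a claim that is false under the paper's definitions: you assert that ``every $u-v$ path in $G$ is an $A-B$ path.'' By the definition in the preliminaries, an $A-B$ path $L=u_{0}\ldots u_{n}$ must satisfy $V(L)\cap A=\{u_{0}\}$ and $V(L)\cap B=\{u_{n}\}$, so a $u-v$ path that meets a further vertex of $A$ or of $B$ in its interior is not an $A-B$ path, and the hypothesis that $S$ separates $A$ from $B$ says nothing directly about such a path. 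The conclusion you want is still true, but it needs the usual one-line repair: if some $u-v$ path $P=u_{0}\ldots u_{n}$ avoided $S$, take $j$ minimal with $u_{j}\in B$ and then $i$ maximal with $i\le j$ and $u_{i}\in A$; the subpath $u_{i}\ldots u_{j}$ is a genuine $A-B$ path avoiding $S$, contradicting the separation hypothesis. Hence $S$ does separate the distinct vertices $u,v\notin S$, and $k$-connectivity forces $|S|\ge k$ as you claim. With that substitution the argument is complete; your treatment of the degenerate case $u=v\in A\cap B$ via the single-vertex path is correct and consistent with the paper's definition of an $A-B$ path.
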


Two vertex-edge paths are {\it independent} if they share no inner vertex. 

\begin{lemma} Let P be a cubical $d$-polytope with $d\ge 4$. Let $X$ be a set of $d+1$  vertices in $P$, all contained in a facet $F$. Let $k:=\floor{(d+1)/2}$. Arbitrarily label and pair $2k$ vertices in $X$ to obtain $Y:=\{\{s_{1},t_{1}\},\ldots,\{s_{k},t_{k}\}\}$. Then, for at least $k-1$ of these pairs $\{s_{i},t_{i}\}$, there is an  $X$-valid  $s_{i}-t_{i}$ path in $F$.   
\label{lem:short-distance} 
\end{lemma}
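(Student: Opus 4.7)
The plan is to show that at most one of the pairs $\{s_i,t_i\}$ can fail to admit an $X$-valid $s_i$-$t_i$ path in $F$. I will recast the existence of such a path in the language of separators and then apply the rigid description of minimum separators in a cube given by \cref{prop:cube-cutsets}.

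First I would observe that an $X$-valid $s_i$-$t_i$ path in $F$ exists if and only if the set $X\setminus\{s_i,t_i\}$, of cardinality $d-1$, fails to separate $s_i$ from $t_i$ in $G(F)$. Since $F$ is combinatorially a $(d-1)$-cube, $G(F)$ is $(d-1)$-connected by Balinski's theorem (\cref{thm:Balinski}); so if $X\setminus\{s_i,t_i\}$ does separate these two vertices, it is a minimum separator of $G(F)$. Applying \cref{prop:cube-cutsets} to $F=Q_{d-1}$, such a separator must coincide with the set of $d-1$ neighbours of some vertex $v$ of $F$, and $G(F)$ minus the separator consists of two components, $\{v\}$ and its complement. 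For $s_i$ and $t_i$ to lie in different components, one of them must equal $v$; relabelling within the pair, $s_i=v$, so $N_F(s_i)=X\setminus\{s_i,t_i\}$ and $t_i\notin N_F(s_i)$. Call such a pair \emph{bad}.

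Next I would show that at most one pair is bad. Suppose that both $\{s_1,t_1\}$ and $\{s_2,t_2\}$ are bad; after the relabelling above, $N_F(s_j)=X\setminus\{s_j,t_j\}$ for $j=1,2$. Because all terminals are distinct, $s_2\in X\setminus\{s_1,t_1\}=N_F(s_1)$, so $s_1s_2$ is an edge of $F$. The common neighbours of $s_1$ and $s_2$ in $F$ are then exactly
\[
N_F(s_1)\cap N_F(s_2)=X\setminus\{s_1,t_1,s_2,t_2\},
\]
a set of $d-3$ vertices, which is nonempty for $d\ge 4$. Any such common neighbour would complete a triangle with $s_1$ and $s_2$, contradicting the absence of triangles in the cube (a fact already used for \cref{cor:separator-independent}). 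Hence at most one pair is bad, and so at least $k-1$ pairs admit an $X$-valid $s_i$-$t_i$ path in $F$.

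The main obstacle is locating separators of $G(F)$ whose vertices lie entirely in $X$; but the equality $|X\setminus\{s_i,t_i\}|=d-1$ forces any such separator to be a vertex-neighbourhood by \cref{prop:cube-cutsets}, leaving very little flexibility. After that the argument reduces to the no-triangle property of the cube. A minor technicality to watch is the relabelling within each pair when identifying the isolated vertex $v$; since the roles of $s_i$ and $t_i$ inside the pair are symmetric, this causes no loss of generality.
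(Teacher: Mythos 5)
Your proposal is correct and follows essentially the same route as the paper's proof: reformulate the failure of an $X$-valid path as $X\setminus\{s_i,t_i\}$ being a minimum separator of the $(d-1)$-connected graph of $F$, invoke \cref{prop:cube-cutsets} to identify it with the neighbourhood of $s_i$ (up to relabelling within the pair), and rule out two bad pairs via the triangle-freeness of the cube using a vertex of $X\setminus\{s_1,t_1,s_2,t_2\}$, which is nonempty since $|X|=d+1\ge 5$.
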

 
\begin{proof} If, for each pair in $Y$ there is an  $X$-valid path in $F$ connecting the pair, we are done. So assume there is a pair in $Y$, say $\{s_{1},t_{1}\}$, for which an $X$-valid $s_{1}-t_{1}$ path does not exist in $F$. Since $F$ is $(d-1)$-connected, there are $d-1$ independent $s_{1}-t_{1}$ paths (\cref{thm:Menger-consequence}), each containing a vertex from $X\setminus \{s_{1},t_{1}\}$; that is, the set $X\setminus \{s_{1},t_{1}\}$, with cardinality $d-1$, separates $s_{1}$ from $t_{1}$ in $F$. By \cref{prop:cube-cutsets}, the vertices in $X\setminus \{s_{1},t_{1}\}$ are the neighbours of $s_{1}$ or $t_{1}$ in $F$, say of $s_{1}$. 

Take any pair in $Y \setminus \{\{s_{1},t_{1}\}\}$, say $\{s_{2},t_{2}\}$. If there was no $X$-valid $s_{2}-t_{2}$ path in $F$, then,  by \cref{prop:cube-cutsets}, the set $X\setminus \{s_{2},t_{2}\}$ would separate $s_{2}$ from $t_{2}$ and would consist of the neighbours of $s_{2}$ or $t_{2}$ in $F$, say of $s_{2}$. But in this case, a vertex $x$ in $X\setminus \{s_{1},s_{2},t_{1},t_{2}\}$, which exists since $|X|\ge 5$, would form a triangle with $s_{1}$ and $s_{2}$, a contradiction. See also \cref{cor:separator-independent}. Since our choice of $\{s_{2},t_{2}\}$ was arbitrary, we must have an $X$-valid path in $F$ between any pair $\{s_{i},t_{i}\}$ for $i\in [2,k]$.          
\end{proof}

 For a set $Y:=\{\{s_{1},t_{1}\},\ldots,\{s_{k},t_{k}\}\}$ of pairs of vertices in a graph, a {\it $Y$-linkage} $\{L_{1},\ldots,L_{k}\}$ is a set of disjoint paths with the path $L_{i}$ joining the pair $\{s_{i},t_{i}\}$ for $i\in [1,k]$.  For a path $L:=u_{0}\ldots u_{n}$ we often write $u_{i}Lu_{j}$ for $0\le i\le j\le n$  to denote the subpath $u_{i}\ldots u_{j}$. We are now ready to prove \cref{thm:cube}.

The definition of $k$-linkedness gives the following lemma at once.

\begin{lemma}\label{lem:k-linked-def} Let $\ell\le k$. Let $X$ be a set of $2\ell$ distinct vertices of a $k$-linked graph $G$,  let $Y$ be a labelling and pairing of the vertices in $X$, and let $Z$ be a set of $2k-2\ell$ vertices in $G$ such that $X\cap Z=\emptyset$. Then there exists a $Y$-linkage in $G$ that avoids every vertex in $Z$.  
\end{lemma}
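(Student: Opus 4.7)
The proof is essentially a packaging trick: extend the given pairing $Y$ to a pairing of $2k$ vertices by using the $2k-2\ell$ vertices of $Z$ as the extra terminals, and then invoke the $k$-linkedness of $G$ directly.

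More concretely, my plan is as follows. First, since $|Z|=2k-2\ell$ is even, I would partition $Z$ arbitrarily into $k-\ell$ unordered pairs $\{\{s_{\ell+1},t_{\ell+1}\},\ldots,\{s_{k},t_{k}\}\}$. Setting $Y':=Y\cup \{\{s_{\ell+1},t_{\ell+1}\},\ldots,\{s_{k},t_{k}\}\}$, the hypothesis $X\cap Z=\emptyset$ guarantees that $Y'$ is a labelling and pairing of $2k$ distinct vertices of $G$. By the $k$-linkedness of $G$, there exists a $Y'$-linkage $\{L_{1},\ldots,L_{k}\}$, namely a family of pairwise vertex-disjoint paths with $L_{i}$ joining $s_{i}$ to $t_{i}$ for $i\in[1,k]$.

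Now I would observe that the subfamily $\{L_{1},\ldots,L_{\ell}\}$ is a $Y$-linkage. Moreover, for each $i\in[1,\ell]$, the path $L_{i}$ is vertex-disjoint from each $L_{j}$ with $j\in[\ell+1,k]$. Since every vertex of $Z$ is an endpoint of some $L_{j}$ with $j\in[\ell+1,k]$, no vertex of $Z$ can lie on $L_{i}$. Hence $\{L_{1},\ldots,L_{\ell}\}$ is a $Y$-linkage in $G$ that avoids every vertex of $Z$, as required.

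There is no real obstacle here: the argument is just the standard observation that $k$-linkedness ``absorbs'' forbidden vertices of even cardinality $\le 2k-2\ell$ by reinterpreting them as additional paired terminals. The only point to be careful about is that $X\cap Z=\emptyset$ is needed to ensure that the augmented pairing $Y'$ consists of $2k$ \emph{distinct} vertices, so that the hypothesis of $k$-linkedness is applicable.
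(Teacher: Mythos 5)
Your proof is correct and is precisely the standard argument the paper has in mind when it states that the lemma follows ``at once'' from the definition of $k$-linkedness (the paper omits the proof entirely). Absorbing the $2k-2\ell$ forbidden vertices of $Z$ as extra paired terminals and discarding the corresponding paths is exactly the intended reasoning, and your remark that $X\cap Z=\emptyset$ is what guarantees $2k$ distinct terminals is the right point of care.
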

 
\begin{theorem}[{Linkedness of the cube}]\label{thm:cube}  For every $d\ne 3$, a $d$-cube is $\floor{(d+1)/2}$-linked.
\end{theorem}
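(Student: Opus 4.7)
The plan is to argue by induction on $d$. The cases $d\in\{1,2\}$ are trivial, and $d=4$ is \cref{prop:4polytopes}. For the inductive step with $d\ge 5$, I would exploit the projection $\pi$ from \cref{def:projection} to reduce the problem to the linkedness of an opposite facet $F^{o}\cong Q_{d-1}$, using the counting supplied by \cref{lem:facets-association}.

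Let $X$ be the set of $2k$ terminals with pairing $Y$. For the even case $d=2k$, \cref{lem:facets-association} gives at most $2k-1=d-1$ associated pairs of opposite facets out of $d$ in total, so a non-associated pair $\{F,F^{o}\}$ exists. I would then replace every $z\in X\cap V(F)$ by its projection $\pi(z)\in V(F^{o})\setminus X$; since $\pi$ restricted to $V(F)$ is a bijection onto $V(F^{o})$ and the pair is non-associated, the resulting set $X'\subseteq V(F^{o})$ has exactly $2k$ distinct vertices and inherits a pairing $Y'$ from $Y$. The inductive hypothesis makes $F^{o}$ a $\lfloor d/2\rfloor=k$-linked graph, yielding a $Y'$-linkage $\{L'_{1},\ldots,L'_{k}\}$ in $F^{o}$; prepending, and where necessary appending, the single edges $z\pi(z)$ for each replaced terminal converts this into a $Y$-linkage in $Q_{d}$, with disjointness inherited from disjointness of the $L'_{i}$ and injectivity of $\pi$ on $V(F)$.

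The main obstacle is the odd case $d=2k-1$. Here \cref{lem:facets-association} permits every pair of opposite facets to be associated with $X$, and the inductive hypothesis only provides $(k-1)$-linkedness of $F^{o}\cong Q_{d-1}$, which is insufficient to link all $k$ pairs inside $F^{o}$ directly. My plan is to sacrifice one pair of $Y$, say $\{s_{1},t_{1}\}$, routing it externally by an application of \cref{thm:Menger-consequence} together with the strong connectedness of the antistar of an appropriate face supplied by \cref{lem:cube-face-complex} and \cref{prop:connected-complex-connectivity}; then the remaining $k-1$ pairs are handled inside $F^{o}$ by invoking \cref{lem:k-linked-def} on the inductive $(k-1)$-linkedness of $F^{o}$ to avoid the bounded trace left by the sacrificed path. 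In the extremal situation when every pair of opposite facets is associated with $X$, \cref{lem:facets-association} is tight, forcing $G(Q_{d})[X]$ to be a spanning tree on $X$ whose $d$ edges hit every direction of the cube exactly once; \cref{cor:separator-independent} and \cref{rmk:cubical-common-neighbours} then sharply restrict how such a tree can sit inside $Q_{d}$, so that this extremal configuration should reduce to a short direct construction rather than requiring a further layer of induction.
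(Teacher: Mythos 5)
Your even case is correct and clean: for $d=2k$ you have $|X|=d$, so \cref{lem:facets-association} guarantees a non-associated pair $\{F,F^{o}\}$, the projection argument is valid (non-association prevents $\pi(z)$ from colliding with a terminal, and injectivity of $\pi$ on $V(F)$ prevents collisions among projections), and $F^{o}\cong Q_{2k-1}$ is $k$-linked by induction. This is a tidier route than the paper takes for even $d$, which does not split by parity.

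The odd case, however, contains a genuine gap, and it is exactly where the real difficulty of the theorem lives. With $d=2k-1$ you propose to link the remaining $k-1$ pairs inside $F^{o}\cong Q_{2k-2}$ and to use \cref{lem:k-linked-def} to make those paths avoid the trace of the sacrificed path $L_{1}$. But $F^{o}$ is only $(k-1)$-linked by induction, and \cref{lem:k-linked-def} with $\ell=k-1$ pairs in a $(k-1)$-linked graph allows you to avoid a set $Z$ of size $2(k-1)-2(k-1)=0$: the avoidance budget is empty. So you cannot force the $k-1$ paths in $F^{o}$ to miss even a single prescribed vertex, and whenever $L_{1}$ must enter $F^{o}$ (for instance when $s_{1}\in V(F)$ and $t_{1}\in V(F^{o})$, or when no $X$-valid $s_{1}$--$t_{1}$ path exists inside $F$), there is no way to guarantee disjointness. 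The paper closes precisely this budget problem by first linking several pairs cheaply before ever invoking the linkedness of $F^{o}$: pairs joined by an edge in $F$, pairs that are projections of one another, and pairs joined by an $X$-valid path of length two across the direction $\{F,F^{o}\}$ (the sets $X_{0},X_{1},X_{3}$), together with a system of pairwise disjoint connector paths $M_{x}$ of length at most two whose existence requires its own inductive injection argument; only then does the count $|X'|+|X_{1}|+|\pi_{F^{o}}(X_{1})|+|X_{3}|+|Y_{3}|\le 2(k-1)$ leave enough slack for \cref{lem:k-linked-def} to apply. Your closing remark about the extremal situation (all $d$ directions associated, $G[X]$ containing a spanning tree with one edge per direction) is true but is not the relevant dichotomy: the hard configurations are not the extremal ones but the generic "some pair lies in a facet yet not all terminals do" case, and asserting that the extremal case "should reduce to a short direct construction" is not an argument. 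Note also that the paper's separate antipodal scenario (every pair at distance $d$) is handled by applying \cref{lem:facets-association} to $X\setminus\{s_{1}\}$ and splitting the pairs two-and-$(k-2)$ between $F^{o}$ and $F$, which is quite different from routing one pair through an antistar.
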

\begin{proof} The cases of $d=1,2$ are trivially true. For the remaining values of $d$, we proceed by induction, with $d=4$ given by \cref{prop:4polytopes}. 
 
Let $k:=\floor{(d+1)/2}$,  then $2k-1\le d$.  Let $X$ be any set of $2k$ vertices, our terminals, in the graph of the  $d$-cube $Q_{d}$ and let $Y:=\{\{s_{1},t_{1}\},\ldots,\{s_{k},t_{k}\}\}$ be a pairing and labelling of the vertices of $X$. We aim to find a $Y$-linkage $\{L_{1},\ldots,L_{k}\}$ with $L_{i}$ joining the pair $\{s_{i},t_{i}\}$ for $i=1,\ldots,k$. For a facet $F$ of $Q_{d}$, let $F^{o}$ denote the facet opposite to $F$. 

We consider three scenarios: (1)  all the pairs in $Y$ lie in some facet of $Q_{d}$, (2)  a pair  of $Y$ lies in some facet $F$ of $Q_{d}$ but not every vertex of $X$ is in $F$, and (3) no pair of $Y$ lies in a facet of $Q_{d}$, which amounts to saying that every pair in $Y$ is at distance $d$ in $Q_{d}$. For the sake of readability, each scenario is highlighted in bold.

{\bf In the first scenario every vertex in $X$ lies in some facet $F$ of $Q_{d}$}. Hence \cref{lem:short-distance} gives an $X$-valid path $L_{1}$ in $F$ joining a pair in $Y$, say $\{s_{1},t_{1}\}$. The projection in $Q_{d}$ of every vertex in $(X\setminus \{s_{1},t_{1}\})\cap V(F)$ onto $F^o$ is not in $X$. Define $Y^{p}:=\{\{s_{2}^{p},t_{2}^{p}\},\ldots,\{s^{p}_{k},t^{p}_{k}\}\}$ as the set of $k-1$ pairs of projections of the corresponding vertices in $Y\setminus\left\{\{s_{1},t_{1}\}\right\}$ onto $F^{o}$. By the induction hypothesis on $F^o$, there is a $Y^{p}$-linkage $\{L_{2}^{p},\ldots, L_{k}^{p}\}$ with $L_{i}^{p}:=s_{i}^{p}-t_{i}^{p}$ for $i\in [2,k]$. Since $V(F^{o})$ is disjoint from $V(L_{1})\cup X$, each path $L_{i}^{p}$ can be extended with $s_{i}$ and $t_{i}$ to obtain a path $L_{i}:=s_{i}-t_{i}$ for $i\in [2,k]$. And together, all the paths $\{L_{1},\ldots, L_{k}\}$ give the desired $Y$-linkage in the cube. 

{\bf In the second scenario a pair  of $Y$, say $\{s_{1},t_{1}\}$,  lies in some facet $F$ of $Q_{d}$ but not every vertex in $X$ is in $F$.}  Let $N_{K}(x)$ denote the set of neighbours of a vertex $x$ in a face $K$ of the cube and let $N(x)$ denote the set of all the neighbours of $x$ in the cube. 

In what follows, whenever $x\in X$ we let $\{x, y\}\in Y$. Let $X_{F}:=(X\setminus \{s_{1},t_{1}\})\cap V(F)$, and partition $X_{F}$ as follows.
\begin{align*}
X_{0}&:=\{x\in X_{F}: \text{$\{x,y\}\in Y$ and $y\in N_{F}(x)$}\}\\ 
X_{1}&:=\{x\in X_{F}\setminus X_{0}: \text{$\{x,x^{p}_{F^{o}}\}\in Y$}\}\\ 
X_{2}&:=\{x\in X_{F}\setminus (X_{0}\cup X_{1}): \text{$x^{p}_{F^{o}}\not \in X$}\}\\ 
X_{3}&:=\{x\in X_{F}\setminus (X_{0}\cup X_{1}\cup X_{2}): \text{$x^{p}_{F^{o}} \in X$, and for $\{x,y\}\in Y$ and $y \in V(F^{o})$} \\ 
&\hspace{4cm}\text{there is a unique $X$-valid path $xy^{p}_{F}y$} \}\\
X_{4}&:=X_{F}\setminus(X_{0}\cup X_{1}\cup X_{2}\cup X_{3})  
\end{align*}  

Let $L_{i}:=xy$ if $x\in X_{0}\cup X_{1}$ and, $x=s_{i}$ or $x=t_{i}$, and let $L_{i}:=x\pi_{F(y)}y$ if $x\in X_{3}$ and, $x=s_{i}$ or $x=t_{i}$. 

\begin{claim}\label{claim:xvalid-path-existence} Let $X_4'\subseteq X_4$. For every vertex $x$ in $X_{2}\cup X_{3}\cup X'_{4}$, there is an $X$-valid path $M_{x}$ of length at most two from $x$ to $F^o$ such that  $(V(M_{x})\cap X)\subseteq\{x,y\}$ and the $|X_{2}\cup X_{3}\cup X'_{4}|$  paths $M_{x}$  are pairwise disjoint. 
\end{claim}

\begin{claimproof} We prove this claim by induction on the cardinality $|X'_{4}| = \ell$ of $X'_{4}$.

\textbf{In the base case $\ell=0$}, for $x\in X_{2}$, let $M_{x}=xx^{p}_{F^{o}}$. For $x\in X_{3}$, let $M_{x}$ be the unique $X$-valid path $xy^{p}_{F}y$ with $\{x,y\}\in Y$ and $y \in V(F^{o})$. It is clear that these paths are pairwise disjoint and $X$-valid.

Now suppose that the claim is true for any subset of $X_4$ of cardinality $\ell-1$. Pick a vertex $x\in X'_4$ and let $X''_4 = X'_4\setminus \{x\}$.  By the induction hypothesis, there exist $X$-valid and pairwise disjoint paths $M_z$ of length at most two from $z$ to $F^{o}$ for $z\in X_{2}\cup X_{3}\cup X''_{4}$. To prove that the claim is true for $X'_4$, we only need to construct an $X$-valid path $M_x$ disjoint from all these paths $M_z$ previously defined. We will construct it as  $xw_x\pi_{F^{o}}(w_x)$ for some $w_{x}\in N_{F(x)}$. Define $$O_{x} = \bigcup_{z\in X_{2}\cup X_{3}\cup X''_{4}} \left(M_z \cap N_F(x)\right)\; \bigcup \left(X\cap N_F(x)\right).$$
The set $O_{x}$ represents the set of  vertices in $N_{F}(x)$ that cannot be chosen as $w_{x}\in N_{F}(x)$ in the path $M_{x}$. In other words, if $N_F(x)\setminus O_{x}\ne \emptyset$ then the claim is true for $X'_4$.
 
Excluding the path $xx^{p}_{F^{o}}$, there are exactly $d-1$ disjoint paths of length two in $Q_{d}$ between $x$ and $F^{o}$, each going through an element of $N_{F}(x)$. Thus, to show that there is  a  suitable vertex $w_{x}\in N_{F}(x)$, it suffices to show an injection between $O_{x}$ and $X\setminus \{x,x^{p}_{F^{o}},y\}$, which would imply $|O_{x}|\le d-2$.  Observe that $y,x^{p}_{F^{o}}\in X\setminus O_{x}$ and $y\ne x^{p}_{F^{o}}$. 
 
For every vertex $z\in O_{x}\cap X$, map $z$ to $z$. For every $v\in O_{x}\setminus X$ with $v^{p}_{F^{o}}\in X$, map $v$ to $v^{p}_{F^{o}}$; note that $v^{p}_{F^{o}}\ne y$, since $x\not\in X_{3}$. For a vertex $w_{u}\in O_{x}\setminus X$ with $\pi_{F_{o}}(w_{u})\not \in X$ there exists a unique vertex $u\in X''_{4}\setminus O_{x}$ such that $w_{u}$ is the {\it unique} vertex in $N_{F}(x)$ on the path $M_{u}$.  Since $u\in  X_{4}$, it follows that $u^{p}_{F^{o}}\in X$. In this case, map $w_{u}$ to $u$ if $u\ne y$, otherwise map $w_{u}$ to $u^{p}_{F^{o}}$. Note that $u\not \in O_{x}$; otherwise the vertices $u$, $x$ and $w_{u}$ would all be pairwise neighbours but there are no triangles in $Q_{d}$.  See \cref{fig:Aux-Cube-Linked-Thm}(a)-(b) for a depiction of the different types of neighbours of the vertex $x\in X_{4}$ and the injection from $O_{x}$ to $X\setminus \{x,x^{p}_{F^{o}},y\}$. 
  
\begin{figure}     
\includegraphics{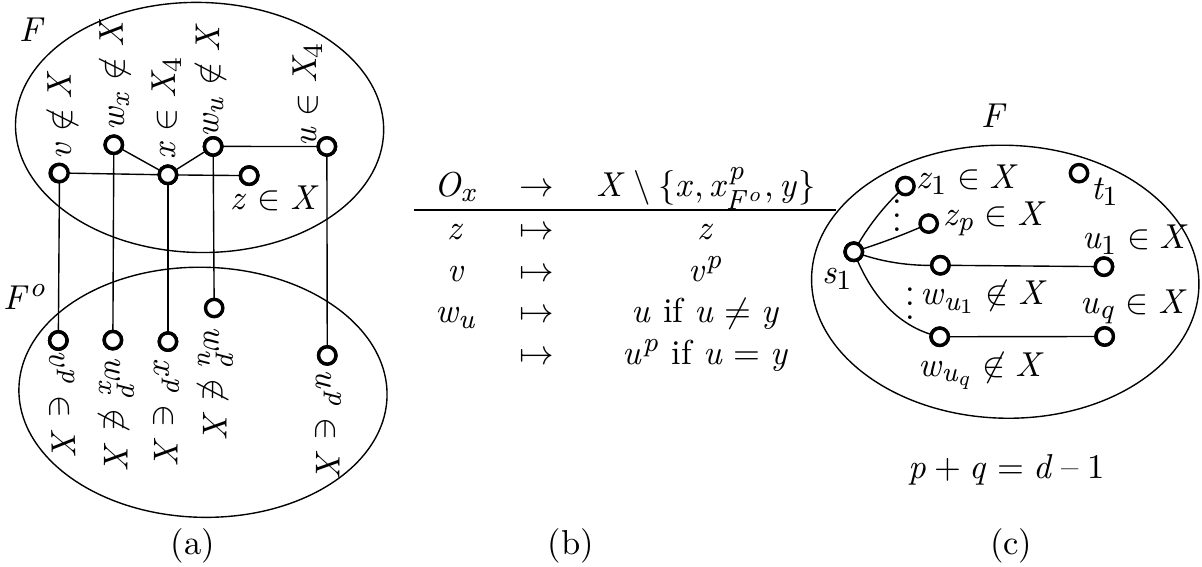}
\caption{Auxiliary figure for the second scenario \cref{thm:cube}. {\bf (a)} Types of neighbours of a vertex $x\in X_{4}$ for finding the path $M_{x}$. {\bf (b)} An injective function from $O_{x}$ to $X\setminus \{x,x^{p}_{F^{o}}y\}$. {\bf (c)} A configuration where no path $L_{1}:=s_{1}-t_{1}$ exists in $F$.}\label{fig:Aux-Cube-Linked-Thm} 
\end{figure}

The existence of an injection from $O_{x}$ to $X\setminus \{x,x^{p}_{F^{o}},y\}$ shows the existence of the vertex $w_{x}\in N_{F}(x)$, and therefore, of the desired path $M_x = xw_{x}\pi_{F^{o}}(w_x)$. This concludes the proof of the claim.\end{claimproof}
 
We now finalise this second scenario. Let $Y_{3}$ be the set of vertices $\left\{y\right\}=M_{x}\cap V(F^{o})$ for $x\in X_{3}$. Then $Y_{3}\subseteq X\cap V(F^{o})$ and $|X_{3}|=|Y_{3}|$. The pairs of terminals in $X_{0}\cup X_{1}\cup \pi_{F^{o}}(X_{1})\cup X_{3}\cup Y_{3}$ are already linked by $X$-valid paths $L_{i}$. We link the remaining pairs in $Y$ thereafter.

Applying Claim~\ref{claim:xvalid-path-existence} to $X_4$, we get the paths $M_{x}$ from all the terminals in $(X\cap V(F))\setminus (X_{0}\cup X_{1}\cup \{s_{1},t_{1}\})$ to $F^{o}$. 
For every vertex $x\in (X\cap V(F^{o})\setminus (Y_{3}\cup \pi_{F^{o}}(X_{1}))$, let $M_{x}:=x$. In this way, the paths $M_{x}$ have been defined for every vertex $x$ in $X\setminus (X_{0}\cup X_{1}\cup \pi_{F^{o}}(X_{1})\cup Y_{3}\cup \{s_{1},t_{1}\})$.  Denote by $X'$ the set of vertices in $M_{x}\cap V(F^o)$ for each $x$ in $X\setminus (X_{0}\cup X_{1}\cup \pi_{F^{o}}(X_{1})\cup X_{3}\cup Y_{3}\cup \{s_{1},t_{1}\})$.  Then

\begin{equation}\tag{*}\label{eq:cube}|X'|+|X_{1}|+|\pi_{F^{o}}(X_{1})|+|X_{3}|+|Y_{3}|\le 2(k-1)\le d-1.\end{equation} Let $Y'$ be the corresponding pairing of the vertices in $X'$: if $\{x,y\}\in Y$ with $x,y\in X\setminus (X_{0}\cup X_{1}\cup \pi_{F^{o}}(X_{1})\cup X_{3}\cup Y_{3}\cup \{s_{1},t_{1}\})$, then the corresponding pair in $Y'$ is $\{M_{x}\cap V(F^{o}),M_{y}\cap V(F^{o})\}$.  
 
The induction hypothesis ensures that $F^{o}$ is $(k-1)$-linked. As a consequence, because of \eqref{eq:cube} there is a $Y'$-linkage that avoids every vertex in $\pi_{F^{o}}(X_{1})\cup Y_{3}$ (\cref{lem:k-linked-def}). The $Y'$-linkage gives the existence of paths $L_{i}^{p}$ in $F^{o}$ between $M_{s_{i}}\cap V(F^{o})$ and $M_{t_{i}}\cap V(F^{o})$ for $s_{i},t_{i} \in X\setminus (X_{0}\cup X_{1}\cup \pi_{F^{o}}(X_{1})\cup X_{3}\cup Y_{3}\cup \{s_{1},t_{1}\})$. Each path $L_{i}^{p}$ is then extended with the paths $M_{s_{i}}$ and $M_{t_{i}}$ to obtain a path $L_{i}:=s_{i}-t_{i}$ for $s_{i},t_{i} \in X\setminus (X_{0}\cup X_{1}\cup \pi_{F^{o}}(X_{1})\cup X_{3}\cup Y_{3}\cup \{s_{1},t_{1}\})$. 
  
   It only remains to show the existence of a path $L_{1}:=s_{1}-t_{1}$ in $F$ pairwise disjoint from the paths $L_{i}$ for $i\in [2,k]$. Suppose that we cannot find a path $L_{1}$ pairwise disjoint from the other paths $L_{i}$ with  $i\in [2,k]$. Then there would be a set $S$ in $V(F)$ separating $s_{1}$ from $t_{1}$. The set $S$ would consist of terminal vertices in $X_{F}$ and nonterminal vertices on some path $M_{x}$ for $x\in X_{3}\cup X_{4}$. Each nonterminal vertex in $S$ amounts to the existence of a terminal vertex in $F^{o}$, namely $x^{p}_{F^{o}}$, since $\pi_{F^{o}}(X_{3}\cup X_{4})\subset X$. Hence, the cardinality of $S$ would be at most $|X_{F}|+|X\cap V(F^{o})|=X\setminus \{s_{1},t_{1}\}|=d-1$.  By the $(d-1)$-connectivity of $F$, the set $S$ would have cardinality $d-1$, which  would imply that every terminal in $X_{F}$ and every nonterminal in $F$ that lies on a path $M_{x}$ for $x\in X_{3}\cup X_{4}$ are in $S$. By \cref{prop:cube-cutsets}, the set $S$ would consist of the neighbours of $s_{1}$ or $t_{1}$, say of $s_{1}$. In this configuration all the vertices of $X$ would be in $F$, which is a contradiction.   Indeed, since there is no edge between any two vertices in $S$ (\cref{cor:separator-independent}), no nonterminal on a path $M_{x}$ is in $S$, and therefore, $S=X_{F}$, or equivalently, $X\subset V(F)$, as desired. The existence of the path $L_{1}$ finally settles the second  scenario. See \cref{fig:Aux-Cube-Linked-Thm}(c).
 
 It is instructive for the reader to convince themself that the proof of the second scenario works well by verifying the existence of the paths $M_{x}$ and the existence of the path $L_{1}$ for the cubes $Q_{4}$ and $Q_{5}$. 
 
{\bf Finally, let us move onto the third and final scenario: every pair in $Y$ is at distance $d$.} From \cref{lem:facets-association} it follows that there exists a pair $\{F,F^o\}$ of opposite facets of $Q_{d}$ that is not associated with $X_{s_{1}}:=X\setminus \{s_{1}\}$, since $|X\setminus \{s_{1}\}|\le d$ and there are $d$  pairs of the form $\{F,F^o\}$. This means that for every $x\in X_{s_{1}}\cap V(F)$, $\pi_{F^o}(x)\not \in X_{s_{1}}$ and that for every $x\in X_{s_{1}}\cap V(F^o)$, $\pi_{F}(x)\not \in X_{s_{1}}$.  Without loss of generality, assume that $s_{1},\ldots,s_{k}\in F^o$ and $t_{1},\ldots,t_{k}\in F$.  We can further assume that $\pi_{F^{o}}(t_{i})\ne s_{1}$ for some $t_{i}\in V(F)$ with $i\in[1,k]$, say $\pi_{F^{o}}(t_{2})\ne s_{1}$. Then  $\pi_{F^{o}}(t_{2})\not\in X$. Let $X':=\{\pi_{F}(s_{3}),\ldots,\pi_{F}(s_{k}),t_{3},\ldots,t_{k}\}$.  

By the induction hypothesis, $F$ is $(k-1)$-linked, and by \cref{lem:k-linked-def}, we can  find $k-2$ disjoint  paths $L_{i}'$ in $F$ between $\pi_{F}(s_{i})$ and $t_{i}$  for $i\in [3,k]$, with each path avoiding $\left\{t_{1},t_{2}\right\}$. Let $L_{i}:=s_{i}\pi_{F}(s_{i})L_{i}'t_{i}$. Now using the $(k-1)$-linkedness of $F^{o}$, find disjoint paths $L_{1}':=s_{1}-\pi_{F^{o}}(t_{1})$ and $L_{2}':=s_{2}-\pi_{F^{o}}(t_{2})$ in $F^{o}$, each avoiding the set $\left\{s_{3},\ldots,s_{k}\right\}$ (\cref{lem:k-linked-def});  there are  $k+2$ vertices in $\left\{\pi_{F^{o}}(t_{1}),\pi_{F^{o}}(t_{2}),s_{1},\ldots,s_{k}\right\}$ and $2k\ge k+2$ for $k\ge 3$. Let $L_{1}:=s_{1}L_{1}'\pi_{F^{o}}(t_{1})t_{1}$ and $L_{2}:=s_{2}L_{2}'\pi_{F^{o}}(t_{2})t_{2}$.

The proof of the theorem is now complete.
 \end{proof}

We are now in a position to answer Wotzlaw's question (\cite[Question 5.4.12]{Ron09}). We continue with a simple lemma from \cite[Sec.~3]{WerWot11}.
 
\begin{lemma}[{\cite[Sec.~3]{WerWot11}}]\label{lem:k-linked-subgraph} Let $G$ be a $2k$-connected graph and let $G'$ be a $k$-linked subgraph of $G$. Then $G$ is $k$-linked.
\end{lemma}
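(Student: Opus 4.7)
The plan is to show that, given an arbitrary set $X=\{s_1,t_1,\ldots,s_k,t_k\}$ of $2k$ terminals in $G$ with an arbitrary pairing $Y=\{\{s_i,t_i\}:i\in[1,k]\}$, we can realise the linkage by first routing each terminal into the $k$-linked subgraph $G'$ using the $2k$-connectivity of $G$, then completing the linkage inside $G'$.

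First, since $G'$ is $k$-linked, $|V(G')|\ge 2k$. Because $G$ is $2k$-connected and both $X$ and $V(G')$ have cardinality at least $2k$, the consequence of Menger's theorem (\cref{thm:Menger-consequence}) yields $2k$ pairwise disjoint $X$--$V(G')$ paths $P_1,\ldots,P_{2k}$ in $G$. By the definition of an $A$--$B$ path given at the start of the paper, each $P_j$ has exactly one endpoint in $X$ and exactly one endpoint in $V(G')$, and no internal vertex lies in $X\cup V(G')$. Since the paths are disjoint and $|X|=2k$, every terminal is the $X$-endpoint of exactly one $P_j$; likewise $2k$ distinct vertices of $G'$ are the $V(G')$-endpoints. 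Write $s_i'$ (respectively $t_i'$) for the $V(G')$-endpoint of the path starting at $s_i$ (respectively $t_i$), and let $P_{s_i}$, $P_{t_i}$ denote the corresponding paths.

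Next, the $2k$ vertices $\{s_1',t_1',\ldots,s_k',t_k'\}$ are distinct, so the $k$-linkedness of $G'$ provides a $Y'$-linkage $Q_1,\ldots,Q_k$ with $Q_i$ joining $s_i'$ and $t_i'$ inside $G'$, where $Y':=\{\{s_i',t_i'\}:i\in[1,k]\}$. I then define, for each $i\in[1,k]$, the path
\[
L_i := s_i\,P_{s_i}\,s_i'\,Q_i\,t_i'\,P_{t_i}\,t_i.
\]
To check that $\{L_1,\ldots,L_k\}$ is a $Y$-linkage, I verify disjointness: the $P_j$'s are pairwise disjoint by construction; the $Q_i$'s are pairwise disjoint since $G'$ is $k$-linked; and a $P_j$ can meet a $Q_i$ only at a vertex of $V(G')$, but internal vertices of $P_j$ avoid $V(G')$, so any such intersection consists of the unique $V(G')$-endpoint of $P_j$, which lies on the unique $Q_i$ paired with it through the relabelling. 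Hence the $L_i$ are pairwise disjoint, and $G$ is $k$-linked.

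The only delicate point is the bookkeeping of the intersections between the routing paths $P_j$ and the internal linkage $Q_i$; this is handled cleanly by the fact that $X$--$V(G')$ paths, as defined in the paper, have internal vertices disjoint from both $X$ and $V(G')$. No further ingredients beyond \cref{thm:Menger-consequence} and the hypotheses on $G$ and $G'$ are needed.
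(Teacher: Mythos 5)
Your proof is correct. The paper states \cref{lem:k-linked-subgraph} with a citation to Werner and Wotzlaw and gives no proof of its own, but your argument---routing the $2k$ terminals into $G'$ via \cref{thm:Menger-consequence} and completing the linkage inside $G'$---is the standard one from the cited source and is the same routing-then-linking pattern the paper itself uses elsewhere (for instance in the first step of the proof of \cref{thm:cubical}), with the key disjointness point (internal vertices of the routing paths avoid both $X$ and $V(G')$, so each routing path meets the internal linkage only at its designated endpoint) correctly handled.
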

	
Now \cref{thm:cube} in conjunction with \cref{lem:k-linked-subgraph} gives the answer.

\begin{proposition} 
\label{prop:weak-linkedness-cubical} For every $d\ge 1$, a  cubical $d$-polytope is $\floor{d/2}$-linked.  
\end{proposition}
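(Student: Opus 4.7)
The plan is to bootstrap from the linkedness of a single facet (which is a cube) to the linkedness of the whole polytope, using \cref{lem:k-linked-subgraph} as the bridge. Set $k := \floor{d/2}$ throughout. The three ingredients we will combine are: (i) Balinski's theorem (\cref{thm:Balinski}), which tells us that $G(P)$ is $d$-connected, and in particular $2k$-connected; (ii) \cref{thm:cube}, which tells us that a facet $F$ of $P$, being combinatorially a $(d-1)$-cube, is $\floor{d/2}$-linked whenever $d-1\ne 3$; and (iii) \cref{lem:k-linked-subgraph}, which promotes linkedness of a subgraph to linkedness of the ambient graph, provided the ambient graph is sufficiently connected.

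The main case is $d\ge 5$. First I would pick any facet $F$ of $P$; since $F$ is a $(d-1)$-cube with $d-1\ge 4$, \cref{thm:cube} gives that $G(F)$ is $\floor{((d-1)+1)/2}=\floor{d/2}=k$-linked. Next, Balinski's theorem gives that $G(P)$ is $d$-connected, hence $2k$-connected. Applying \cref{lem:k-linked-subgraph} with ambient graph $G(P)$ and subgraph $G(F)$ then yields that $G(P)$ is $k$-linked, as desired.

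It remains to handle small dimensions separately, and this is really the only point where care is needed, because \cref{thm:cube} excludes $d=3$. For $d=1$ and $d=2$ the statement is vacuous or trivial (every connected graph is $1$-linked if it has at least two vertices, and a $2$-polytope is a cycle which is $1$-linked). For $d=3$ we need $1$-linkedness, which follows immediately from Balinski's theorem. The delicate value is $d=4$: here $k=2$ and a facet of $P$ is a $3$-cube, which by \cref{cor:nonsimplicial-3polytope} is \emph{not} $2$-linked, so the bootstrapping argument above breaks down. Fortunately the $d=4$ case is covered directly by \cref{prop:4polytopes}, which already asserts that every $4$-polytope (cubical or not) is $2$-linked.

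Since no genuinely new technical step is required beyond assembling the pieces, the expected length of the argument is only a few lines; the main obstacle is simply the bookkeeping around the exceptional dimension $d=4$, which is neatly absorbed by invoking \cref{prop:4polytopes}.
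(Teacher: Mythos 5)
Your proposal is correct and follows essentially the same route as the paper: trivial cases $d=1,2$, Balinski for $d=3$, \cref{prop:4polytopes} for the exceptional dimension $d=4$, and for $d\ge 5$ the combination of \cref{thm:cube} applied to a facet (a $(d-1)$-cube with $d-1\ge 4$) with the $d$-connectivity of $G(P)$ via \cref{lem:k-linked-subgraph}. No differences worth noting.
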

\begin{proof} Let $P$ be a cubical $d$-polytope. The results for $d=1,2$ are trivial. The case of $d=3$ follows from the connectivity of the graph of $P$, while the case of $d=4$ follows from \cref{prop:4polytopes}. For $d\ge 5$, since a facet of $P$ is a $(d-1)$-cube with $d-1\ge 4$,  by \cref{thm:cube} it is $\floor{d/2}$-linked. So \cref{lem:k-linked-subgraph} together with the $d$-connectivity of the graph of $P$ establishes the proposition.
\end{proof} 

We improve \cref{prop:weak-linkedness-cubical} in \cref{thm:cubical}. The latter theorem establishes the maximum possible linkedness of $\floor{(d+1)/2}$ for a cubical $d$-polytope with $d\ne 3$. The rest of the paper is devoted to proving \cref{thm:cubical}.

\subsection{Linkedness inside the cube}
We verify that, for every $d\ne 3$, the link of a vertex in a $(d+1)$-cube, which by \cref{prop:link-polytope} is combinatorially equivalent to a (cubical) $d$-polytope, is $\floor{(d+1)/2}$-linked  (\cref{prop:link-cubical}). In an abuse of terminology, we often think of the link as the corresponding (cubical) $d$-polytope.    
 
\begin{proposition}
\label{prop:link-cubical} For every $d\ne 3$, the link of a vertex in a $(d+1)$-cube $Q_{d+1}$ is $\floor{(d+1)/2}$-linked.  
\end{proposition}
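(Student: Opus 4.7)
Identify the link of $v$ in $Q_{d+1}$ with $L := Q_{d+1} - \{v, v^o\}$ via \cref{rmk:opposite-vertex}, and set $k := \floor{(d+1)/2}$. Small dimensions are handled directly: for $d = 2$, $L$ is a $6$-cycle and hence $1$-linked; for $d = 4$, $L$ is combinatorially the boundary of a $4$-polytope (\cref{prop:link-polytope}), so $G(L)$ is $2$-linked by \cref{prop:4polytopes} (matching $k = 2$). Thus I concentrate on $d \ge 5$.

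The strategy is to transfer linkedness from the surrounding cube. By \cref{thm:cube}, $Q_{d+1}$ is $\floor{(d+2)/2}$-linked. Fix $2k$ terminals $X \subseteq V(L)$ with pairing $Y$. When $d$ is \emph{even}, $\floor{(d+2)/2} = k + 1$, so $Q_{d+1}$ is $(k+1)$-linked. Applying \cref{lem:k-linked-def} to $Q_{d+1}$ with $\ell = k$ and $Z := \{v, v^o\}$ (of cardinality $2 = 2(k+1) - 2k$) yields a $Y$-linkage in $Q_{d+1}$ missing both $v$ and $v^o$, which is the required linkage in $L$.

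When $d$ is \emph{odd}, $\floor{(d+2)/2} = k$, so $Q_{d+1}$ is only $k$-linked and \cref{lem:k-linked-def} affords no slack to avoid extra vertices. Here I would reprise the three-scenario case analysis of the proof of \cref{thm:cube}, inserting the constraint of avoiding $\{v,v^o\}$ at every step. \cref{lem:facets-association} is the principal selection tool: applied with $Z = X$ (of cardinality $2k = d+1$) it furnishes at least one pair of opposite facets $\{F, F^o\}$ not associated with $X$, since there are $d+1$ such pairs and at most $d$ can be associated. Without loss of generality $v \in F$ and $v^o \in F^o$; projection across $\{F, F^o\}$ then maps $X$ cleanly. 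In the scenario where all of $X$ lies in one facet $F$, I would find an $X$-valid path for one pair inside $F$ via \cref{lem:short-distance} and project the remaining pairs to $F^o$, invoking the inductive linkedness of the $d$-cube $F^o$ together with a local rerouting around $v^o$. In the mixed scenario (some pair but not all terminals in a common facet) and the fully generic scenario (no pair in any common facet), analogous splits across $\{F, F^o\}$ combined with \cref{thm:cube} on smaller cubes allow us to reroute any path that would otherwise traverse $v$ or $v^o$ through one of the $d-1$ alternative neighbours of that vertex.

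The main obstacle is the odd-$d$ case: since $Q_{d+1}$ and each of its facets is exactly $k$-linked, no monolithic application of \cref{lem:k-linked-def} frees $\{v, v^o\}$, and the avoidance must be threaded into the recursive construction at every step. The most delicate subcase I anticipate is when a projection to $F^o$ would naturally force a path through $v^o$; showing that some alternative neighbour of $v^o$ in $F^o$ supplies a reroute disjoint from the other $k-1$ paths, using \cref{thm:cube} inside smaller cubes together with \cref{lem:k-linked-def}, will constitute the bulk of the technical work.
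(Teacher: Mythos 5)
Your even-$d$ argument is correct and is in fact slicker than the paper's: since $\floor{(d+2)/2}=k+1$ when $d$ is even, a single application of \cref{thm:cube} to $Q_{d+1}$ followed by \cref{lem:k-linked-def} with $Z=\{v,v^{o}\}$ disposes of that case in one stroke, whereas the paper treats both parities uniformly. The $d=2$ and $d=4$ base cases are also fine.

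The odd-$d$ case, however, is where all of the difficulty lives, and there you have only a plan, not a proof. You announce that you will ``reprise the three-scenario case analysis of \cref{thm:cube}, inserting the constraint of avoiding $\{v,v^{o}\}$ at every step,'' but the steps you do sketch have concrete problems. First, \cref{lem:short-distance} as stated requires $d+2$ terminals in a facet of a $(d+1)$-polytope, whereas you have only $2k=d+1$, and in any case an $X$-valid path produced that way may still pass through $v$, since $v$ is not a terminal; nothing in your sketch prevents this. Second, you conflate the facet containing all of $X$ (in the first scenario) with the facet $F$ of the non-associated pair supplied by \cref{lem:facets-association}; these need not coincide, and the ``free projection'' property you rely on is only guaranteed for the latter. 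Third, the claim that a path forced through $v$ or $v^{o}$ can always be rerouted ``through one of the $d-1$ alternative neighbours'' disjointly from the other $k-1$ paths is precisely the assertion that needs proof, and you explicitly defer it. For comparison, the paper does not re-run the induction of \cref{thm:cube} at all: it fixes the non-associated pair $\{F,F^{o}\}$ with $v\in F$, $v^{o}\in F^{o}$, splits on $|X\cap V(F)|$, applies the already-established $k$-linkedness of the $d$-cube $F$ as a black box, observes that at most \emph{one} of the $k$ disjoint paths can pass through $v$, and reroutes that single path through $F^{o}$ using only the $d$-connectivity of $F^{o}$ (with a separate small argument when a relevant projection lands on $v^{o}$). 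Until you carry out an argument of this kind, the odd case — and hence the proposition — remains unproved.
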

\begin{proof} 
The proposition trivially holds for the cases of $d=1,2$, so assume $d\ge 4$. 

Let $k:=\floor{(d+1)/2}$. Let $v$ and $v^{o}$ be opposite vertices of $G(Q_{d+1})$; that is, $\dist_{Q_{d+1}}(v,v^{o})=d+1$. Let $X$ be a given set of $2k$ vertices in $\lk(v,Q_{d+1})$ and let $Y:=\{\{s_{1},t_{1}\},\ldots,\{s_{k},t_{k}\}\}$ be an arbitrary pairing of the vertices in $X$. From \cref{rmk:opposite-vertex} it follows that $\lk(v,Q_{d+1})$ is the subcomplex $Q_{d+1}- \{v,v^{o}\}$ of $Q_{d+1}$. We show that $Y$ is linked in $\lk(v,Q_{d+1})$.
 
Since $|X|-1\le d$ and there are $d+1$ pairs of opposite facets in $Q_{d+1}$, from \cref{lem:facets-association} there exists a pair $\{F,F^o\}$ of opposite facets of $Q_{d+1}$ that is not associated with $X$. This means that, for every $x\in X\cap V(F)$, its projection $\pi_{F^o}^{Q_{d+1}}(x)\not \in X$, and that, for every $x\in X\cap V(F^o)$, its projection  $\pi_{F}^{Q_{d+1}}(x)\not \in X$. Henceforth we write $\pi_{F}$ rather than $\pi_{F}^{Q_{d+1}}$. Assume that $v\in F$ and $v^{o}\in F^o$. We consider two cases based on the number of terminals in the facet $F$; for the sake of readability, the second case is in turn decomposed into two subcases highlighted in bold. 

In what follows we implicitly use the $d$-connectivity of $F$ or $F^{o}$. 
 
\begin{case} $|X\cap V(F)|=d+1$.\end{case}

Since $F$ is a $d$-cube, it is $\floor{(d+1)/2}$-linked by \cref{thm:cube}, and hence, we can find $k$ pairwise disjoint paths $L_{1},\ldots,L_{k}$ in $F$ between $s_{i}$ and $t_{i}$ for $i\in[1,k]$. If no path $L_{i}$ passes through $v$, we are done. So suppose one of those paths, say $L_{1}$, passes through $v$; there can be only one such path. If neither the projection of $s_{1}$ onto $F^{o}$ nor the projection of $t_{1}$ onto $F^{o}$ is $v^{o}$, then find a $\pi_{F^{o}}(s_{1})-\pi_{F^{o}}(t_{1})$ path  $\bar L_{1}$ in $F^{o}$ that avoids $v^{o}$. So $L_{1}$ would then become $s_{1}\pi_{F^{o}}(s_{1})\bar L_{1}\pi_{F^{o}}(t_{1})t_{1}$. If the projection of either $s_{1}$ or  $t_{1}$ onto $F^{o}$ is $v^{o}$, say that of $s_{1}$, then, since $\dist_{Q^{d+1}}(v,v^{o})= d+1\ge 5$ and $\dist(s_{1},v)= d\ge 4$, there must be a neighbour $w$ of $s_{1}$ on $L_{1}$ that is different from $v$. Find a $\pi_{F^{o}}(w)-\pi_{F^{o}}(t_{1})$  path $\bar L_{1}$ in $F^o$ that avoids $v^{o}$. So $L_{1}$ would then become $s_{1}w\pi_{F^{o}}(w)\bar L_{1}\pi_{F^{o}}(t_{1})t_{1}$.

By symmetry, the proposition also holds if  $|X\cap V(F^{o})|=d+1$. 

\begin{case} $|X\cap V(F)|\le d$.\end{case}
 
In this case, it is also true that $|X\cap V(F^{o})| \le d$. Let $X^{p}:=\pi_{F}(X)$. The set $X^{p}$ comprises the terminals in $X\cap V(F)$ together with the  projections onto $F$ of the vertices in $X\cap V(F^o)$. Then $|X^{p}|\le d+1$.
 
{\bf First suppose that some vertex in $X\cap V(F^o)$, say $t_{1}$, is adjacent to $v$: $\pi_{F}(t_{1})=v$.}   We must have that either $s_{1}\in F^{o}$ or $s_{1}\in F$.

Suppose $s_{1}\in F^{o}$.  Find an $X$-valid path $L_{1}:=s_{1}-t_{1}$ in $F^{o}$ using the $d$-connectivity of $F^{o}$ as there are at most $d$ terminals in $F^{o}$. Thanks to \cref{thm:cube}, $F$ is $k$-linked, and thus we can find $k-1$ disjoint paths $\bar L_{2},\ldots, \bar L_{k}$ between  $\pi_{F} (s_{i})$ and $\pi_{F}(t_{i})$ for $i\in[2,k]$,  all avoiding $v$ (\cref{lem:k-linked-def}). Each such path $\bar L_{i}$ extends to a path $L_{i}:=s_{i}\pi_{F}(s_{i})\bar L_{i}\pi_{F}(t_{i})t_{i}$, if necessary. So we are done in this scenario and ready to assume $s_{1}\in F$. 

Assume $s_{1}\in F$. The $k$-linkedness of $F$ ensures that in $F$ there are $k$ disjoint paths $M_{1}:=s_{1}-v$ and  $\bar L_{i}:=\pi_{F}( s_{i})-\pi_{F}(t_{i})$ for $i\in [2,k]$. As before, each path $\bar L_{i}$ ($i\in [2,k]$)  extends to a path $L_{i}:=s_{i}-t_{i}$, if necessary. If $v^{o}$ is not the projection of $s_{1}$ onto $F^{o}$, then find an $X$-valid $\pi_{F^{o}}(s_{1})-t_{1}$ path $\bar L_{1}$ in $F^o$ using the $d$-connectivity of $F^{o}$. Then $L_{1}$ would become $s_{1}\pi_{F^{o}}(s_{1})\bar L_{1}t_{1}$, and so  we are also home in this scenario. Otherwise $v^{o}$ is the projection of $s_{1}$ onto $F^{o}$, in which case $\dist_{F}(s_{1},v)=d\ge 4$. There is  a neighbour $w\in V(F)$ of $s_{1}$ on the path $M_{1}$, which is different from $v$; observe that $\pi_{F^{o}}(w)\not \in X$ since $w\not \in X^{p}$. Find an $X$-valid path $\pi_{F^{o}}(w)-t_{1}$ path $\bar L_{1}$  in $F^o$ (here use again  the $d$-connectivity of $F^{o}$). So $L_{1}$ would then become $s_{1}w\pi_{F^{o}}(w)\bar L_{1}t_{1}$.  This settles the subcase of  some vertex in  $X\cap V(F^o)$ being adjacent to $v$.  

{\bf Finally, assume no vertex in $X\cap V(F^o)$ is adjacent to $v$, and by symmetry, that no vertex in $X\cap V(F)$ is adjacent to $v^{o}$}. This subcase is handled similarly to Case 1, with the set $X^{p}\subset V(F)$ playing the role of $X$. Obtain paths $\bar L_{i}:=\pi_{F}(s_{i})-\pi_{F}(t_{i})$ in $F$ for $i\in [1,k]$, thanks to the $k$-linkedness of $F$. If one of the paths $\bar L_{i}$, say $\bar L_{1}$, passes through $v$, then obtain a new path $\bar L_{1}:=\pi_{F^{o}}(s_{1})-\pi_{F^{o}}(t_{1})$ in $F^{o}$ using its  $d$-connectivity. Each path $\bar L_{i}$ ($i\in 1,k$)  extends to a path $L_{i}:=s_{i}-t_{i}$, if necessary.  
This completes the proof of the case and of the proposition.   
\end{proof}

\cref{prop:link-cubical} fails for $d=3$ because of the possible presence of Configuration 3F (\cref{def:Conf-3F}) in the link of a vertex of the $4$-cube. For specific examples of Configuration 3F, consider \cref{fig:link-polytope} (b)-(c) and let $s_{1}, s_{2},t_{1},t_{2}$ be the vertices labelled as $1,2,3,4$, respectively.

 \subsection{Strong linkedness of the cube} 
  
With \cref{prop:4polytopes,lem:short-distance,thm:cube} at hand, it can be verified that 4-polytopes and $d$-cubes  for $d\ne 3$ enjoy a property marginally stronger than linkedness: {\it strong linkedness}.  A $d$-polytope $P$ is {\it strongly $\floor{(d+1)/2}$-linked} if its graph has at least $d+1$ vertices and, for every set $X$ of exactly $d+1$ vertices and every pairing $Y$ with $\floor{(d+1)/2}$ pairs from $X$, the set $Y$ is linked in $G(P)$ and each path joining a pair in $Y$ avoids the vertices in $X$ not being paired in $Y$. For odd $d=2k-1$ the properties of strongly $k$-linkedness and $k$-linkedness coincide, since every vertex in $X$ is paired in $Y$; but they differ for even $d=2k$. \cref{thm:4polytopes-strong-linkedness} shows that  4-polytopes are strongly 2-linked while  \cref{thm:cube-strong-linkedness} shows that $d$-cubes for $d\ne 3$ are strongly $\floor{(d+1)/2}$-linked. 
   
\begin{theorem}[Strong linkedness of 4-polytopes]\label{thm:4polytopes-strong-linkedness}   Every cubical 4-polytope is strongly 2-linked.
\end{theorem}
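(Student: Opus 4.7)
The proof strategy closely follows that of \cref{prop:4polytopes} on $2$-linkedness of $4$-polytopes, adapted to the additional requirement that the unpaired terminal $u$ be avoided. Let $X=\{s_1,t_1,s_2,t_2,u\}$ with pairing $Y=\{\{s_1,t_1\},\{s_2,t_2\}\}$, and let $P$ be a cubical $4$-polytope embedded in $\mathbb{R}^4$. I would choose a linear function $f$ vanishing on a hyperplane $H$ through $s_1,t_1,s_2,t_2$, with sign chosen so that $f(u)\le 0$, and split on whether $H$ supports $P$ or cuts its interior.

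In the supporting case, let $K:=H\cap P$. Since $P$ is cubical and $K$ contains four vertices, $K$ is either a square $2$-face whose vertex set is exactly $\{s_1,t_1,s_2,t_2\}$ or a $3$-cube facet $F$. In both situations the plan is to construct an $X$-valid $s_1$-$t_1$ path inside $K$, using \cref{thm:Balinski} on the $(d{-}1)$-connected facet; when $u\in K$ one must remove three vertices, and by \cref{prop:cube-cutsets} the only obstruction is when these three form the neighbourhood of a single vertex of the cube, a configuration to be handled separately using a detour into an adjacent facet of $P$. The $s_2$-$t_2$ path is then obtained from \cref{lem:linear-paths} on the opposite side of $H$; because inner vertices of this path have strict $f$-sign, they lie outside $K$ and, when $f(u)\le 0$, they automatically avoid $u$ as well. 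In the cutting case, \cref{lem:linear-paths} yields an $s_1$-$t_1$ path on the positive side and an $s_2$-$t_2$ path on the negative side of $H$, both with strict inner signs, making them disjoint; when $f(u)=0$, which is achievable precisely when $X$ is affinely dependent and $H$ is chosen to contain $u$, both paths avoid $u$ by the strict-sign clause of \cref{lem:linear-paths}.

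The main obstacle is the sub-case where the five terminals are affinely independent, so $f(u)<0$ strictly, and the path lying on the $u$-side of $H$ may pass through $u$ (either in the cutting case, or in the supporting case when the $s_2$-$t_2$ path produced by \cref{lem:linear-paths} traverses $u$). The plan is to resolve this via the cubical hypothesis: the antistar of $u$ in $P$ is a strongly connected $3$-complex by \cref{prop:st-ast-connected-complexes}, hence by \cref{prop:connected-complex-connectivity} the graph $G(P)-u$ is $3$-connected; together with the fact that every vertex of a cubical $4$-polytope has degree at least $4$ and with the sharp description of size-$3$ separators of $3$-cubes in \cref{prop:cube-cutsets}, this permits a local rerouting around $u$ through a cube facet containing $u$. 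Without the cubical structure this rerouting may fail, which is why the theorem is restricted to cubical $4$-polytopes.
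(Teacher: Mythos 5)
Your overall framework (a linear functional vanishing on a hyperplane through terminals, \cref{lem:linear-paths} on each side, a separate argument inside the supporting face) is the same as the paper's, but you place the hyperplane through the four \emph{paired} terminals and this is where the argument breaks. The paper's key device is different: it chooses $H$ through $\{s_{1},s_{2},t_{1},x\}$, i.e.\ it spends one of the four defining points on the \emph{unpaired} vertex $x$ and lets $t_{2}$ float with $f(t_{2})\ge 0$. Then $f(x)=0$, so every path produced by \cref{lem:linear-paths} (whose inner vertices have strictly nonzero $f$-value) avoids $x$ automatically, and the path built inside the supporting facet is $X$-valid, hence also avoids $x$. Your version leaves exactly the generic sub-case unresolved: when the five terminals are affinely independent, $f(u)<0$ strictly and the $s_{2}$--$t_{2}$ path may pass through $u$. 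The proposed repair --- rerouting around $u$ using the $3$-connectivity of $G(P)-u$ --- does not obviously work: the detour must avoid not only $u$ but also the entire $s_{1}$--$t_{1}$ path already constructed, and the only mechanism keeping the two paths disjoint is the sign of $f$ on their inner vertices; a rerouting through the antistar of $u$ has no reason to respect that sign, and $3$-connectivity of $G(P)-u$ says nothing about connectivity after further deleting an arbitrary path. This is a genuine gap, not a detail.

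Two smaller problems in the supporting case. First, with your choice of $H$ the face $K=H\cap P$ may be a square $2$-face whose vertices are exactly $s_{1},t_{1},s_{2},t_{2}$; if these appear in the cyclic order $s_{1},s_{2},t_{1},t_{2}$ there is no $X$-valid $s_{1}$--$t_{1}$ path inside $K$ at all, so the plan of routing the first path inside $K$ fails there (the paper never meets this situation because its four defining points include $x$ and only three paired terminals, and it works inside a $3$-connected facet from which only two vertices need to be deleted). Second, when $K$ is a facet containing all five terminals and $\{s_{2},t_{2},u\}$ is the neighbourhood of $s_{1}$ or $t_{1}$, the paper does not ``detour into an adjacent facet'': it invokes \cref{lem:short-distance}, which shows that in that configuration the \emph{other} pair $\{s_{2},t_{2}\}$ admits an $X$-valid path in the facet, and one simply swaps the roles of the two pairs. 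Your unspecified detour would again have to be kept disjoint from the second path, which is the same difficulty as above.
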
 
\begin{proof}  

Let $G$ denote the graph of a 4-polytope $P$ embedded in $\mathbb R^{4}$. Let $X$ be a set of five vertices in $G$.  Arbitrarily pair four vertices of $X$ to obtain $Y:=\{\{s_{1},t_{1}\},\{s_{2},t_{2}\}\}$. Let $x$ be the vertex of $X$ not being paired in $Y$. We aim to find two disjoint paths $L_{1}:=s_{1}-t_{1}$ and  $L_{2}:=s_{2}-t_{2}$ such that each path $L_{i}$ avoids the vertex $x$. The proof is very similar to that of \cref{prop:3-polytopes,prop:4polytopes}. 

Consider a linear function $f$ that vanishes on a linear hyperplane $H$ passing through $\{s_{1},s_{2},t_{1},x\}$.  Assume that $f(y)>0$  for some $y\in P$ and that $f(t_{2})\ge 0$. 

Suppose first that $H$ is  a supporting hyperplane of a facet $F$ of $P$. If $t_{2}\not\in V(F)$, then find an $X$-valid $L_{1}:=s_{1}-t_{1}$ path in $F$ using the 3-connectivity of $F$. Then use \cref{lem:linear-paths}  to find an $X$-valid $s_{2}-t_{2}$ path in which each inner vertex has positive $f$-value. If instead  $t_{2}\in F$, then $X\subset V(F)$ and \cref{lem:short-distance} ensures the existence of an $X$-valid $s_{i}-t_{i}$ path in $F$ for some $i=1,2$, say for $i=1$. Then use \cref{lem:linear-paths} to find an $X$-valid $s_{2}-t_{2}$ path in which each inner vertex has positive $f$-value. So assume  $H$ intersects the interior of $P$. Then there is a vertex in $P$ with $f$-value greater than zero and a vertex with $f$-value less than zero. In this case, use \cref{lem:linear-paths} to find  an $X$-valid $s_{1}-t_{1}$ path in which each inner vertex has negative $f$-value and an $X$-valid $s_{2}-t_{2}$ path in which each inner vertex has positive $f$-value. 
\end{proof}

 Not every 4-polytope is strongly 2-linked. Take a two-fold pyramid $P$ over a quadrangle $Q$. Then $P$ is a 4-polytope on six vertices, say $s_{1},s_{2},t_{1},t_{2},x,y$. Let the sequence $s_{1},s_{2},t_{1},t_{2}$ appears in $Q$ in cyclic order, and let the vertex $x$ be in $V(P)\setminus V(Q)$. To see that $P$ is not strongly 2-linked, observe that, for every two paths $s_{1}-t_{1}$ and $s_{2}-t_{2}$ in $P$, either they intersect or one of them contains $x$.

\begin{theorem}[Strong linkedness of the cube]\label{thm:cube-strong-linkedness}  For every $d\ne 3$, a $d$-cube is strongly $\floor{(d+1)/2}$-linked. 
\end{theorem}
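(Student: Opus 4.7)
My plan is to follow the three-scenario structure from the proof of \cref{thm:cube}, carrying along the unpaired terminal as an extra vertex to avoid. The cases $d\in\{1,2\}$ are immediate; $d=4$ is given by \cref{thm:4polytopes-strong-linkedness} since $Q_4$ is a cubical 4-polytope. For odd $d\ge 5$, strong $\floor{(d+1)/2}$-linkedness coincides with $\floor{(d+1)/2}$-linkedness (because $2\floor{(d+1)/2}=d+1$ exhausts the $d+1$ terminals), so the result reduces to \cref{thm:cube}. The substantive case is $d=2k$ even with $k\ge 3$, where exactly one terminal $x$ among the $2k+1$ vertices of the prescribed set $X$ is unpaired and must be avoided by the linkage.

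The structural observation that drives the argument is that when $d=2k$, each facet of $Q_d$ is a $(d-1)$-cube of odd dimension and is therefore $k$-linked by \cref{thm:cube} -- one linkedness slot more than the analogous step in the proof of \cref{thm:cube} required. Invoked through \cref{lem:k-linked-def}, this extra slot is precisely what allows us to route around $x$ (or its projection onto the opposite facet) when building sub-linkages inside facets.

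In Scenario 1 ($X\subset V(F)$), I will apply \cref{lem:short-distance} to obtain an $X$-valid path $L_1$ in $F$ joining some pair $\{s_1,t_1\}$, project $X\setminus\{s_1,t_1\}$ onto $F^o$, and use the $k$-linkedness of $F^o$ together with \cref{lem:k-linked-def} (taking $\ell=k-1$, so $|Z|$ may be as large as $2$) to link the $k-1$ projected pairs while avoiding $\pi_{F^o}(x)$; each projected path extends back into $Q_d$ via its two matching projection edges, and $L_1$ being $X$-valid ensures it avoids $x$. In Scenario 3 (every pair of $Y$ is opposite in $Q_d$), I apply \cref{lem:facets-association} to $X\setminus\{x\}$ of cardinality $d$ to produce a pair $\{F,F^o\}$ of opposite facets with free projection on $X\setminus\{x\}$; then, assuming WLOG $x\in V(F)$ and $s_i\in V(F^o)$, $t_i\in V(F)$ after relabelling, and arranging $\pi_{F^o}(t_2)\notin X$, I build the linkage from the $k$-linkedness of both $F$ and $F^o$, with \cref{lem:k-linked-def} comfortably absorbing $x$ in $F$ and, if necessary, $\pi_{F^o}(x)$ in $F^o$ because $k\ge 3$ gives ample slack.

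The main obstacle is Scenario 2 (some pair $\{s_1,t_1\}$ lies in a facet $F$ but $X\not\subset V(F)$), where I expect the bulk of the technical work. My approach is to reuse the partition of $(X\setminus\{s_1,t_1\})\cap V(F)$ into $X_0,X_1,X_2,X_3,X_4$ from the proof of \cref{thm:cube}, treating $x$ (when $x\in V(F)$) as a vertex to avoid rather than to join and placing it in whichever class its local adjacency structure dictates. The inductive subclaim producing pairwise-disjoint $X$-valid paths $M_x$ from $X_2\cup X_3\cup X_4$ to $F^o$ should carry over verbatim because its counting injection into $X\setminus\{x,x^p_{F^o},y\}$ is insensitive to whether a particular terminal is paired. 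The delicate step will be re-running the call to \cref{lem:k-linked-def} on $F^o$ so that its extra linkedness slot absorbs $\pi_{F^o}(x)$ or $x$ as appropriate, and then confirming that the concluding separator argument for the existence of $L_1\subset F$ still forces $X\subset V(F)$ in its sole bad configuration. Spelling out this bookkeeping cleanly across all placements of $x$ is where the care is required.
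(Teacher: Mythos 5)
Your reduction to even $d=2k\ge 6$ is correct, and your Scenario 1 and Scenario 3 sketches would likely go through, but the proposal as written has a genuine gap: Scenario 2 --- which you yourself identify as ``the bulk of the technical work'' --- is not carried out, and it is not mere bookkeeping. Two concrete issues. First, the unpaired vertex $x$ cannot simply be ``placed in whichever class its local adjacency structure dictates'': the classes $X_0,\ldots,X_4$ and the paths $M_x$ of the claim are defined in terms of the partner $y$ of each terminal (membership in $X_0$, $X_1$ and $X_3$, and the injection's target $X\setminus\{x,x^{p}_{F^{o}},y\}$, all refer to $y$), and the unpaired vertex has no partner; the correct move is to exclude it from the partition entirely while retaining it in $X$ for $X$-validity, which you do not say. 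Second, the quantitative steps change: with $|X|=d+1$ the application of \cref{lem:k-linked-def} to $F^{o}$ must absorb $x$ or $\pi_{F^{o}}(x)$ on top of $\pi_{F^{o}}(X_{1})\cup Y_{3}$, and the concluding separator count rises to exactly $d-1$, so the full \cref{prop:cube-cutsets} argument is genuinely needed where previously there was slack. All of this can be repaired, but the repair is precisely the content of the proof and is missing.

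More importantly, you have missed the observation that makes the entire case analysis unnecessary, and it is the one the paper uses. Since $|X\setminus\{x\}|=d$, \cref{lem:facets-association} yields a pair $\{F,F^{o}\}$ of opposite facets not associated with $X\setminus\{x\}$; choosing labels so that $x\in V(F^{o})$, the projection $\pi_{F}$ is injective on $X\setminus\{x\}$, giving $2k$ distinct vertices in the $(2k-1)$-cube $F$, which is $k$-linked by \cref{thm:cube}. Link the $k$ projected pairs inside $F$ and append the single projection edges $s_{i}\pi_{F}(s_{i})$ and $t_{i}\pi_{F}(t_{i})$ where needed; every resulting path avoids $x$ because $x$ lies in $F^{o}$ and is not a terminal of $Y$. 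This one-paragraph argument, together with \cref{thm:4polytopes-strong-linkedness} for $d=4$, is the paper's entire proof: no induction over the three scenarios of \cref{thm:cube} is required.
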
 
	
\begin{proof} It suffices to prove the result for $d=2k$.  Let $X$ be a set of $d+1$ vertices in the $d$-cube for $d\ne 3$. Arbitrarily pair $2k$ vertices in $X$ to obtain $Y:=\{\{s_{1},t_{1}\},\ldots,\{s_{k},t_{k}\}\}$.  Let $x$ be the vertex of $X$ not being paired in $Y$. We aim to find a $Y$-linkage $\{L_{1},\ldots, L_{k}\}$ where each path $L_{i}$ joins the pair $\{s_{i},t_{i}\}$ and avoids the vertex $x$. 

The result for $d=4$ is given by \cref{thm:4polytopes-strong-linkedness}. So assume $d\ge 6$. 

From \cref{lem:facets-association} it follows that there exists a pair $\{F,F^o\}$ of opposite facets of $Q_{d}$ that is not associated with $X_{x}:=X\setminus \{x\}$, since $|X\setminus \{x\}|=d$ and there are $d$  pairs $\{F,F^o\}$ of opposite facets in $Q_{d}$. Assume $x\in V(F^{o})$.  Let $X^{p}:=\pi_{F}(X_{x})$; that is, the set $X^{p}$ comprises the vertices in $X_{x}\cap V(F)$ plus the projections of $X_{x}\cap V(F^{o})$ onto $F$. Denote by $Y^{p}$ the corresponding pairing of the vertices in $X^{p}$; that is, $Y^{p}:=\{\{\pi_{F}(s_{1}), \pi_{F}(t_{1})\},\ldots, \{\pi_{F}(s_{k}), \pi_{F}(t_{k})\}\}$. Then $|X^{p}|=d$ and $|Y^{p}|=k$.  Find a $Y^{p}$-linkage $\{L_{1}^{p},\ldots, L_{k}^{p}\}$ in $F$ with $L_{i}^{p}:=\pi_{F}(s_{i})-\pi_{F}(t_{i})$ by resorting to the $k$-linkedness of $F$ (\cref{thm:cube}).  Adding $s_{i}\in V(F^{o})$ or $t_{i}\in V(F^{o})$ to the path $L_{i}^{p}$, if necessary, we extend the linkage $\{L_{1}^{p},\ldots, L_{k}^{p}\}$ to the required $Y$-linkage. \end{proof}

\section{Connectivity of cubical polytopes}
\label{sec:cubical-connectivity}
The aim of this section is to present a couple of results related to the connectivity of strongly connected complexes in cubical polytopes.  

The first result is from  \cite{ThiPinUgo18v3}.

\begin{proposition}[{\cite[Prop.~13]{ThiPinUgo18v3}}]\label{prop:star-minus-facet}  Let $F$ be a facet in the star $\St$  of a vertex in a cubical $d$-polytope. Then the antistar of $F$ in $\St$ is a strongly connected $(d-2)$-subcomplex of $\St$. 
\end{proposition}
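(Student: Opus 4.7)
I plan to decompose the antistar $\A := \a-st(F,\St)$ into pieces indexed by the facets of $\St$ different from $F$. For each such facet $F'$, set $\A_{F'} := \a-st(F \cap F', F')$; since $F'$ is a $(d-1)$-cube and $F \cap F'$ is a proper face of $F'$, \cref{lem:cube-face-complex} immediately yields that $\A_{F'}$ is a strongly connected $(d-2)$-complex. Because $\St$ is pure (\cref{prop:st-ast-connected-complexes}), every face of $\A$ is contained in some facet of $\St$; this facet must differ from $F$, as the face is disjoint from $V(F)$. Hence $\A = \bigcup_{F' \ne F}\A_{F'}$, and in particular $\A$ is a pure $(d-2)$-complex.

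The next step is to understand how consecutive pieces overlap. If $H$ and $H'$ are two facets of $\St$, both distinct from $F$, sharing a ridge $R := H \cap H'$ of $\St$, then I would argue that $R \not\subseteq F$: otherwise the $(d-2)$-face $R$ of $P$ would be contained in three distinct facets $F, H, H'$ of $P$, contradicting the fact that every $(d-2)$-face of a polytope lies in exactly two facets. Consequently $F \cap R$ is a proper face of the $(d-2)$-cube $R$, and \cref{lem:cube-face-complex} applied to $R$ yields a strongly connected $(d-3)$-complex $\A_R := \a-st(F \cap R, R) \subseteq \A_H \cap \A_{H'}$. Its facets are $(d-3)$-faces of $\St$ disjoint from $V(F)$, i.e.\ ridges of $\A$, each incident to a facet of $\A_H$ and a facet of $\A_{H'}$.

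The hard part will be to guarantee that, for any two facets $F_1, F_2 \ne F$ of $\St$, a facet-ridge path in $\St$ from $F_1$ to $F_2$ can be chosen to avoid $F$. For this I would appeal to \cref{prop:link-polytope}, realising the link of $v$ as the boundary complex of a $(d-1)$-polytope $P^v$. Since every facet of $P^v$ is the vertex figure of a cube, and hence a simplex, $P^v$ is simplicial and its dual $(P^v)^{*}$ is a simple $(d-1)$-polytope. The dual graph of $\St$, whose vertices are the facets of $\St$ and whose edges are the pairs of facets sharing a ridge of $\St$, is exactly the graph of $(P^v)^{*}$; by \cref{thm:Balinski} this graph is $(d-1)$-connected, hence $2$-connected for $d \ge 3$, so removing the vertex corresponding to $F$ keeps the graph connected and supplies the desired detour.

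Finally, I would assemble the pieces. Given facets $P_1 \subseteq \A_{F_1}$ and $P_2 \subseteq \A_{F_2}$ of $\A$, fix a facet-ridge path $F_1 = H_0, H_1, \ldots, H_n = F_2$ in $\St$ that avoids $F$. Using the strong connectivity of each $\A_{H_i}$ to walk inside a single piece, and using the shared ridges in $\A_{R_i} \subseteq \A_{H_i} \cap \A_{H_{i+1}}$ (with $R_i := H_i \cap H_{i+1}$) to cross from one piece to the next, I would splice these walks together into a facet-ridge path in $\A$ from $P_1$ to $P_2$, establishing that $\A$ is a strongly connected $(d-2)$-subcomplex of $\St$.
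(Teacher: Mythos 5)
Your proposal is correct, and it takes essentially the approach the paper itself relies on: the paper cites this proposition from \cite{ThiPinUgo18v3} without reproving it, but your decomposition of the antistar into the per-facet pieces $F'-V(F)$ handled by \cref{lem:cube-face-complex}, the dualisation-plus-Balinski argument producing a facet-ridge path in $\St$ that avoids $F$, and the splicing across the shared $(d-3)$-subcomplexes in the ridges are precisely the ideas the authors describe as being ``pushed a bit further'' in their proof of \cref{lem:technical}. I see no gaps.
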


We proceed with two simple but useful remarks.
 
\begin{remark}
\label{rmk:opposite-vertex-1}
Let $P$ be a cubical $d$-polytope. Let  $v$  be a vertex  of $P$ and let $F$ be a face of $P$ containing $v$. In addition, let $v^{o}$ be the vertex of $F$ opposite to $v$. The smallest face in the polytope containing both $v$ and $v^{o}$ is precisely $F$.
\end{remark}

\begin{remark}
\label{rmk:two-vertices}
 For any two faces $F,J$ of a polytope, with $F$ not contained in $J$, there is a facet containing $J$ but not $F$. In particular, for any two distinct vertices of a polytope, there is a facet containing one but not the other.
\end{remark}


The proof idea in \cref{prop:star-minus-facet} can be pushed a bit further to obtain a rather technical result that we prove next (\cref{lem:technical}). 

\begin{lemma} Let $P$ be a cubical $d$-polytope with $d\ge 4$. Let $s_{1}$ be any vertex in $P$ and let $\St_{1}$ be the star of $s_{1}$ in the boundary complex of $P$.  Let $s_{2}$ be any vertex in $\St_{1}$, other than $s_{1}$. Define the following sets:
\begin{itemize}
\item $F_{1}$ in $\St_{1}$, a facet containing $s_{1}$ but not $s_{2}$;
\item  $F_{12}$ in $\St_{1}$, a facet containing $s_{1}$ and $s_{2}$;
\item $\St_{12}$, the star of $s_{2}$ in $\St_{1}$ (that is, the subcomplex of $\St_{1}$ formed by the facets of $P$ in $\St_{1}$ containing $s_{2}$);
\item $\mathcal A_{{1}}$, the antistar of $F_{1}$ in $\St_{1}$; and
\item $\mathcal A_{12}$, the subcomplex of $\St_{12}$ induced by $V(\St_{12})\setminus (V(F_{1})\cup V(F_{12}))$.
\end{itemize}
Then the following assertions hold.
\begin{enumerate}[(i)]
\item The complex $\St_{12}$ is a  strongly connected $(d-1)$-subcomplex of $\St_{1}$.
\item If there are more than two facets in $\St_{12}$, then, between any two facets of $\St_{12}$ that are different from $F_{12}$, there exists a $(d-1,d-2)$-path in $\St_{12}$ that does not contain the facet $F_{12}$.

\item If $\St_{12}$ contains more than one facet, then the subcomplex $\mathcal A_{{12}}$ of $\St_{12}$ contains a spanning strongly connected $(d-3)$-subcomplex.
\end{enumerate}
\label{lem:technical}
\end{lemma}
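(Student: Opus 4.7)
The plan is to deduce (i) and (ii) from the classical face-lattice quotient construction combined with Balinski's theorem, and to treat (iii) by a within-facet cube analysis glued together via (ii). Let $J$ denote the smallest face of $P$ containing both $s_{1}$ and $s_{2}$: every facet of $\St_{12}$ contains $\{s_{1},s_{2}\}$ and hence contains $J$, and conversely every facet of $P$ containing $J$ lies in $\St_{12}$, so $\St_{12}=\st(J,\B(P))$. Iterating \cref{prop:link-polytope} along a chain of vertices inside $J$ (equivalently, invoking the quotient construction on the interval $[J,P]$ of the face lattice) exhibits $\St_{12}$ as combinatorially governed by a $(d-\dim J-1)$-polytope $Q$: facets of $\St_{12}$ correspond bijectively to facets of $Q$, and ridges of $\St_{12}$ containing $J$ to ridges of $Q$. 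For (i), the strong connectivity of $\B(Q)$ supplied by \cref{prop:st-ast-connected-complexes} transfers through this bijection to strong connectivity of $\St_{12}$ as a $(d-1)$-complex.

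For (ii), the facet--ridge graph of $\St_{12}$ equals the graph $G(Q^{*})$ of the dual polytope. If $\St_{12}$ contains more than two facets then $Q$ has more than two facets, forcing $\dim Q\ge 2$ and hence $\dim Q^{*}\ge 2$. Balinski's theorem (\cref{thm:Balinski}) then implies that $G(Q^{*})$ is $2$-connected, so deleting the vertex of $G(Q^{*})$ corresponding to $F_{12}$ keeps the graph connected; translating back yields the desired facet--ridge path in $\St_{12}$ between any two facets distinct from $F_{12}$ that avoids $F_{12}$.

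For (iii), the plan is a local-then-global argument. For every facet $F'\in\St_{12}$ with $F'\ne F_{12}$, the intersections $R_{12}(F'):=F'\cap F_{12}$ and $R_{1}(F'):=F'\cap F_{1}$ are proper faces of the $(d-1)$-cube $F'$ with $R_{12}(F')\supseteq J\ni s_{2}$ and $s_{2}\notin R_{1}(F')$; in particular the two faces are distinct. Working in cube coordinates on $F'$ with $s_{1}$ at the origin, I would identify inside $F'$ a strongly connected $(d-3)$-dimensional subcomplex spanning $V(F')\setminus(V(R_{12}(F'))\cup V(R_{1}(F')))$, by a case analysis on the dimensions and relative positions of $R_{12}(F')$ and $R_{1}(F')$, leaning on \cref{lem:cube-face-complex} (antistars of proper faces of cubes are strongly connected) to extract the strongly connected pieces. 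These within-facet complexes are then stitched together along the facet--ridge paths supplied by (ii): each shared ridge between consecutive facets is a $(d-2)$-face of $P$ containing $J$, whose $(d-4)$-dimensional substructure sits inside the local complexes of both adjacent facets and provides the matching ridges needed for global strong connectivity.

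The main obstacle will be (iii): certifying that the $(d-3)$-dimensional piece within each $F'$ really spans $V(F')\setminus(V(R_{12}(F'))\cup V(R_{1}(F')))$ in all configurations (including degenerate ones where the two faces share cube coordinates or have dimension well below $d-2$), and verifying that the local pieces fit together consistently on shared ridges to yield a single spanning strongly connected $(d-3)$-subcomplex of $\mathcal A_{12}$.
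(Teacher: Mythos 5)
Your treatment of (i) and (ii) is correct and is essentially the paper's argument in different language: the quotient polytope $Q=P/J$ you describe is precisely the dual of the face $\psi(s_{1})\cap\psi(s_{2})$ of $P^{*}$ used in the paper, and both proofs reduce to Balinski's theorem (plus Menger for (ii)) on a polytope of dimension at least $2$. That part stands.

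For (iii), however, what you have written is a plan rather than a proof, and the step you defer --- ``identify inside $F'$ a strongly connected $(d-3)$-dimensional subcomplex spanning $V(F')\setminus(V(R_{12}(F'))\cup V(R_{1}(F')))$, by a case analysis on the dimensions and relative positions'' --- is the actual mathematical content of the lemma; you acknowledge as much in your closing paragraph. The difficulty is real: $F'-(V(F_{1})\cup V(F_{12}))$ is not itself an antistar of a single face of the cube $F'$, so \cref{lem:cube-face-complex} does not apply to it directly, and an unstructured case analysis on the positions of $F'\cap F_{1}$ and $F'\cap F_{12}$ is not obviously finite or uniform in $d$. The device that makes the local step work (and which your sketch is missing) is the vertex $s_{1}^{o}$ opposite to $s_{1}$ in $F'$: by \cref{rmk:opposite-vertex-1} it lies in neither $F_{1}$ nor $F_{12}$, it lies in \emph{every} ridge of $F'$ avoiding $s_{1}$, hence in every facet $R_{i}$ of the antistar of $F'\cap F_{1}$ in $F'$; this simultaneously shows that no $R_{i}$ is contained in $F_{12}$ (so each $\B(R_{i})-V(F_{12})$ is a nonempty spanning strongly connected $(d-3)$-subcomplex of $R_{i}$ by \cref{lem:cube-face-complex}) and that consecutive intersections $R_{i}\cap R_{j}$ are not contained in $F_{12}$ (so they supply the $(d-4)$-faces needed to stitch the pieces $\B(R_{i})-V(F_{12})$ together by induction along a facet--ridge path). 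Your global gluing step has the analogous unproved obligation: you assert that the shared ridge between consecutive facets of $\St_{12}$ has a ``$(d-4)$-dimensional substructure'' inside both local complexes, but this requires showing that the ridge minus $V(F_{12})$ is a nonempty strongly connected $(d-3)$-complex containing a $(d-3)$-face not contained in $F_{1}$, from which a common $(d-4)$-face can be extracted; without that, the local pieces need not meet in dimension $d-4$ at all. As written, the proposal does not establish (iii).
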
 

\begin{proof} Let us prove (i). Let $\psi$ define the natural anti-isomorphism from the face lattice of $P$ to the face lattice of its dual $P^{*}$. The facets in $\St_{1}$ correspond to the vertices in the facet $\psi(s_{1})$ in $P^{*}$ corresponding to $s_{1}$; likewise for the facets in $\st(s_{2},\B(P))$ and the vertices in $\psi(s_{2})$. The facets in $\St_{12}$ correspond to the vertices in the nonempty face $\psi(s_{1})\cap \psi(s_{2})$ of $P^{*}$. The existence a facet-ridge path in $\St_{12}$ between any two facets $J_{1}$ and $J_{2}$ of $\St_{12}$ amounts to the existence of a vertex-edge path in $\psi(s_{1})\cap \psi(s_{2})$ between $\psi(J_{1})$ and $\psi(J_{2})$. That $\St_{12}$ is a strongly connected $(d-1)$-complex now follows from the connectivity of the graph of $\psi(s_{1})\cap \psi(s_{2})$ (Balinski's theorem), as desired.

We proceed with the proof of (ii). Let $J_{1}$ and $J_{2}$ be two facets of $\St_{12}$, other than $F_{12}$. If there are more than two facets in $\St_{12}$, then the face $\psi(s_{1})\cap \psi(s_{2})$ is at least bidimensional. As a result, the graph of $\psi(s_{1})\cap \psi(s_{2})$ is at least 2-connected by Balinski's theorem. By Menger's theorem, there are at least two independent vertex-edge paths in $\psi(s_{1})\cap \psi(s_{2})$ between $\psi(J_{1})$ and $\psi(J_{2})$. Pick one such path $L^{*}$  that avoids the vertex $\psi(F_{12})$ of $\psi(s_{1})\cap \psi(s_{2})$. Dualising this path $L^{*}$ gives a $(d-1,d-2)$-path between $J_{1}$ and $J_{2}$ in  $\St_{12}$ that does not contain the facet $F_{12}$.

We finally prove (iii). Assume that $\St_{12}$ contains more than one facet. We need some additional notation.
\begin{itemize}
\item  Let $F$ be a facet in $\St_{12}$ other than $F_{12}$; it exists by our assumption on $\St_{12}$.
\item Let $\A_{1}^{F}$ denote the subcomplex  $F-V(F_{1})$; that is, $\A_{1}^{F}$ is the antistar of $F\cap F_{1}$ in $F$.  
\item Let $\A_{12}^{F}$ denote the subcomplex $F-(V(F_{1})\cup V(F_{12}))$, the subcomplex of $F$ induced by $V(F)\setminus (V(F_{1})\cup V(F_{12}))$.  
\end{itemize}
We require the following claim. 
 
\begin{claim} \label{cl:tech-lem-claim2} $\A_{12}^{F}$ contains a spanning strongly connected $(d-3)$-subcomplex $\C^{F}$. 
\end{claim}	
\begin{claimproof}  We first show that $\A_{12}^{F}\ne \emptyset$. Denoting by $s_{1}^{o}$ the vertex in $F$ opposite to $s_{1}$, we have that $s_{1}^{o}$ is not in $F_{1}$ or in $F_{12}$ by \cref{rmk:opposite-vertex-1}. So $s_{1}^{o}$  is in $\A_{12}^{F}$.  
 
Notice that $s_{1}\not \in \A_{1}^{F}$. From \cref{lem:cube-face-complex} it follows that $\A_{1}^{F}$ is a strongly connected $(d-2)$-subcomplex of $F$. Write \[\A_{1}^{F}=\C(R_{1})\cup\cdots \cup \C(R_{m}),\] where $R_{i}$ is a $(d-2)$-face of $F$ for $i\in [1,m]$. No ridge $R_{i}$ is contained in $F_{12}$; otherwise $R_{i}=F\cap F_{12}$, which implies that $s_{1}\in R_{i}$, and therefore that $s_{1}\in\A_{1}^{F}$, a contradiction. Moreover, $s_{1}^{o}\in R_{i}$ for every $i\in [1,m]$, since every ridge of $F$ contains either $s_{1}$ or $s_{1}^{o}$, and $s_{ 1}\not\in R_{i}$.

Let $\C_{i}:=\B(R_{i})-V(F_{12})$. As $R_{i}\not\subset F_{12}$, we have $\dim R_{i}\cap F_{12}\le d-3$. Hence $\C_{i}$ is nonempty.  If $R_{i}\cap F_{12}\ne \emptyset$, then  $\C_{i}$ denotes  the antistar  of $R_{i}\cap F_{12}$ in $R_{i}$, a spanning strongly connected $(d-3)$-subcomplex of $R_{i}$ by \cref{lem:cube-face-complex}. If $R_{i}\cap F_{12}= \emptyset$, then $\C_{i}$ denotes the boundary complex of $R_{i}$, again a spanning strongly connected $(d-3)$-subcomplex of $R_{i}$.

Let \[\C^{F}:=\bigcup \C_{i}.\] Then the complex $\C^{F}$ is a spanning $(d-3)$-subcomplex of $\A_{12}^{F}$; we show it is strongly connected. 

Take any two $(d-3)$-faces $W$ and $W'$ in $\C^{F}$. We find a $(d-3,d-4)$-path $L$ in $\C^{F}$ between $W$ and $W'$. There exist ridges $R$ and $R'$ in $\A_{1}^{F}$ with $W\subset R$ and $W'\subset R'$.  Since $\A_{1}^{F}$ is a strongly connected $(d-2)$-complex, there is a $(d-2,d-3)$-path $R_{i_1}\ldots R_{i_{p}}$ in $\A_{1}^{F}$ between $R_{i_1}=R$ and $R_{i_{p}}=R'$, with $R_{i_{j}}\in \A_{1}^{F}$ for $j\in [1,p]$. We will show by induction on the length $p$ of the $(d-2,d-3)$-path $R_{i_1}\ldots R_{i_{p}}$ that there is a $(d-3,d-4)$-path in $\C^F$ between $W$ and $W'$.

If $p=1$, then $R_{i_{1}} = R_{i_{p}} = R = R'$. The existence of the path follows from the strong connectivity of $\C_{i_{1}}$.

Suppose that the claim is true when the length of the path is $p-1$.  We  already established that $s_{1}^{o}\in R_{i_{j}}$ for every $j\in [1,p]$ and that $s^{o}_{1}\not \in F_{12}$. Consequently, we get that $R_{i_{p-1}}\cap R_{i_{p}}\not \subset F_{12}$, and therefore, that $\dim R_{i_{p-1}}\cap R_{i_{p}}\cap F_{12}\le d-4$. Hence  the subcomplex $\B_{i_{p-1}}:=\B(R_{i_{p-1}}\cap R_{i_{p}})-V(F_{12})$ of $\B(R_{i_{p-1}}\cap R_{i_{p}})$ is a nonempty, strongly connected $(d-4)$-complex by \cref{lem:cube-face-complex}; in particular, it contains a $(d-4)$-face $U_{i_{p}}$.  Furthermore, $\B_{i_{p-1}}\subset \C_{i_{p-1}}\cap \C_{i_{p}}$.

Let $W_{i_{p-1}}$ and $W_{i_{p}}$ be $(d-3)$-faces in $\C_{i_{p-1}}$ and $\C_{i_{p}}$ containing $U_{i_{p}}$ respectively. By the induction hypothesis, the existence of the $(d-2,d-3)$-path $R_{i_1}\ldots R_{i_{p-1}}$ implies the existence of a $(d-3,d-4)$-path $L_{p-1}$ in $\C^F$ from $W$ to $W_{i_{p-1}}$. The strong connectivity of $\C_{i_{p}}$ gives the existence of a path $L_{p}$ from $W_{i_{p}}$ to $W'$. Finally, the desired path $L$ is the concatenation of these two paths: $L=L_{p-1}L_p$. The existence of the path $L$ between $W$ and $W'$ completes the proof of \cref{cl:tech-lem-claim2}. 
\end{claimproof}

We are now ready to complete the proof of (iii). The proof goes along the lines of the proof of \cref{cl:tech-lem-claim2}. We let 
\[\St_{12}=\bigcup_{i=1}^{m} \C(J_{i}),\] where the facets $J_{1},\ldots,J_{m}$ are all the facets in $P$ containing $s_{1}$ and $s_{2}$. 

For every $i\in [1,m]$ we let  $\C^{J_{i}}$ be the spanning strongly connected $(d-3)$-subcomplex in $\A_{12}^{J_{i}}$ given by ~\cref{cl:tech-lem-claim2}. And we let \[\C:=\bigcup \C^{J_{i}}.\] Then $\C$ is a spanning $(d-3)$-subcomplex of $\A_{12}$; we show it is strongly connected. 

If there are exactly two facets in $\St_{12}$, namely $F_{12}$ and some other facet $F$, then the complex $\A_{12}$ coincides with the complex $\A_{12}^{F}$. The strong $(d-3)$-connectivity of $\A_{12}^{F}$ is then settled by \cref{cl:tech-lem-claim2}. Hence assume that there are more than two facets in  $\St_{12}$; this implies that the smallest face containing $s_{1}$ and $s_{2}$ in $\St_{12}$ is at most $(d-3)$-dimensional. 

Take any two $(d-3)$-faces $W$ and $W'$ in $\C$.  Let $J\ne F_{12}$ and $J'\ne F_{12}$ be facets of $\St_{12}$ such that $W\subset J$ and $W'\subset J'$. By (ii), we can find a $(d-1,d-2)$-path $J_{i_{1}}\ldots J_{i_{q}}$ in $\St_{12}$ between $J_{i_{1}}=J$ and $J_{i_{q}}=J'$ such that $J_{i_{j}}\ne F_{12}$ for $j\in[1,q]$. We will show that a $(d-3,d-4)$-path $L$ exists between $W$ and $W'$ in $\C$, using an induction on the length $q$ of the path $J_{i_{1}}\ldots J_{i_{q}}$.

If $q=1$, then $W$ and $W'$ belong to the same facet $F$ in $\St_{12}$, which is different from $F_{12}$. In this case, $W$ and $W'$  are both in $\A_{12}^{F}$, and consequently, \cref{cl:tech-lem-claim2} gives the desired $(d-3,d-4)$-path between $W$ and $W'$ in $\A_{12}^{F}\subseteq \C$.

Suppose that the induction hypothesis holds when the length of the path is $q-1$. 
First, we show that there exists a $(d-4)$-face $U_{q}$ in $C^{J_{i_{q-1}}}\cap C^{J_{i_{q}}}$. As $J_{i_{q-1}},J_{i_{q}}\ne F_{12}$, we obtain that $\B(J_{i_{q-1}}\cap J_{i_{q}})-V(F_{12})$ is a nonempty, strongly connected $(d-3)$-subcomplex (\cref{lem:cube-face-complex}); in particular, it contains a $(d-3)$-face $K_{q}$. We pick $U_{q}$ in $\B(K_{q})-V(F_{1})$ as follows. It holds that $K_{q}\not \subset F_{1}$; otherwise $K_{q}=J_{i_{q-1}}\cap J_{i_{q}}\cap F_{1}$, a contradiction because  $s_{1}\not \in K_{q}$  but $s_{1} \in J_{i_{q-1}}\cap J_{i_{q}}\cap F_{1}$. As a  consequence, $\B(K_{q})-V(F_{1})$ is a nonempty, strongly connected $(d-4)$-subcomplex (\cref{lem:cube-face-complex} again); in particular, it contains a desired $(d-4)$-face $U_{q}$.

Pick $(d-3)$-faces $W_{q-1}\in \C^{J_{i_{q-1}}}$ and $W_{q}\in \C^{J_{i_{q}}}$ such that both contain the $(d-4)$ face $U_{q}$. The induction hypothesis tells us that there exists a $(d-3,d-4)$-path $L_{q-1}$ from $W$ to $W_{q-1}$ in $\C$. And the strong $(d-3)$-connectivity of $\C^{J_{i_{q}}}$ ensures that there exists a $(d-3,d-4)$-path $L_q$ from $W_{q}$ to $W'$. By concatenating these two paths, we can obtain the path $L=WL_{q-1}W_{q-1}W_{q}L_{q}W'$. This completes the proof of the lemma.\end{proof}

\section{Linkedness of cubical polytopes}
\label{sec:cubical-linkedness}	 

The aim of this section is to prove that, for every $d\ne 3$, a cubical $d$-polytope is $\floor{(d+1)/2}$-linked (\cref{thm:cubical}). It suffices to prove \cref{thm:cubical} for odd $d\ge 5$; since $\floor{d/2}=\floor{(d+1)/2}$ for even $d$, \cref{prop:weak-linkedness-cubical} trivially establishes \cref{thm:cubical} in this case.

The proof of \cref{thm:cubical} heavily relies on \cref{lem:star-cubical}. To state the lemma we require a generalisation of \cref{def:Conf-3F}.

 \begin{definition}[Configuration $d$F]\label{def:Conf-dF} Let $d\ge 3$ be odd and let $X$ be a set of at least $d+1$ terminals in a cubical $d$-polytope $P$. In addition, let $Y$ be a  labelling and pairing of the vertices in $X$. A terminal of $X$, say $s_{1}$, is in {\it Configuration $d$F} if the following conditions are satisfied:
 \begin{enumerate}
 \item[(i)]  at least $d+1$ vertices of $X$ appear in a facet $F$ of $P$;
 \item[(ii)]  the terminals in the pair $\{s_{1}, t_{1}\}\in Y$ are at distance $d-1$ in $F$ (that is, $\dist_{F}(s_{1},t_{1})=d-1$); and
 \item[(iii)] the neighbours of $t_{1}$ in $F$ are all vertices of $X$. 
 \end{enumerate}
\end{definition}
 
As you may already suspect, for $d=3$, Configuration $d$F for a vertex coincides with Configuration 3F for the same vertex. 

\begin{lemma} Let $d\ge 5$ be odd and let $k:=(d+1)/2$. Let $s_{1}$ be a vertex in a cubical $d$-polytope and let $\St_{1}$  be the star of  $s_{1}$ in the polytope. Moreover, let $Y:=\{\{s_{1},t_{1}\},\ldots,\{s_{k},t_{k}\}\}$ be a labelling and pairing of $2k$ distinct vertices of $\St_{1}$.  Then the set $Y$ is linked in $\St_{1}$ if and only if the vertex $s_{1}$ is not in Configuration $d$F.  
\label{lem:star-cubical} 
\end{lemma}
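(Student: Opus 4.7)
\smallskip

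\textbf{Proof plan.}

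For necessity (the ``only if'' direction), suppose $s_{1}$ is in Configuration $d$F, with witnessing facet $F$. The plan is to show that $t_{1}$'s neighborhood in $\St_{1}$ is already exhausted by terminals. Since $\dist_{F}(s_{1},t_{1})=d-1$, the vertices $s_{1}$ and $t_{1}$ are opposite in the cube $F$, so by Remark~\ref{rmk:opposite-vertex-1} the smallest face of $P$ containing both is $F$ itself. Any edge of $\St_{1}$ incident to $t_{1}$ lies in a facet of $P$ containing $s_{1}$ and $t_{1}$, and hence lies in $F$. Thus the neighbors of $t_{1}$ in $\St_{1}$ coincide with $N_{F}(t_{1})$, which by condition~(iii) are all terminals. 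In a putative $Y$-linkage the path $L_{1}$ would have to enter $t_{1}$ through some neighbor, forcing an interior vertex of $L_{1}$ to be a terminal of another pair, contradicting vertex-disjointness.

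For sufficiency, assume $s_{1}$ is not in Configuration $d$F, and split into two cases according to whether some facet of $\St_{1}$ contains all $d+1$ terminals.

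\textbf{Case A: a facet $F\in\St_{1}$ contains all of $X$.} Apply \cref{lem:short-distance} to $F$ to obtain, for at least $k-1$ of the pairs, an $X$-valid path in $F$. The proof of that lemma also constrains any ``bad'' pair $\{s,t\}$ (one without an $X$-valid path in $F$): after possibly swapping its labels, $X\setminus\{s,t\}=N_{F}(t)$. If the bad pair is not $\{s_{1},t_{1}\}$, or if no bad pair exists, we first jointly realize $k-1$ disjoint $X$-valid paths in $F$ covering $\{s_{1},t_{1}\}$ using the $(k-1)$-linkedness of the $(d-1)$-cube $F$ (\cref{thm:cube}, since $d-1\ge 4$), strengthened by \cref{thm:cube-strong-linkedness} to avoid the two leftover terminals; the bad pair is then linked via a detour through $\St_{1}\setminus F$, whose relevant antistar is strongly $(d-3)$-connected by \cref{lem:technical}(iii). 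If the bad pair is $\{s_{1},t_{1}\}$ with $N_{F}(t_{1})=X\setminus\{s_{1},t_{1}\}$, the failure of Configuration $d$F forces condition~(ii) to fail, so $s_{1}$ and $t_{1}$ lie in a common proper face $J$ of $F$ with $\dim J<d-1$; by \cref{rmk:two-vertices} there is a second facet $F'\ne F$ of $P$ containing $J$, and $F'\in\St_{1}$ since $s_{1}\in F'$. We link $\{s_{1},t_{1}\}$ via $F'$, which contains few terminals, and the $k-1$ other pairs inside $F$. The remaining variant — $N_{F}(s_{1})=X\setminus\{s_{1},t_{1}\}$ with some non-terminal neighbor of $t_{1}$ in $F$ — is handled symmetrically: we approach $t_{1}$ in $F$ through that non-terminal neighbor while escaping $s_{1}$ via an edge to the opposite facet of $F$ in some facet of $\St_{1}$.

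\textbf{Case B: no facet of $\St_{1}$ contains all of $X$.} Here the terminals are distributed across at least two facets of $\St_{1}$. The plan is to pick, for each pair $\{s_{i},t_{i}\}$, a facet $F_{i}\in\St_{1}$ containing both endpoints when possible and route $L_{i}$ inside it via the linkedness of the cube $F_{i}$ (\cref{thm:cube}); pairs whose endpoints lie in distinct facets are routed by concatenating a facet-ridge path from \cref{lem:technical}(i)–(ii) with intra-facet subpaths. Disjointness across pairs is secured by using \cref{lem:technical}(iii) to find, in the relevant antistar, a spanning strongly connected $(d-3)$-subcomplex that provides the necessary ``backbone'' along which each path can be perturbed away from previously chosen vertices, together with \cref{prop:connected-complex-connectivity} to certify sufficient local connectivity.

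\textbf{Main obstacle.} The delicate part is Case~A when the bad pair is $\{s_{1},t_{1}\}$: once $k-1$ paths are fixed inside $F$, there is little room left to route $L_{1}$ without colliding with terminals or previously used vertices. The failure of condition~(ii) or~(iii) of Configuration $d$F is the precise lever that makes the routing feasible, and converting this combinatorial leverage into an explicit disjoint path — while keeping the bookkeeping consistent across sub-cases — is where the argument must be most careful. The strong connectivity results of \cref{sec:cubical-connectivity}, especially \cref{lem:technical}, are the main tools enabling this bookkeeping.
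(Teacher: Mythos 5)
Your necessity argument is correct and is essentially the paper's: since $\dist_{F}(s_{1},t_{1})=d-1$, the facet $F$ is the only facet of $\St_{1}$ containing $t_{1}$, so $N_{\St_{1}}(t_{1})=N_{F}(t_{1})\subseteq X$ and $L_{1}$ cannot avoid the other pairs. The sufficiency direction, however, has genuine gaps at exactly the points where the paper spends most of its effort. In your Case~A, the pair that must leave $F$ has to be re-routed through the antistar $\A_{1}$ of $F$ in $\St_{1}$, and the entry into $\A_{1}$ is via the injection of \cref{lem:projections-star} --- which is undefined at the vertex $s_{1}^{o}$ opposite $s_{1}$ in $F$, because $s_{1}^{o}$ has no neighbour in $\A_{1}$. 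Whether $s_{1}^{o}$ is a terminal, and which one, is precisely what forces the paper's Case~4 to split into Subcases A, B and C (including a detour through the link of $s_{1}$ in $F$ via \cref{prop:link-cubical} when $s_{1}^{o}\in X$); your plan never confronts this. Two smaller problems in Case~A: strong $(k-1)$-linkedness of the $(d-1)$-cube $F$ lets $k-1$ pairs avoid \emph{one} extra vertex (a set of $d$ vertices in total), not the two leftover terminals you claim; and a detour path through a second facet $F'$ is not automatically disjoint from the $k-1$ paths inside $F$, since $F'\cap F$ can be a ridge of $F$ containing many of their vertices.

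Your Case~B is a plan rather than a proof. ``Perturbing each path along a $(d-3)$-connected backbone away from previously chosen vertices'' does not establish disjointness: the facets of $\St_{1}$ all share $s_{1}$ and overlap pairwise in ridges, and the entire difficulty is the bookkeeping that guarantees the $k$ paths, routed through these overlapping pieces, never meet. The paper needs three separate cases here ($|X\cap V(F_{1})|=d$, $3\le|X\cap V(F_{1})|\le d-1$, and $|X\cap V(F_{1})|=2$, for $F_{1}$ a facet containing $t_{1}$ with the most terminals), each driven by a concrete counting device your sketch omits: a pair of opposite ridges not associated with the terminal set (\cref{lem:facets-association}), the injection of \cref{lem:projections-star}, Menger-type routings into a chosen ridge, and the explicit cardinality estimates for the landing set $W$ in the proof's Case~3. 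Moreover, \cref{lem:technical}(iii) concerns $\A_{12}$, the part of the star of $s_{2}$ in $\St_{1}$ lying outside $F_{1}\cup F_{12}$, not a generic antistar, so invoking it as a universal disjointness tool is a misapplication. Finally, for $d=5$ every ridge is a 3-cube, which is not 2-linked (\cref{cor:nonsimplicial-3polytope}); the paper must rule out Configuration 3F by hand in essentially every subcase, and nothing in your proposal accounts for this.
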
 
	 
We defer the proof of \cref{lem:star-cubical} to Subsection~\ref{subsec:star-cubical}. We are now ready to prove our main result, assuming \cref{lem:star-cubical}.  
 
\begin{theorem}[Linkedness of cubical polytopes]\label{thm:cubical} For every $d\ne 3$, a cubical $d$-polytope is $\floor{(d+1)/2}$-linked.
\end{theorem}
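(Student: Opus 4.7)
The cases $d=1,2$ are immediate. For $d=4$, \cref{prop:4polytopes} gives $2$-linkedness, and $\floor{(d+1)/2}=2$. For every even $d\ge 4$ one has $\floor{(d+1)/2}=\floor{d/2}$, so \cref{prop:weak-linkedness-cubical} disposes of all even $d$ at once. Thus the work reduces to odd $d\ge 5$, and I set $k:=(d+1)/2$, so that the terminal set $X$ has cardinality $2k=d+1$.

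\textbf{Main strategy via the star lemma.} Given terminals $X$ with pairing $Y=\{\{s_1,t_1\},\ldots,\{s_k,t_k\}\}$ in a cubical $d$-polytope $P$, I fix one pair, say $\{s_1,t_1\}$, and consider $\St_1:=\st(s_1,\B(P))$. The cleanest case is $X\subseteq V(\St_1)$: then \cref{lem:star-cubical} directly yields a $Y$-linkage inside $\St_1$ provided $s_1$ is not in Configuration $d$F, and by relabelling the pair one would like to avoid this configuration whenever possible. If Configuration $d$F is unavoidable, its definition forces all terminals into a single facet $F$ (a $(d-1)$-cube) with $t_1$ opposite $s_1$ in $F$ and every neighbour of $t_1$ in $F$ a terminal; here I would detour the $s_1$-$t_1$ path out of $F$ through another facet $F'\in\St_1$, using \cref{prop:star-minus-facet} (the antistar of $F$ in $\St_1$ is strongly connected of dimension $d-2$) to secure an $X$-valid detour, and then appeal to the $(k-1)$-linkedness and strong $(k-1)$-linkedness of the cube $F$ (\cref{thm:cube}, \cref{thm:cube-strong-linkedness}) to link the remaining $k-1$ pairs inside $F$, disjointly from the detour.

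\textbf{Terminals not all in one star.} If $X\not\subseteq V(\st(s_i,\B(P)))$ for every $s_i\in X$, I would first produce an $X$-valid path $L_1$ from $s_1$ to $t_1$ by choosing another terminal $s_2$ and invoking \cref{lem:technical}(iii) to obtain a spanning strongly connected $(d-3)$-subcomplex in the overlap of $\St_1$ with the star of $s_2$; Menger's theorem (\cref{thm:Menger-consequence}) applied to the connectivity supplied by \cref{prop:connected-complex-connectivity} then routes $L_1$ through this subcomplex while avoiding the other terminals. The remaining $k-1$ pairs are handled by restricting attention to a facet of $P$ containing most of them---a $(d-1)$-cube, hence $(k-1)$-linked by \cref{thm:cube}---while \cref{lem:facets-association} is used to ensure enough ``free projection'' so that the $k-1$ in-facet paths stay disjoint from $L_1$ outside the facet.

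\textbf{Main obstacle.} The principal obstruction is Configuration $d$F, the direct generalisation of the $3$F-obstruction that makes the theorem genuinely fail at $d=3$: an entire facet stuffed with terminals in a rigid arrangement prevents the $s_1$-$t_1$ path from being routed inside $F$. The hardest step is engineering the detour through $\St_1\setminus V(F)$ so that it is sufficiently short and so that its inner vertices do not interfere with the $(k-1)$-linkage inside $F$; I expect this bookkeeping to parallel the partition $X_0,\ldots,X_4$ of in-facet terminals used in the second scenario of the proof of \cref{thm:cube}, classifying each terminal by how its projection onto the opposite facet interacts with the others. A secondary difficulty, in the non-star case, is that linkedness is not preserved under vertex deletion, so $L_1$ must be constructed with inner vertices lying in a region of $P$ whose complement retains enough cubical structure for \cref{thm:cube} and \cref{thm:cube-strong-linkedness} to apply to the residual $k-1$ pairs.
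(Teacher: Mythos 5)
Your reductions and the overall plan of working inside the star $\St_1$ of a terminal via \cref{lem:star-cubical} match the paper, but your treatment of the main obstruction contains a genuine error. In Configuration $d$F the pair $\{s_1,t_1\}$ satisfies $\dist_F(s_1,t_1)=d-1$ in a facet $F$ of $\St_1$, so $t_1$ is the vertex of $F$ opposite to $s_1$; by \cref{rmk:opposite-vertex-1} the smallest face containing both is $F$ itself, hence $F$ is the \emph{only} facet of $\St_1$ containing $t_1$, and every neighbour of $t_1$ in $\St_1$ lies in $F$ and is a terminal. Consequently every $s_1$--$t_1$ path inside $\St_1$ must pass through a terminal: \cref{lem:star-cubical} is an if-and-only-if, and $Y$ is provably \emph{not} linked in $\St_1$ in this configuration. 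Your proposed detour ``through another facet $F'\in\St_1$'' using the antistar of $F$ in $\St_1$ (\cref{prop:star-minus-facet}) therefore cannot reach $t_1$; the escape route must leave the star entirely. The paper does this by taking a ridge $R$ of $F_1$ containing $\bar t_1$, passing to the \emph{other} facet $J$ of $P$ containing $R$ (which is not in $\St_1$), and working in the ridge $R_J$ of $J$ disjoint from $F_1$ — either rerouting one of the feeder paths through $R_J$ to move a terminal and destroy Configuration $d$F, or building the whole residual linkage inside $R_J$ via \cref{thm:cube}. Without some such exit from $\St_1$, your argument cannot close this case.

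A second, structural difference: the paper never splits into ``all terminals in a star'' versus ``not all in a star.'' Its first step uses the $d$-connectivity of $G(P)$ and Menger's theorem to route all $2k-1=d$ terminals other than $s_1$ into $\St_1$ along disjoint paths, replacing $X$ by the set $\bar X$ of entry points; the star lemma is then applied to $\bar X$ unconditionally. Your separate ``non-star'' case — invoking \cref{lem:technical}(iii) to route $L_1$ and then linking the remaining pairs in some facet containing ``most'' of them — is not developed enough to be checkable (there is no reason a single facet contains the other $k-1$ pairs, and \cref{lem:technical} is a tool used inside the proof of \cref{lem:star-cubical}, not a substitute for it), and it is rendered unnecessary by the Menger reduction. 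The feeder-path device is also what makes the Configuration-$d$F repair possible, since one can redirect a feeder path to relocate a troublesome terminal $\bar s_\ell$; relabelling the distinguished pair, as you suggest, is not how the paper escapes the configuration.
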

\begin{proof}

\cref{prop:weak-linkedness-cubical}  settled the case of even $d$, so we assume $d$ is odd. 

Let $d$ be odd and $d\ge 5$ and let $k:=(d+1)/2$.  Let $X$ be any set of $2k$ vertices in the graph $G$ of a cubical $d$-polytope $P$. Recall the vertices in $X$ are called terminals. Also let $Y:=\{\{s_{1},t_{1}\},\ldots,\{s_{k},t_{k}\}\}$ be a labelling and pairing of the vertices of $X$. We aim to find a $Y$-linkage $\{L_{1},\ldots,L_{k}\}$ in $G$ where $L_{i}$ joins the pair $\{s_{i},t_{i}\}$ for $i=1,\ldots,k$. Recall that a path is $X$-valid if it contains no inner vertex from $X$. 
 
The first step of the proof is to reduce the analysis space from the whole polytope to a more manageable space, the star $\St_1$ of a terminal vertex in the boundary complex of $P$, say that of $s_{1}$. We do so by considering $d=2k-1$ disjoint paths $S_{i}:=s_{i}-\St_1$ ($i\in [2,k]$) and $T_{j}:=t_{j}-\St_1$ ($j\in [1,k]$) from the terminals into $\St_{1}$. Here we resort to the $d$-connectivity of $G$. In addition, let $S_{1}:=s_{1}$. We then denote by $\bar s_{i}$ and $\bar t_{j}$ the intersection of the paths $S_{i}$ and $T_{j}$ with $\St_1$. Using the vertices  $\bar s_{i}$ and $\bar t_{i}$ for $i\in [1,k]$, define sets $\bar X$ and $\bar Y$ in $\St_{1}$, counterparts to the sets $X$ and $Y$ of $G$. In an abuse of terminology, we also say that the vertices $\bar s_{i}$ and $\bar t_{i}$ are terminals. In this way, the existence of a $\bar Y$-linkage $\{\bar L_{1},\ldots,\bar L_{k}\}$ with $\bar L_{i}:=\bar s_{i}-\bar t_{i}$ in $G(\St_{1})$ implies the existence of a $Y$-linkage $\{L_{1},\ldots,L_{k}\}$ in $G(P)$, since each path $\bar L_{i}$ ($i\in [1,k]$) can be extended with the paths $S_{i}$ and $T_{i}$ to obtain the corresponding path $L_{i}=s_{i}S_{i}\bar s_{i}\bar L_{i}\bar t_{i}T_{i}t_{i}$.

The second step of the proof is to find a $\bar Y$-linkage $\{\bar L_{1},\ldots,\bar L_{k}\}$ in $G(\St_{1})$, whenever possible.  According to \cref{lem:star-cubical}, there is a $\bar Y$-linkage in $G(\St_{1})$ provided that the vertex $s_{1}$ is not in Configuration $d$F. The existence of a $\bar Y$-linkage in turn gives the existence of a $Y$-linkage, and completes the proof of the theorem in this case.

The third and final step is to deal with Configuration $d$F for $s_{1}$. Hence assume that the vertex $s_{1}$  is in Configuration $d$F. This is implies that
\begin{enumerate}	
 \item[(i)] there exists a unique facet $F_{1}$ of $\St_1$ containing $\bar t_{1}$; that 
 \item[(ii)]   $|\bar X\cap V(F_{1})|= d+1$; and that
  \item[(iii)]  $\dist_{F_{1}}(\bar s_{1},\bar t_{1})=d-1$ and all the $d-1$ neighbours of $\bar t_{1}$ in $F_{1}$, and thus in $\St_1$, belong to $\bar X$.
\end{enumerate}	

Let $R$ be a $(d-2)$-face of $F_{1}$ containing $s_{1}^{o}=\bar t_{1}$, then $s_{1}\not \in R$. Denote by $R_{F_{1}}$ the  $(d-2)$-face of $F_{1}$ disjoint from $R$. Let $J$ be the other facet of $P$ containing $R$ and let $R_{J}$ denote the  $(d-2)$-face of $J$ disjoint from $R$. Then $R_{J}$ is disjoint from $F_{1}$. Partition the vertex set $V(R_{J})$ of $R_{J}$ into the vertex sets of two induced subgraphs $G_{\text{bad}}$ and $G_{\text{good}}$ such that $G_{\text{bad}}$ contains the neighbours of the terminals in $R$, namely $V(G_{\text{bad}})=\pi_{R_{J}}^{J}(\bar X\cap V(R))$ and $V(G_{\text{good}})=V(R_{J})\setminus V(G_{\text{bad}})$. Then $\pi_{R}^{J}(V(G_{\text{bad}}))\subseteq \bar X$ and $\pi_{R}^{J}(V(G_{\text{good}}))\cap \bar X=\emptyset$.  See \cref{fig:Aux-Linked-Thm}(a).

Consider again the paths $S_{i}$ and $T_{j}$ that bring the vertices $s_{i}$ ($i\in [2,k]$) and $t_{j}$ ($j\in [1,k]$) into $\St_1$. Also recall that the paths $S_{i}$ and $T_{j}$ intersect $\St_1$ at $\bar s_{i}$ and $\bar t_{j}$, respectively. We distinguish two cases: either at least one path $S_{i}$ or $T_{j}$ touches $R_{J}$ or no  path  $S_{i}$ or $T_{j}$ touches $R_{J}$. In the former case we redirect one aforementioned path $S_{i}$ or $T_{j}$ to break Configuration $d$F for $s_{1}$ and use \cref{lem:star-cubical}, while in the latter case we find the $\bar Y$-linkage using the antistar of $s_{1}$.  

\begin{case}\label{case:new-linkedness-thm-1} Suppose at least one path $S_{i}$ or $T_{j}$ touches $R_{J}$. \end{case}

 If possible, pick one such path, say $S_{\ell}$, for which it holds that $V(S_{\ell})\cap V(G_{\text{good}})\ne \emptyset$. Otherwise, pick one such path, say $S_{\ell}$, that does not contain $\pi_{R_{J}}^{J}(t_{1})$, if it is possible. If none of these two selections are possible, then there is exactly one path $S_{i}$ or $T_{j}$ touching $R_{J}$, say $S_{\ell}$, in  which  case  $\pi_{R_{J}}^{J}(t_{1})\in V(S_{\ell})$.

We replace the path $S_{\ell}$ by a new path $s_{\ell}-\St_{1}$ that is disjoint from the other paths $S_{i}$ and $T_{j}$ and we replace the old terminal $\bar s_{}$ by a new terminal that causes $s_{1}$ not to be in Configuration $d$F. First suppose that there exists $s_{\ell}'$  in  $V(S_{\ell})\cap V(G_{\text{good}})$. Then the old  path $S_{\ell}$ is replaced by the path $s_{\ell}S_{\ell}s'_{\ell}\pi_{R}^{J}(s_{\ell}')$, and the old terminal $\bar s_{\ell}$ is replaced by $\pi_{R}^{J}(s_{\ell}')$. Now suppose that $V(S_{\ell})\cap V(G_{\text{good}})=\emptyset$. Then every  path $S_{i}$ and $T_{j}$ that touches $R_{J}$ is disjoint from $G_{\text{good}}$. Denote by $s_{\ell}'$ the first intersection of $S_{\ell}$ with $R_J$. Let $M_{\ell}$ be a shortest path in $R_{J}$ from $s_{\ell}'\in V(G_{\text{bad}})$ to a vertex  $s_{\ell}''\in V(G_{\text{good}})$.  By our selection of $S_{\ell}$ this path $M_{\ell}$ always exists. If $s_{\ell}''\in V(G_{\text{good}})\setminus V(\St_1)$ then the old  path $S_{\ell}$ is replaced by the path $s_{\ell}S_{\ell}s'_{\ell}M_{\ell}s''_{\ell}\pi_{R}^{J}(s_{\ell}'')$, and the old terminal $\bar s_{\ell}$ is replaced by $\pi_{R}^{J}(s_{\ell}'')$. If instead  $s_{\ell}''\in V(G_{\text{good}})\cap V(\St_1)$ then the old  path $S_{\ell}$ is replaced by the path $s_{\ell}S_{\ell}s'_{\ell}M_{\ell}s''_{\ell}$, and the old terminal $\bar s_{\ell}$ is replaced by $s_{\ell}''$. Refer to \cref{fig:Aux-Linked-Thm}(b) for a depiction of this case.

In any case, the replacement of the old vertex $\bar s_{\ell}$ with the new $\bar s_{\ell}$  forces $s_{1}$ out of Configuration $d$F, and we can apply \cref{lem:star-cubical} to find a $\bar Y$-linkage.  The case of $S_{\ell}$ being equal to $T_{1}$ requires a bit more explanation in order to make sure that the vertex  $s_{1}$ does not end up in a new configuration $d$F.  Let $\A_1$ be the antistar  of $F_{1}$ in $\St_1$.  The new vertex $\bar t_{1}$ is either in $F_{1}$ or in $\A_{1}$. If the new $\bar t_{1}$ is in $F_{1}$ then it is plain that $s_{1}$ is not in Configuration $d$F. If the new vertex $\bar t_{1}$  is in $\A_{1}$,  then a new facet $F_{1}$ containing $s_{1}$ and the new $\bar t_{1}$ cannot contain all the $d-1$ neighbours of the old $\bar t_{1}$ in the old $F_{1}$, since the intersection between the new and the old $F_{1}$ is at most $(d-2)$-dimensional and no $(d-2)$-dimensional face of the old $F_{1}$ contains all the $d-1$ neighbours of the old $\bar t_{1}$. This completes the proof of the case.

\begin{case}\label{case:new-linkedness-thm-2} For any ridge $R$ of $F_{1}$ that contains $\bar t_{1}$, the aforementioned ridge $R_{J}$ in the facet $J$ is disjoint from all the paths $S_{i}$ and $T_{j}$. \end{case}
 
Consider the vertex $\bar t_{1}$ in $F_{1}$, an aforementioned ridge $R$, and the corresponding facet $J$ and ridge $R_{J}$. There is a unique neighbour of $\bar t_{1}$ in $R_{F_{1}}$, say $\bar s_{k}$, while every other neighbour of $\bar t_{1}$ in $F_{1}$ is in $R$. Let $\bar X^{p}:=\pi_{R_{J}}^{J}(\bar X\setminus\{s_{1},\bar s_{k},\bar t_{k}\})$ and let $s_{1}^{pp}:=\pi_{R_{J}}^{J}(\pi_{R}^{F_{1}}(s_{1}))$. See \cref{fig:Aux-Linked-Thm}(c). The $d-1$ vertices in $\bar X^{p}\cup\{s_{1}^{pp}\}$ can be linked in $R_{J}$ (\cref{thm:cube}) by a linkage $\{\bar L_{1}',\ldots,\bar L_{k-1}'\}$. Observe that, for the special case of $d=5$ where $R_{J}$ is a 3-cube,  the sequence $s_{1}^{pp},  \pi_{R_{J}}^{J}(\bar s_{2}), \pi_{R_{J}}^{J}(\bar t_{1}),\pi_{R_{J}}^{J}(\bar t_{2})$ cannot be in a 2-face in cyclic order, since $\dist_{R_{J}}(s_{1}^{pp},\pi_{R_{J}}^{J}(\bar t_{1}))=3$. The linkage $\{\bar L_{1}',\ldots,\bar L_{k-1}'\}$ together with the two-path $\bar L_{k}:=\bar s_{k}\pi_{R_{F_{1}}}^{F_{1}}(\bar t_{k})\bar t_{k}$ can be extended to a linkage $\{\bar L_{1},\ldots, \bar L_{k}\}$ given by  
 
 \[\bar L_{i}:=\begin{cases}
 s_{1}\pi_{R}^{F_{1}}(s_{1})s_{1}^{pp}\bar L_{1}'\pi_{R_{J}}^{J}(\bar t_{1})\bar t_{1},& \text{for $i=1$;}\\ 
 \bar s_{i}\pi_{R_{J}}^{J}(\bar s_{i})\bar L_{i}'\pi_{R_{J}}^{J}(\bar t_{i})\bar t_{i},& \text{for $i\in [2,k-1]$;}\\
 \bar s_{k}\pi_{R_{F_{1}}}^{F_{1}}(\bar t_{k})\bar t_{k},& \text{for $i=k$.}
 \end{cases}\] 

Concatenating the paths $S_{i}$ ($i\in [2,k]$) and $T_{j}$ ($j\in [1,k]$) with the linkage $\{\bar L_{1},\ldots, \bar L_{k}\}$ gives the desired $Y$-linkage. This completes the proof of the case, and with it the proof of the theorem. 
\end{proof}
	 
\begin{figure}  
\includegraphics{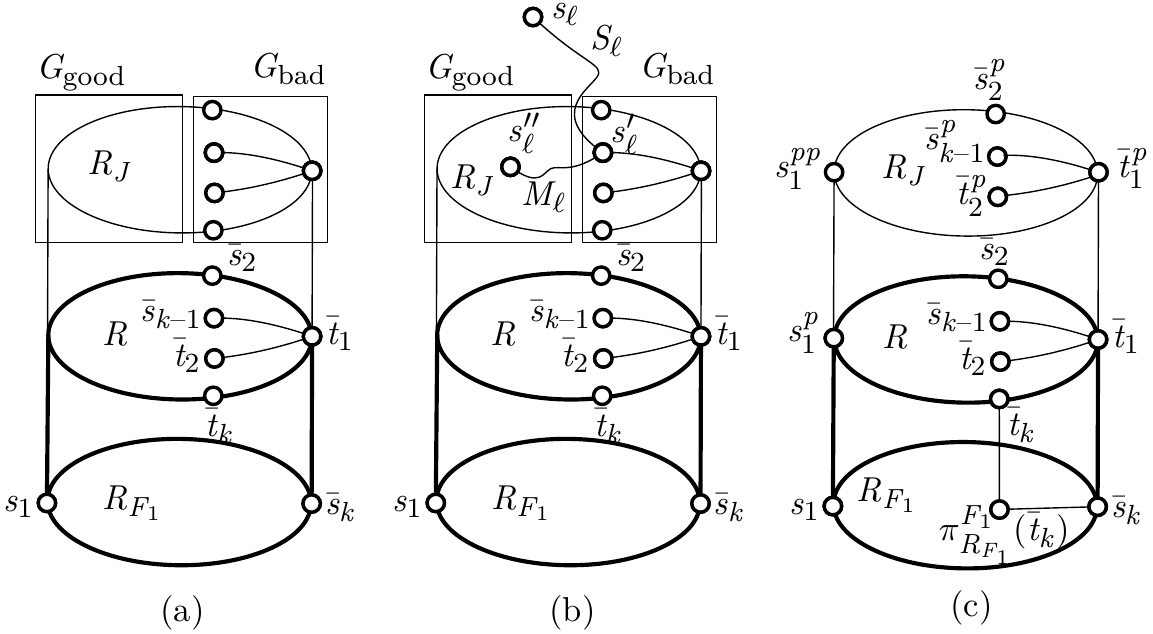}
\caption{Auxiliary figure for \cref{thm:cubical}, where the facet $F_{1}$ is highlighted in bold. {\bf (a)} A depiction of the subgraphs $G_{\text{good}}$ and $G_{\text{bad}}$ of $R_{J}$. {\bf (b)} A configuration where a path $S_{i}$ or $T_{j}$ touches $R_{J}$. {\bf (c)} A configuration where no path $S_{i}$ or $T_{j}$ touches $R_{J}$.}\label{fig:Aux-Linked-Thm} 
\end{figure}

\subsection{Proof of \cref{lem:star-cubical}}    
 \label{subsec:star-cubical}

This section is devoted to proving \cref{lem:star-cubical}.  Before starting the proof, we require a couple of results. 
   
\begin{proposition}\label{prop:star-minus-facet-linkedness} Let $F$ be a facet in the star $\St$  of a vertex in a cubical $d$-polytope. Then, for every $d\ge 2$, the antistar of $F$ in $\St$ is $\floor{(d-2)/2}$-linked. 
 \end{proposition}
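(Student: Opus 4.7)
The plan is to apply \cref{lem:k-linked-subgraph}: it suffices to show that $G(\A)$ is $2k$-connected and contains a $k$-linked subgraph, where $\A := \a-st(F,\St)$ and $k := \floor{(d-2)/2}$. The low-dimensional cases $d \in \{2,3\}$ give $k = 0$ and are vacuous, so I will assume $d \ge 4$.

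For the $2k$-connectivity of $G(\A)$, I would simply combine \cref{prop:star-minus-facet}, which asserts that $\A$ is a strongly connected $(d-2)$-complex, with \cref{prop:connected-complex-connectivity}: this yields $(d-2)$-connectivity of $G(\A)$, and since $2k \le d-2$ the required bound follows.

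For the subgraph, I would produce an explicit $(d-2)$-cube sitting inside $\A$. The strong connectivity of $\St$ (\cref{prop:st-ast-connected-complexes}) provides a facet $F' \ne F$ of $\St$ that meets $F$ in a ridge $R$ of $\St$, so $R$ is a $(d-2)$-face common to both $F$ and $F'$. Because $F$ and $F'$ are distinct facets of the ambient cubical polytope, $F \cap F'$ is a proper face of each and therefore has dimension at most $d-2$; combined with $R \subseteq F \cap F'$ and $\dim R = d-2$, this forces $F \cap F' = R$. Consequently the facet $R^o$ of $F'$ opposite to $R$ satisfies $V(R^o) \cap V(F) = \emptyset$, so $\C(R^o)$ is a subcomplex of $\A$ and $R^o$ is a $(d-2)$-cube. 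When $d \ne 5$, \cref{thm:cube} says that $R^o$ is $\floor{(d-1)/2}$-linked, hence $k$-linked, and \cref{lem:k-linked-subgraph} concludes. For the exceptional case $d = 5$ one has $k = 1$ and the $(d-2) = 3$-connectivity established above already makes $G(\A)$ connected with at least two vertices, hence trivially $1$-linked, which settles the proposition directly.

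The only delicate point is the dimension argument yielding $F \cap F' = R$, because this is precisely what ensures that the $(d-2)$-cube $R^o$ actually lands inside $\A$; everything else is a direct invocation of tools already assembled in the paper.
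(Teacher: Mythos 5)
Your proposal is correct and follows essentially the same route as the paper: both establish the $(d-2)$-connectivity of the antistar via \cref{prop:star-minus-facet,prop:connected-complex-connectivity}, locate a $(d-2)$-cube inside it as the face of an adjacent facet of $\St$ opposite to the shared ridge, and conclude with \cref{thm:cube,lem:k-linked-subgraph}. The only cosmetic differences are that the paper disposes of $d=4,5$ together by connectivity whereas you treat $d=4$ through the cube argument and isolate only $d=5$, and that you spell out explicitly why $F\cap F'=R$ and hence $V(R^{o})\cap V(F)=\emptyset$, a step the paper asserts without detail.
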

 \begin{proof} Let $\St$ be the star of a vertex $s$ in a cubical $d$-polytope and let $F$ be a facet in the star $\St$. Let  $\mathcal A$ denote the antistar of $F$ in $\St$. 
 
The case of $d=2,3$ imposes no demand on $\mathcal A$, while the case $d=4,5$ amounts to establishing that the graph of $\mathcal A$ is connected. The graph of $\A$ is in fact $(d-2)$-connected, since $\mathcal A$ is a strongly connected $(d-2)$-complex (\cref{prop:star-minus-facet}). See also \cref{prop:connected-complex-connectivity}. So assume $d\ge 6$. 
 
 There is a $(d-2)$-face $R$ in  $\A$. Indeed, take a $(d-2)$-face $R'$ in $F$ containing $s$ and consider the other facet $F'$ in $\St$ containing $R'$; the  $(d-2)$-face of $F'$ disjoint from $R'$ is the desired $R$.  By \cref{thm:cube} the ridge $R$ is $\floor{(d-1)/2}$-linked but we only require it to be $\floor{(d-2)/2}$-linked. By \cref{prop:connected-complex-connectivity,prop:star-minus-facet}  the graph of $\A$ is $(d-2)$-connected.  Combining the linkedness of $R$ and the connectivity of the graph of $\A$ settles the proposition by virtue of \cref{lem:k-linked-subgraph}. 
 \end{proof}

For a pair of opposite facets $\{F,F^{o}\}$ in a cube, the restriction of the projection $\pi_{F^{o}}:Q_{d}\to F^{o}$ (\cref{def:projection}) to $F$ is a bijection from $V(F)$ to $V(F^{o})$. With the help of $\pi$, given the star $\St$ of a vertex $s$ in a cubical polytope and a facet $F$ in  $\St$, we can define an injection from the vertices in $F$,  except  the vertex opposite to $s$, to the antistar of $F$ in $\St$. Defining this injection is the purpose of \cref{lem:projections-star}.

 \begin{lemma}\label{lem:projections-star} Let $F$ be a facet in the star $\St$  of a vertex $s$ in a cubical $d$-polytope. Then there is an injective function, defined on the vertices of $F$ except the vertex $s^{o}$ opposite to $s$, that maps each such vertex in $F$ to a neighbour in $V(\St)\setminus V(F)$.  
 \end{lemma}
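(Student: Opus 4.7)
The plan is to construct the injection explicitly via the cube projections into the facets of $\St$ that are adjacent to $F$, and then to verify injectivity by a short case analysis exploiting the cube structure of those facets.

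I would first identify $F$ with the standard $(d-1)$-cube $\{0,1\}^{d-1}$, placing $s$ at the origin and $s^{o}$ at the all-ones vertex. For each $i\in[1,d-1]$ I would let $R_i:=\{v\in V(F):v_i=0\}$, a facet of the cube $F$ and hence a ridge of $P$ through $s$, and let $F_i$ be the unique facet of $P$ other than $F$ containing $R_i$. Because $s\in R_i\subseteq F_i$, each $F_i$ belongs to $\St$; and because $F\cap F_i=R_i$, the set $V(F_i)\setminus V(R_i)$ sits inside $V(\St)\setminus V(F)$. I would use $\pi_i$ for the projection in the cube $F_i$ across the facet $R_i$ (in the sense of \cref{def:projection}), which is a bijection $V(R_i)\to V(F_i)\setminus V(R_i)$ sending each $v\in V(R_i)$ to its unique neighbour in $F_i\setminus R_i$.

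For each $v\in V(F)\setminus\{s^{o}\}$ the zero-set $Z(v):=\{i:v_i=0\}$ is nonempty, so I would set $i(v):=\min Z(v)$ and $\phi(v):=\pi_{i(v)}(v)$, producing a neighbour of $v$ in $V(\St)\setminus V(F)$. To verify injectivity I would assume $\phi(v)=\phi(v')=w$ for distinct $v,v'$, and set $i:=i(v)\le j:=i(v')$. The case $i=j$ is immediate from the bijectivity of $\pi_i$, so I would reduce to $i<j$; the minimality of $i(v')$ then forces $v'_i=1$, placing $v'$ outside $V(F_i)$. If additionally $v_j=0$, then $v\in R_j\subseteq F_j$, the edge $\{v,w\}$ is a face of the cube $F_j$, and $w\in V(F_j)\setminus V(R_j)$ has a unique neighbour in $R_j$, namely $\pi_j^{-1}(w)=v'$, which forces $v=v'$, a contradiction.

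The remaining subcase $v_j=1$ is where I expect the main obstacle to lie. Here the edges $\{v,w\}\subseteq F_i$ and $\{v',w\}\subseteq F_j$ live in distinct facets of $\St$ adjacent to $F$, with $v\notin V(F_j)$ and $v'\notin V(F_i)$. I would aim to rule this out via a 2-face incidence argument: any 2-face $Q$ of $P$ containing both edges must satisfy $V(Q)\cap V(F)=\{v,v'\}$, with $v$ and $v'$ diagonally opposite in the quadrilateral $Q$; yet no face of $Q$ has this diagonal pair as its vertex set, so such a $Q$ cannot exist. The challenge will be to show that such a 2-face nevertheless must exist in the cubical polytope, which likely uses the incidence of 2-faces and facets at the common neighbour $w$, or else a more refined choice of $i(v)$ tailored to cubical structure.
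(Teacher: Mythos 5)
Your construction is exactly the one in the paper: the paper likewise takes the ridges $R_{1},\ldots,R_{d-1}$ of $F$ through $s$, the second facets $J_{i}\supseteq R_{i}$ of the star (your $F_{i}$), and sends each $v$ to its projection across $R_{i}$ inside $J_{i}$ for the least $i$ with $v\in R_{i}$. The difference is that the paper disposes of your troublesome subcase by observing that the image of a vertex $v$ with $i(v)=j$ lies in $V(R_{j}^{o})\setminus\bigl(V(R_{1}^{o})\cup\cdots\cup V(R_{j-1}^{o})\bigr)$, whereas you leave the subcase $v_{j}=1$ genuinely open: you only sketch a route through a quadrilateral $Q$ containing the two edges $vw$ and $v'w$, and you concede you do not know how to produce $Q$. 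That concession is the gap. Two edges of a polytope sharing a vertex need not lie in a common $2$-face (this corresponds to two vertices of the vertex figure at $w$ failing to be adjacent, which happens already at a nonsimple vertex of a $4$-polytope), so $Q$ cannot simply be invoked. In this configuration $Q$ does exist, but only because both edges lie in the single cube $F_{i}$ --- and proving \emph{that} is tantamount to the fact you are missing, so the proposed detour is circular.

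The subcase is in fact vacuous, and the argument is short. Suppose $\phi(v)=\phi(v')=w$ with $i:=i(v)<j:=i(v')$. Then $w\in V(F_{i})\cap V(F_{j})$ and $s\in V(R_{i})\cap V(R_{j})\subseteq V(F_{i})\cap V(F_{j})$, so the face $F_{i}\cap F_{j}$ of the cube $F_{j}$ contains both $s$ and $w$, and hence contains the smallest face of $F_{j}$ spanned by $s$ and $w$. Since $s,v'\in R_{j}$ and $w=\pi_{j}(v')\in V(F_{j})\setminus V(R_{j})$, we have $\dist_{F_{j}}(s,w)=\dist_{F_{j}}(s,v')+1$, so $v'$ lies on a geodesic from $s$ to $w$ in $F_{j}$ and therefore lies in that smallest face. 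Hence $v'\in F_{i}$, so $v'\in F\cap F_{i}=R_{i}$, that is, $v'_{i}=0$, contradicting $i<j=\min Z(v')$. (The symmetric argument inside $F_{i}$ gives $v\in R_{j}$, which is precisely the paper's parenthetical observation and also makes your $v_{j}=0$ subcase the only one.) With this inserted, your case analysis closes and the proof is complete.
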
  
\begin{proof} We construct the aforementioned injection $f$ between $V(F)\setminus \{s^{o}\}$ and $V(\St)\setminus V(F)$ as follows. Let $R_{1},\ldots, R_{d-1}$  be the $(d-2)$-faces of $F$ containing $s$, and let $J_{1},\ldots,J_{d-1}$ be the other facets of $\St$ containing $R_{1},\ldots, R_{d-1}$, respectively. Every vertex in $F$ other than $s^{o}$ lies in $R_{1}\cup\cdots\cup R_{d-1}$. Let $R_{i}^{o}$ be the $(d-2)$-face in $J_{i}$ that is opposite to $R_{i}$ for $i\in[1,d-1]$. For every vertex $v$ in $V(R_{j})\setminus (V(R_{1})\cup \cdots \cup V(R_{j-1}))$ define  $f(v)$ as the projection $\pi$ in $J_{j}$ of $v$ onto $V(R_{j}^{o})$, namely $f(v):=\pi_{R^{o}_j}(v)$; observe that  $\pi_{R^{o}_j}(v)\in V(R_{j}^{o})\setminus (V(R_{1}^{o})\cup \cdots \cup V(R_{j-1}^{o}))$. Here $R_{-1}$ and $R_{-1}^{o}$ are empty sets. The function $f$ is well defined as $R_{i}$ and $R_{i}^{o}$ are opposite  $(d-2)$-cubes in the $(d-1)$-cube $J_{i}$.

To see that $f$ is an injection, take distinct vertices $v_{1},v_{2}\in V(F)\setminus \{s^{o}\}$, where $v_{1}\in V(R_{i})\setminus (V(R_{1})\cup \cdots \cup V(R_{i-1}))$ and $v_{2}\in V(R_{j})\setminus (V(R_{1})\cup \cdots \cup V(R_{j-1}))$ for $i\le j$. If $i=j$ then $f(v_{1})=\pi_{R^{o}_{i}}(v_{1})\ne \pi_{R^{o}_{i}}(v_{2})=f(v_{2})$. If instead $i< j$ then $f(v_{1})\in V(R_{i}^{o})\subseteq  V(R_{1}^{o})\cup \cdots \cup V(R_{j-1}^{o})$, while $f(v_{2})\not \in V(R_{1}^{o})\cup \cdots \cup V(R_{j-1}^{o})$. \end{proof}

\begin{proof}[Proof of \cref{lem:star-cubical}]

Let $d\ge 5$ be odd and let $k:=(d+1)/2$.  Let $s_{1}$ be a vertex in a cubical $d$-polytope $P$ and let $\St_{1}$ denote the star of $s_{1}$ in $\B(P)$. Let $X$ be any set of $2k$ vertices in the graph $G(\St_{1})$ of  $\St_{1}$. The vertices in $X$ are our terminals. Also let $Y:=\{\{s_{1},t_{1}\},\ldots,\{s_{k},t_{k}\}\}$ be a labelling and pairing of the vertices of $X$. We aim to find a $Y$-linkage $\{L_{1},\ldots,L_{k}\}$ in $G$ where $L_{i}$ joins the pair $\{s_{i},t_{i}\}$ for $i=1,\ldots,k$. Recall that a path is $X$-valid if it contains no inner vertex from $X$.

We consider a facet $F_{1}$ of $\St_1$ containing $t_{1}$ and having the largest possible number of terminals.

The necessary condition of $Y$ being linked in $\St_{1}$ is  easy to prove. Suppose that the vertex $s_{1}$ is in Configuration $d$F. Since $\dist_{F_{1}}(s_{1},t_{1})=d-1$, it follows that $F_{1}$ is the only facet of $\St_{1}$ that contains $t_{1}$. Then all the neighbours of $t_{1}$ in $F_{1}$, and thus, in $\St_{1}$ are in $X$. As a consequence, every $s_{1}-t_{1}$ path in $\St_{1}$ must touch $X$. Hence $Y$ is not linked.  

We decompose the sufficiency proof into four cases based on the number of terminals in $F_{1}$, proceeding from the more manageable case to the more involved one. 
\begin{enumerate}

\item[\cref{case:new-linkedness-thm-1}.] $|X\cap V(F_{1})|=d$.
\item[\cref{case:new-linkedness-thm-2}.] $3\le |X\cap V(F_{1})|\le d-1$.
\item[\cref{case:new-linkedness-thm-3}.] $|X\cap V(F_{1})|=2$ .
\item[\cref{case:new-linkedness-thm-4}.] $|X\cap V(F_{1})|=d+1$ and the vertex $s_{1}$ is not in Configuration $d$F.  
 \end{enumerate}  

The sufficiency proof of \cref{lem:star-cubical} is long, so we outline the main ideas. We let $\A_1$ be the antistar of $F_{1}$ in $\St_1$ and let $\Lk_1$ be the link of $s_{1}$ in $F_{1}$. Using the $(k-1)$-linkedness of $F_{1}$ (\cref{thm:cube}), we link as many pairs of terminals in $F_{1}$ as possible through  disjoint $X$-valid paths $L_{i}:=s_{i}-t_{i}$. For those terminals that cannot be linked in $F_{1}$, if possible we use  the injection from $V(F_{1})$ to $V(\A_{1})$ granted by \cref{lem:projections-star} to find a set $N_{\A_{1}}$ of pairwise distinct neighbours in $\A_{1}$ not in $ X$. Then, using the $(k-2)$-linkedness of $\A_{1}$ (\cref{prop:star-minus-facet-linkedness}), we link the corresponding pairs of terminals in $\A_{1}$ and vertices in $N_{\A_{1}}$  accordingly.  This general scheme does not always work, as the vertex $s_{1}^{o}$ opposite to $s_{1}$ in $F_{1}$ does not have an image in $\A_{1}$ under the aforementioned injection or the image of a vertex in $F_{1}$ under the injection may be a terminal. In those scenarios we resort to ad hoc methods, including linking corresponding pairs in the link of $s_{1}$ in $F_{1}$, which is $(k-1)$-linked by \cref{prop:link-cubical} and does not contain $s_{1}$ or $s_{1}^{o}$, or linking corresponding pairs in ridges disjoint from $F_{1}$, which are $(k-1)$-linked by \cref{thm:cube}.
 
To aid the reader, each case is broken down into subcases highlighted in bold.  

Recall that, given a pair $\{F,F^{o}\}$ of opposite facets in a cube $Q$,  for every vertex $z\in V (F)$ we denote by $z^{p}_{F^o}$ or  $\pi_{F^o}^{Q}(z)$ the unique neighbour of $z$ in $F^{o}$.

\begin{case}\label{case:new-linkedness-thm-1}  $|X\cap V(F_{1})|= d$.\end{case}
  
Without loss of generality,  assume that $  t_{2}\not \in V(F_{1})$. 

{\bf Suppose first that $\dist_{F_{1}} (  s_{2},s_{1})<d-1$}.  There exists a neighbour $  s_{2}'$ of $  s_{2}$ in $\A_1$. With the use of the strong $(k-1)$-linkedness of $F_{1}$ (\cref{thm:cube-strong-linkedness}), find disjoint paths $  L_{1}:=s_{1}-  t_{1}$ and $  L_{i}:=  s_{i}-  t_{i}$ ($i\in [3,k]$) in $F_{1}$, each avoiding $  s_{2}$. Find a path $  L_{2}$ in $\St_{1}$ between $  s_{2}$ and $  t_{2}$ that consists of the edge $  s_{2}  s_{2}'$ and a subpath in $\A_1$ between $  s_{2}'$ and $  t_{2}$, using the connectivity of $\A_1$ (see~\cref{prop:star-minus-facet}). The paths $  L_{i}$ ($i\in [1,k]$) give the desired $Y$-linkage. 

{\bf Now assume $\dist_{F_{1}}(  s_{2},s_{1})=d-1$}. Since $2k-1=d$ and there are $d-1$ pairs of opposite $(d-2)$-faces in $F_{1}$, by  \cref{lem:facets-association} there exists a pair $\{R,R^o\}$ of opposite ridges of $F_{1}$ that is not associated with the set $  X_{s_{2}}:=(  X\cap V(F_{1}))\setminus \{  s_{2}\}$, whose cardinality is $d-1$. Assume $  s_{2}\in R$. Then $s_{1}\in R^{o}$.

Suppose all the neighbours of $  s_{2}$ in $R$ are in $   X$; that is, $N_{R}(  s_{2})=  X\setminus\{s_{1},  s_{2},  t_{2}\}$. The projection $\pi_{R^{o}}^{F_{1}}(  s_{2})$ of $  s_{2}$ onto $R^{o}$ is not in $  X$ since $s_{1}$ is the only terminal in $R^{o}$ and $\dist_{F_{1}}(  s_{2},s_{1})=d-1\ge 2$. Next find disjoint paths $  L_{i}:=  s_{i}-  t_{i}$ for $i\in [3,k]$ in $R$ that do not touch $  s_{2}$ or $  t_{1}$, using the $(k-1)$-linkedness of $R$ if $d\ge 7$ (\cref{lem:k-linked-def}) or the 3-connectivity of $R$ if $d=5$. With the help of \cref{lem:projections-star}, find a  neighbour   $s_{2}'$ of $\pi_{R^{o}}^{F_{1}}(  s_{2})$ in $\A_1$, and with the connectivity of $\A_1$, a path $  L_{2}$ between $  s_{2}$ and $  t_{2}$  that consists of the length-two path $  s_{2}\pi_{R^{o}}^{F_{1}}(  s_{2})s_{2}'$ and a subpath in $\A_1$ between  $s_{2}'$ and $  t_{2}$. Finally, find a path $  L_{1}$ in $F_{1}$ between $s_{1}$ and $  t_{1}$ that consists of the edge $  t_{1}\pi_{R^{o}}^{F_{1}}(  t_{1})$ and  a subpath in $R^{o}$ disjoint from $\pi_{R^{o}}^{F_{1}}(  s_{2})$  (here use the 2-connectivity of $R^{o}$).  The paths $  L_{i}$ ($i\in [1,k]$)  give the desired $Y$-linkage.
 
Thus assume there exists a neighbour $\bar  s_{2}$ of $  s_{2}$ in $V(R)\setminus   X$.  Let $ X_{R^{o}}:=\pi_{R^{o}}^{F_{1}}( X\setminus \{ s_{2}, t_{2}\})$. Find a path $ L_{2}$ between $ t_{2}$ and $ s_{2}$ that consists of the edge $ s_{2} \bar  s_{2}$ and a subpath in $\A_1$ between $ t_{2}$ and a neighbour $ s_{2}'$ of $\bar  s_{2}$ in $\A_1$. 

Let $d\ge 7$. Find disjoint paths $ L_{i}:=\pi_{R^{o}}^{F_{1}}( s_{i})-\pi_{R^{o}}^{F_{1}}( t_{i})$ ($i\in [1,k]$ and $i\ne 2$) in $R^{o}$ linking the $d-1$ vertices in $ X_{R^{o}}$ using the $(k-1)$-linkedness of $R^{o}$; add the  edge $\pi_{R^{o}}^{F_{1}}( t_{i}) t_{i}$ to $ L_{i}$ if $ t_{i}\in R$ or the  edge $\pi_{R^{o}}^{F_{1}}( s_{i}) s_{i}$ to $ L_{i}$ if $ s_{i}\in R$. The disjoint paths $ L_{i}$ ($i\in [1,k]$) gives the desired $Y$-linkage. 
 
Let $d=5$. If the sequence $s_{1},\pi_{R^{o}}^{F_{1}}( s_{3}), \pi_{R^{o}}^{F_{1}}( t_{1}),\pi_{R^{o}}^{F_{1}}( t_{3})$ in $ X_{R^{o}}$ is not in a 2-face of $R^{o}$ in cyclic order, then the same reasoning as in the case of $d\ge 7$ applies. Thus assume otherwise. This in turn implies that $\pi_{R}^{F_{1}}( s_{3})\not\in\{ s_{2}, s_{2}'\}$ and $\pi_{R}^{F_{1}}( t_{3})\not\in\{ s_{2}, s_{2}'\}$, since $\dist_{F_{1}}(s_{1}, s_{2})=4$.  

Find a path $ L_{3}'$ in $R$  between $\pi_{R}^{F_{1}}( s_{3})$ and $\pi_{R}^{F_{1}}( t_{3})$ such that $ L_{3}'$ is disjoint from both $ s_{2}$ and $ s_{2}'$ and disjoint from $ t_{1}$ if $ t_{1}\in R$; here  use \cref{cor:separator-independent}, which ensures that the vertices $ s_{2}$, $ s_{2}'$ and $ t_{1}$, if they are all in $R$, cannot separate $\pi_{R}^{F_{1}}( s_{3})$ from $\pi_{R}^{F_{1}}( t_{3})$ in $R$,  since a separator of size three in $R$ must be an independent set. Extend the path $ L_{3}'$ in $R$ to a path $ L_{3}:= s_{3}\pi_{R}^{F_{1}}(s_{3})L_{3}'\pi_{R}^{F_{1}}(t_{3})t_{3}$ in $F_{1}$, if necessary. Find a path $ L_{1}':=s_{1}-\pi_{R^{o}}^{F_{1}}( t_{1})$ in $R^{o}$ disjoint from $\pi_{R^{o}}^{F_{1}}(s_{3})$ and $\pi_{R^{o}}^{F_{1}}(t_{3})$, using the 3-connectivity of $R^{o}$. Extend $L_{1}'$ to a path $L_{1}:=s_{1}L_{1}'\pi_{R^{o}}^{F_{1}}(t_{1})t_{1}$ in $F_{1}$, if necessary. The linkage $\{ L_{1}, L_{2},  L_{3}\}$  is a $Y$-linkage.  This completes the proof of \cref{case:new-linkedness-thm-1}.

\begin{case}\label{case:new-linkedness-thm-2}  $3\le | X\cap V(F_{1})|\le d-1$.\end{case}
 Since $2k-1=d$ and there are $d-1$ pairs of opposite facets in $F_{1}$, by \cref{lem:facets-association} there exists a pair $\{R,R^o\}$ of opposite ridges of $F_{1}$ that is not associated with $ X\cap V(F_{1})$. Assume $s_{1}\in R$. We consider two subcases according to whether $ t_{1}\in R$ or $ t_{1}\in R^{o}$.

{\bf Suppose first that $ t_{1}\in R$}.  The $(d-2)$-connectivity of $R$ ensures the existence of an $X$-valid path $ L_{1}:=s_{1}- t_{1}$ in $R$. Let 
\[ X_{R^o}:=\pi_{R^{o}}^{F_{1}}(( X\setminus \{s_{1}, t_{1}\})\cap V(F_{1})).\] Then  $1\le | X_{R^o}|\le d-3$. Let $s_{1}^{o}$ be the vertex opposite to $s_{1}$ in $F_{1}$; the vertex $s_{1}^{o}$ has no neighbour in $\A_{1}$. 

Let $\bar Z$ be a set of $|V(\A_{1})\cap  X|$ distinct vertices in $V(R^o)\setminus ( X_{R^o}\cup \{s_{1}^{o}\})$. Use \cref{lem:projections-star} to obtain a set $Z$ in $\A_1$ of $|\bar Z|$ distinct vertices  adjacent to vertices in $\bar Z$. Then $|Z|=|V(\A_{1})\cap  X|\le d-2$. To see that $|\bar Z|\le |V(R^o)\setminus ( X_{R^o}\cup \{s_{1}^{o}\})|$, observe that, for $d\ge 5$ and   $| X_{R^o}|\le d-3$, we get \[|V(R^o)\setminus ( X_{R^o}\cup \{s_{1}^{o}\})|\ge 2^{d-2}-(d-3)-1\ge d-2\ge |\bar Z|=|Z|.\]

Using the $(d-2)$-connectivity of $\A_1$ (\cref{prop:star-minus-facet}) and Menger's theorem, find disjoint paths $\bar S_{i}$ and $\bar T_{j}$ ($i,j\ne1$) in $\A_{1}$ between $V(\A_{1})\cap  X$ and $Z$. Then produce disjoint paths $S_{i}$ and $T_{j}$ ($i,j\ne1$) from terminals $s_{i}$ and $t_{j}$ in $\A_1$, respectively,  to $R^o$ by adding edges $z_{\ell}\bar z_{\ell}$ with $z_{\ell}\in Z$ and $\bar z_{\ell}\in \bar Z$ to the corresponding paths $\bar S_{i}$ and $\bar T_{j}$. If $ s_{i}$ or $ t_{j}$ is already in $R^o$, let $S_{i}:= s_{i}$ or $ T_{j}:= t_{j}$, accordingly. If instead $ s_{i}$ or $ t_{j}$ is in $R$, let $ S_{i}$ be the edge $ s_{i}\pi_{R^o}^{F_{1}}( s_{i})$ or let $ T_{j}$ be the edge $ t_{j}\pi_{R^o}^{F_{1}}( t_{j})$.  It follows that the paths $S_{i}$ and $ T_{i}$ for $i\in [2,k]$ are all pairwise disjoint. Let $ X_{R^o}^{+}$ be the intersections of $R^o$ and the paths $ S_{i}$ and $ T_{j}$ ($i,j\ne 1$). Then $| X_{R^o}^{+}|=d-1$.  Suppose that $X^{+}_{R^{o}}=\left\{\bar s_{2},\bar t_{2},\ldots, \bar s_{k},\bar t_{k}\right\}$. The corresponding pairing  $ Y_{R^o}^{+}$ of the vertices in $X_{R^o}^{+}$ can be linked through paths $\bar L_{i}:=\bar s_{i}-\bar t_{i}$ ($i\in [2,k]$) in $R^{o}$ using the $(k-1)$-linkedness of $R^o$ (\cref{thm:cube}). See \cref{fig:Aux-Linked-Thm-NewCase2}(a) for a depiction of this configuration. In this case, the desired $Y$-linkage is given by the following paths.
 
 \[L_{i}:=\begin{cases}s_{1} L_{1} t_{1},& \text{for $i=1$;}\\s_{i}S_{i}\bar s_{i}\bar L_{i}\bar t_{i} T_{i} t_{i},& \text{otherwise.}
 \end{cases}\]   
    
\begin{figure}
\includegraphics{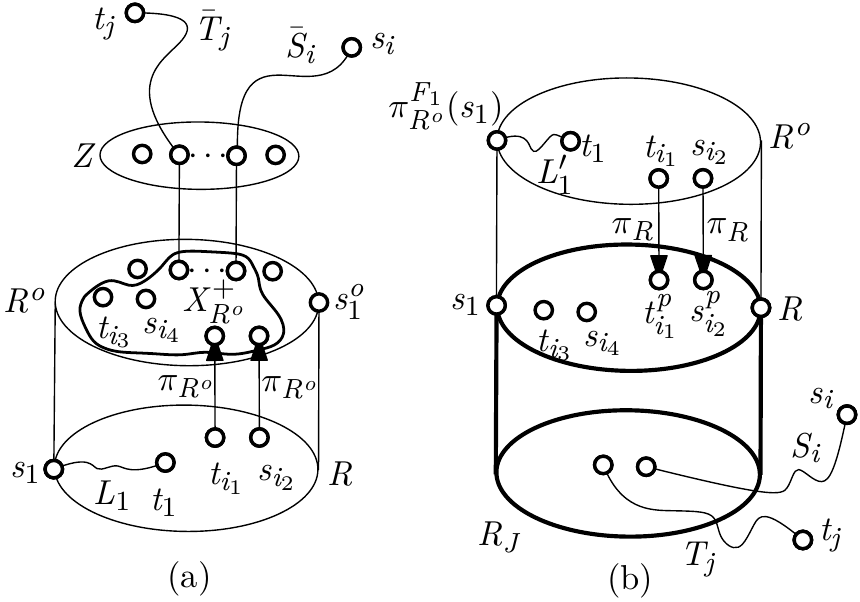}
\caption{Auxiliary figure for \cref{case:new-linkedness-thm-2} of \cref{lem:star-cubical}. {\bf (a)} A configuration where $ t_{1}\in R$  and the subset $ X^{+}_{R^{o}}$ of $R^{o}$ is highlighted in bold. {\bf (b)} A configuration where $ t_{1}\in R^{o}$  and the facet $J$ is highlighted in bold. }\label{fig:Aux-Linked-Thm-NewCase2} 
\end{figure}

Some comments for $d=5$ are in order. By virtue of \cref{prop:3-polytopes}, we need to make sure that the sequence $\bar s_{2},\bar s_{3},\bar t_{2},\bar t_{3}$ in $ X^{+}_{R^{o}}$ is not in a 2-face of $R^{o}$ in cyclic order.  To ensure this, we need to be a bit more careful when selecting the vertices in $\bar Z$. Indeed, if there are already two vertices in $ X_{R^{o}}$ at distance three in $R^{o}$, no care is needed when selecting $\bar Z$, so proceed as in the case of $d\ge 7$. Otherwise, pick a vertex $\bar z \in \bar Z\subseteq V(R^{o})\setminus ( X_{R^{o}}\cup \{s_{1}^{o}\})$ such that $\bar z$ is the unique vertex in $R^{o}$ with $\dist_{R^{o}}(\bar z,x)=3$ for some vertex $x\in  X_{R^{o}}$; this vertex $x$ exists because $| X\cap V(F_{1})|\ge 3$. Selecting such a $\bar z\ne s_{1}^{o}$ is always possible because $s_{1}^{o}$ is not at distance three in $R^{o}$ from {\it any} vertex in $ X_{R^{o}}$: the unique vertex in $R^{o}$ at distance three from $s_{1}^{o}$ is $\pi_{R^{o}}^{F_{1}}(s_{1})$, and $\pi_{R^{o}}^{F_{1}}(s_{1})\not\in  X$ because the pair $\{R,R^{o}\}$ is not associated with $ X\cap V(F_{1})$. Once $\bar z$ is selected, the set $Z$ will contain a neighbour $z$ of $\bar z$. In this way,  some path $ S_{i}$ or $ T_{j}$ bringing terminals $ s_{i}$ or $ t_{j}$ in $\A_{1}$  into $R^{o}$ through $Z$ would use the vertex $z$, thereby ensuring that $x$ and $\bar z$ would be both in  $X^{+}_{R^{o}}$. This will cause the   the sequence $\bar s_{2},\bar s_{3},\bar t_{2},\bar t_{3}$ not to be in a 2-face, and thus, not in cyclic order.  
   
{\bf Suppose now that $ t_{1}\in R^o$}.  Let \[ X_{R}:=\pi_{R}^{F_{1}}(( X\setminus\{ t_{1}\})\cap V(F_{1})).\] There are at most $d-2$ terminal vertices in $R^o$. Therefore, the $(d-2)$-connectivity of $R^o$ ensures the existence of an $ X$-valid $\pi_{R^{o}}^{F_{1}}(s_{1})- t_{1}$ path $\bar L_{1}$ in $R^o$. Then let $ L_{1}:=s_{1}\pi_{R^{o}}^{F_{1}}(s_{1})\bar L_{1} t_{1}$.  Let  $J$ be the other facet in $\St_1$ containing $R$ and let $R_{J}$ be the $(d-2)$-face of $J$ disjoint from $R$. Then $R_{J}\subset \A_{1}$. Since there are at most $d-2$ terminals in $\A_1$ and since $\A_1$ is $(d-2)$-connected (\cref{prop:star-minus-facet}), we can find corresponding disjoint paths $ S_{i}$ and $ T_{j}$ bringing the terminals in $\A_{1}$ to $R_{J}$ (\cref{thm:Menger-consequence}).  For terminals $ s_{i}$ and $ t_{j}$ in $ X\cap V(R)$, let $ S_{i}:= s_{i}$ and  $ T_{j}:= t_{j}$ for $i,j\ne 1$, while for terminals $ s_{i}$ and $ t_{j}$ in $ X\cap V(R^o)$, let $ S_{i}:= s_{i}\pi_{R}^{F_{1}}( s_{i})$ and $ T_{j}:= t_{j}\pi_{R}^{F_{1}}( t_{j})$ for $i,j\ne 1$. Let $ X_{J}$ be the set of the intersections of the paths $S_{i}$ and $ T_{j}$ with $J$ plus the vertex $s_{1}$. Then $ X_{J}\subset V(J)$ and $| X_{J}|=d$ (since $ t_{1}\ \in R^{o}$). Suppose that $X_{J}=\left\{s_{1},\bar s_{2},\bar t_{2},\ldots, \bar s_{k},\bar t_{k}\right\}$ and let $Y_{J}=\left\{\left\{\bar s_{2},\bar t_{2}\right\},\ldots, \left\{\bar s_{k},\bar t_{k}\right\}\right\}$ be a pairing of $X_{J}\setminus\left\{s_{1}\right\}$. 

Resorting to the strong $(k-1)$-linkedness of the facet $J$ (\cref{thm:cube-strong-linkedness}), we obtain $k-1$ disjoint paths $\bar L_{i}:=\bar s_{i}-\bar t_{i}$ for $i\ne 1$ that correspondingly link $Y_{J}$ in $J$, with all the paths avoiding $s_{1}$. See \cref{fig:Aux-Linked-Thm-NewCase2}(b) for a depiction of this configuration. In this case, the desired $Y$-linkage is given by the following paths.
\[L_{i}:=\begin{cases}s_{1}L_{1} t_{1},& \text{for $i=1$;}\\s_{i}S_{i} \bar L_{i} T_{i} t_{i},& \text{otherwise.}
 \end{cases}\] 	  
  
\begin{case}\label{case:new-linkedness-thm-3}  $| X\cap V(F_{1})|=2$.\end{case}
  
In this case, we have that $|V(\A_1)\cap  X|=d-1$. The proof of this case requires the definition of several sets. For quick reference and ease of readability, we place most of these definitions in itemised lists. We begin with the following sets:

\begin{itemize}
\item $\St_{12}$, the star of $ s_{2}$ in $\St_{1}$ (that is, the complex formed by the facets of $P$ containing $s_{1}$ and $ s_{2}$);
\item $G(\St_{12})$, the graph of $\St_{12}$; and 
\item $\Gamma_{12}$, the subgraph of $G(\St_{12})$ and $G(\A_1)$ that is induced by  $V(\St_{12})\setminus V(F_{1})$.
\end{itemize}
It follows that every neighbour in $G(\A_{1})$ of $s_{2}$ is in $\Gamma_{12}$; in other words, the set of neighbours of $s_{2}$ in each subgraph is the same:
\begin{equation}\label{eq:neighbourhood}
	N_{\Gamma_{12}}(s_{2})=N_{G(\A_{1})}(s_{2}). 
\end{equation}

{\bf The first step for this case is to bring the terminals in  $\A_{1}$ into $\Gamma_{12}$.} Denote by $ S_{i}$ an $ X$-valid path in $\A_1$ from the terminal $ s_{i}\in \A_1$ to  $\Gamma_{12}$. Let $V(S_{i})\cap V(\Gamma_{12})=\left\{\hat s_{i}\right\}$. Similarly, define $ T_{j}$ and $\hat t_{j}$.  The existence of these $d-2$ pairwise disjoint $ X$-valid paths $ S_{i}$ and $ T_{j}$ is ensured  by the $(d-2)$-connectivity of the graph $G(\A_1)$ of $\A_1$, which in turn is guaranteed by \cref{prop:star-minus-facet}.  By \eqref{eq:neighbourhood} each path $ S_{i}$ or $ T_{j}$ touches $\Gamma_{12}$ at a vertex other than $ s_{2}$; this is so because each such path will need to reach the neighbourhood of $s_{2}$ in $\Gamma_{12}$ before reaching $s_{2}$.  Every terminal vertex $ x$ already in $\Gamma_{12}$ is also denoted by $\hat x$, and the corresponding path $ S_{i}$ or $ T_{j}$ consists only of the vertex $\hat x$. We also let $\hat s_{2}$ denote $ s_{2}$. The set of vertices $\hat x$ is accordingly denoted by $\hat X$. Then $|\hat X|=d-1$. Abusing terminology, since there is no potential for confusion, we  call the vertices in $\hat X$ terminals as well. \Cref{fig:Aux-Linked-Thm-Case4}(a) depicts this configuration. 
   
Pick a facet   
\begin{itemize} 
\item $F_{12}$ in $\St_{12}$ that contains $\hat t_{2}$.
\end{itemize}
An important point is that $ t_{1}$ is not in $F_{12}$; otherwise $F_{12}$ would contain $s_{1}$,$s_{2}$ and  $t_{1}$, and it should have been chosen instead of $F_{1}$.   
 
{\bf The second step is to find a path $L_{1}$ in $F_{1}$ between $s_{1}$ and $ t_{1}$ such that $V(L_{1})\cap V(F_{12})=\left\{s_{1}\right\}$.}
 
 To see the existence of such a path, note that the intersection of $F_{12}$ and $F_{1}$ is at most a $(d-2)$-face containing $s_{1}$ (but not $t_{1}$), which is contained in a $(d-2)$-face $R$  of $F_{1}$ containing $s_{1}$ but not $t_{1}$ (\cref{rmk:two-vertices}). Find a path $ L_{1}'$ in $R^{o}$, the ridge of $F_{1}$ disjoint from $R$ and containing $t_{1}$, between $\pi_{R^{o}}^{F_{1}}(s_{1})$ and $ t_{1}$ and let $ L_{1}:=s_{1}\pi_{R^{o}}^{F_{1}}(s_{1}) L_{1}' t_{1}$.
 
{\bf The third step is to bring the $d-1$ terminal vertices $\hat x\in \Gamma_{12}$ into the facet $F_{12}$ so that they can be linked there, avoiding $s_{1}$.}  We consider two cases depending on the number of facets in $\St_{12}$.
  
\begin{figure} 
\includegraphics[scale=.9]{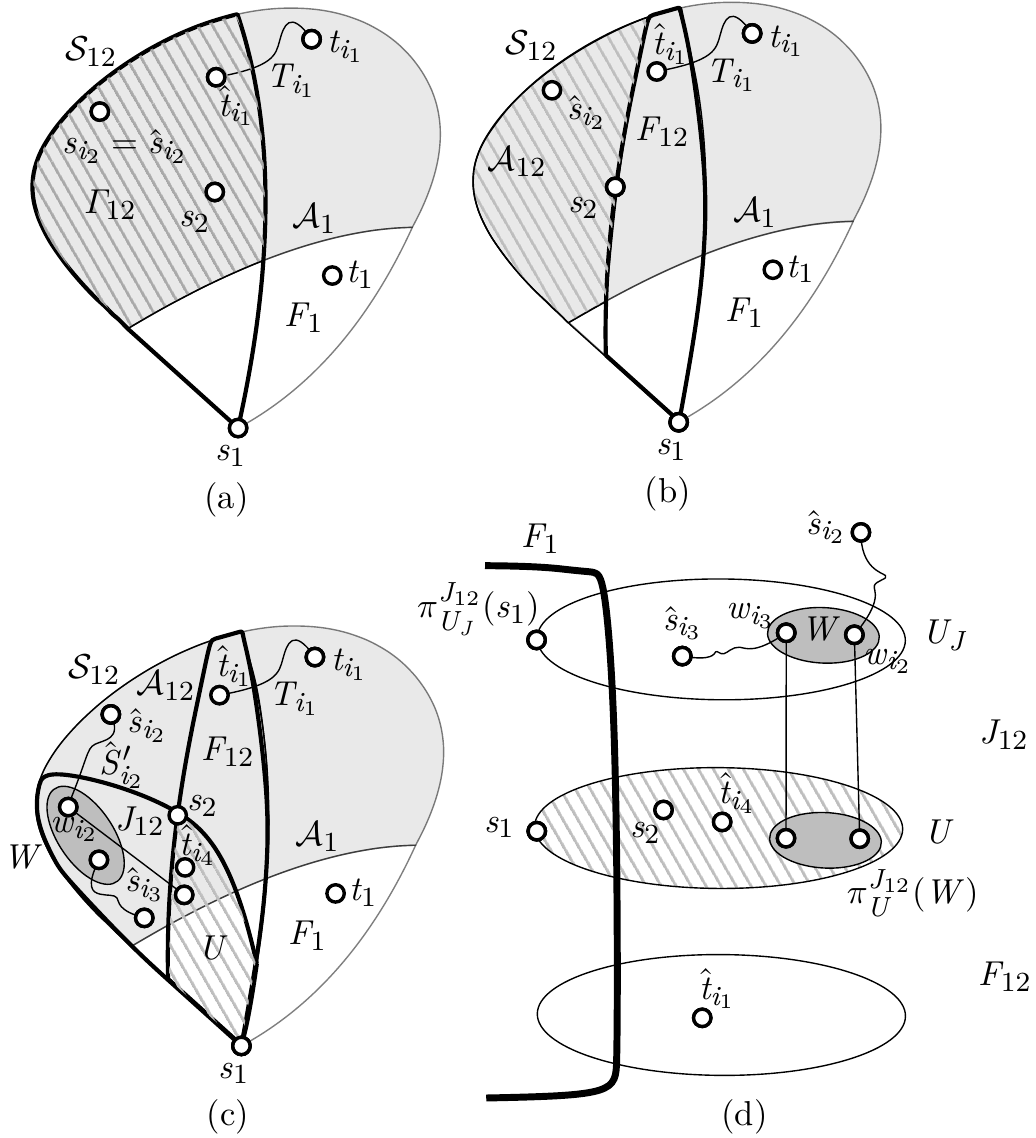}
\caption{Auxiliary figure for \cref{case:new-linkedness-thm-3} of \cref{lem:star-cubical}. A representation of $\St_{1}$. (a) A configuration where the subgraph $\Gamma_{12}$ is tiled in  falling pattern and the complex $\A_{1}$ is coloured in grey. (b) A depiction of $\St_{12}$ with more than one facet; the facet $F_{12}$ is highlighted in bold, the complex $\A_{1}$ is coloured in grey and the complex $\A_{12}$ is highlighted in falling pattern. (c)  A depiction of $\St_{12}$ with more than one facet; the facets  $F_{12}$ and $J_{12}$ are highlighted in bold and their intersection $U$ is highlighted in falling pattern; the set $W$ in $J_{12}$ is coloured in dark grey. (d) A depiction of a portion of $\St_{12}$, zooming in on the facets  $F_{12}$ and $J_{12}$; each facet is represented as the convex hull of two disjoint $(d-2)$-faces, and their intersection $U$  is highlighted in falling pattern. The sets $W$ and $\pi_{U}^{J_{12}}(W)$ in $J_{12}$ are coloured in dark grey.}\label{fig:Aux-Linked-Thm-Case4} 
\end{figure}
   
{\bf Suppose $\St_{12}$ only consists of $F_{12}$}. Then \[\hat X=\{\hat s_{2},\ldots,\hat s_{k},\hat t_{2},\ldots,\hat t_{k}\}\subset V(\Gamma_{12})\subset V(F_{12}).\] With the help of the strong $(k-1)$-linkedness of $F_{12}$ (\cref{thm:cube-strong-linkedness}), we can link the pairs $\{\hat s_{i},\hat t_{i}\}$ for $i\in [2,k]$ in $F_{12}$ through disjoint paths $\hat L_{i}$, all avoiding $s_{1}$.  The paths $\hat L_{i}$ concatenated with the paths $S_{i}$ and $ T_{i}$  for $i\in [2,k]$ give a $(Y\setminus \{s_{1}, t_{1}\})$-linkage $\{L_{2},\ldots,L_{k}\}$. Hence the desired $Y$-linkage is as follows.  
 
\[L_{i}:=\begin{cases}s_{1}\pi_{R^{o}}^{F_{1}}(s_{1}) L_{1}' t_{1},& \text{for $i=1$;}\\s_{i}S_{i}\hat s_{i}\hat L_{i}\hat t_{i} T_{i}t_{i},& \text{otherwise.}
 \end{cases}\]
  
{\bf Assume $\St_{12}$  has more than one facet}. We have that \[\hat X=\{\hat s_{2},\ldots,\hat s_{k},\hat t_{2},\ldots,\hat t_{k}\}\subset V(\Gamma_{12}).\] 
Define
\begin{itemize}
\item $\A_{12}$ as the complex of $\St_{12}$ induced by $V(\St_{12})\setminus (V(F_{1})\cup V(F_{12}))$. 
\end{itemize}
Then the graph $G(\A_{12})$ of $\A_{12}$  coincides with the subgraph of $\Gamma_{12}$ induced by $V(\Gamma_{12})\setminus V(F_{12})$. \Cref{fig:Aux-Linked-Thm-Case4}(b) depicts this configuration.

Our strategy is first to bring the $d-3$ terminal vertices $\hat x$ in $\Gamma_{12}$ other than $\hat s_{2}$ and $\hat t_{2}$ into $F_{12}\setminus F_{1}$ through disjoint paths $\hat S_{i}$ and $\hat T_{j}$, without touching $\hat s_{2}$ and $\hat t_{2}$. Second, denoting by $\tilde s_{i}$ and $\tilde t_{j}$ the intersection of $\hat S_{i}$ and $\hat T_{j}$ with $V(F_{12})\setminus V(F_{1})$, respectively, we link the pairs $\{\tilde s_{i},\tilde t_{i}\}$ for $i=[2,k]$ in $F_{12}$ through disjoint paths $\tilde L_{i}$, without touching $s_{1}$; here we resort to the strong $(k-1)$-linkedness of $F_{12}$. We develop these ideas below. 

From  \cref{lem:technical}(iii), it follows that $\A_{12}$ is nonempty and contains a spanning strongly connected $(d-3)$-subcomplex, thereby implying, by \cref{prop:connected-complex-connectivity}, that \[\text{$G(\A_{12})$ is $(d-3)$-connected.}\]
 Since $\St_{12}$ contains more than one facet, the following sets exist:
 \begin{itemize}
 \item $U$, a $(d-2)$-face  in $F_{12}$ that contains $s_{1}$ and $\hat s_{2}$ ($= s_{2}$)  (\cref{rmk:opposite-vertex-1});
 \item $J_{12}$, the other facet in $\St_{12}$ containing $U$;
 \item $U_{J}$, the $(d-2)$-face in $J_{12}$ disjoint from $U$, and as a consequence, disjoint from $F_{12}$; 
 \item $\C_{U}$,  the subcomplex of $\B(U)$ induced by $V(U)\setminus V(F_{1})$, namely the antistar of $U\cap F_{1}$ in $U$; and 
 
\item  $\C_{U_{J}}$, the subcomplex of $\B(U_{J})$ induced by $V(U_{J})\setminus  V(F_{1})$. 
 \end{itemize}
	
 The subcomplex $\C_{U}$ is nonempty, since $\hat s_{2}\in V(U)\setminus V(F_{1})$, and so, thanks to \cref{lem:cube-face-complex}, it is a strongly connected $(d-3)$-complex. Then, from $C_{U}$ containing a $(d-3)$-face it follows that
 \begin{equation}\label{eq:Cubical-Linkedness-Case4-U-Cardinality}
 |V(\C_{U})|=|V(U)\setminus V(F_{1}))|\ge 2^{d-3}\ge d-1\; \text{for $d\ge 5$}.
 \end{equation}
 
The subcomplex $\C_{U_{J}}$ is nonempty: if $U_{J}\cap F_{1}=\emptyset$ then $\C_{U_{J}}=\B(U_{J})$; otherwise $\C_{U_{J}}$ is the antistar of $U_{J}\cap F_{1}$ in $U_{J}$, and since $U\cap F_{1}\ne \emptyset$ ($s_{1}$ is in both), it follows that $U_{J}\not \subseteq F_{1}$. Put differently, the vertex in $J_{12}$ opposite to $s_{1}$ is not in $U$, since $s_{1}\in U$, nor is it in $F_{1}$, and so it must be in $\C_{U_{J}}$.  Therefore, according to \cref{lem:cube-face-complex}, $\C_{U_{J}}$ is a strongly connected $(d-3)$-complex.   Hence, in both instances,
 \begin{equation}\label{eq:Cubical-Linkedness-Case4-U_J-Cardinality}
  |V(\C_{U_{J}})|=|V(U_{J})\setminus V(F_{1}))|\ge 2^{d-3}\ge d-1 \; \text{for $d\ge 5$}.
 \end{equation}
 
Recall that we want to bring every vertex in the set $\hat X$, which is contained in $\Gamma_{12}$, into $F_{12}\setminus F_{1}$. We construct $|\hat X\cap V(\A_{12})|$ pairwise disjoint paths $\hat S_{i}$ and $\hat T_{j}$ from $\hat s_{i}\in \A_{12}$ and $\hat t_{j}\in \A_{12}$, respectively,  to $V(F_{12})\setminus V(F_{1})$ as follows.  Pick a set \[W\subset V(\C_{U_{J}})\setminus \pi_{U_{J}}^{J_{12}}\left((\hat X\cup\{s_{1}\})\cap U\right)\]
of $|\hat X\cap V(\A_{12})|$ vertices in $\C_{U_{J}}$. Then $\pi_{U}^{{J^{12}}}(W)$ is disjoint from $(\hat X\cup\{s_{1}\})\cap U$. In other words, the vertices in $W$ are in $\C_{U_{J}}$ and are not projections  of the vertices in $(\hat X\cup\{s_{1}\}) \cap U$ onto $U_{J}$.  We show that the set $W$ exists, which amounts to showing that $\C_{U_{J}}$ has enough vertices to accommodate $W$.

First note that 
\begin{equation}
\begin{gathered}\label{eq:Cardinality-hat-X-A}
|\hat X\cap V(\A_{12})|+|(\hat X\cup\{s_{1}\})\cap V(F_{12})|=|\hat X\cup \{s_{1}\}|=d,\\ 
(\hat X\cup\{s_{1}\})\cap V(U)\subseteq (\hat X\cup\{s_{1}\})\cap V(F_{12}).
\end{gathered}  
\end{equation} 

If $U_{J}\cap F_{1}=\emptyset$ then $\C_{U_{J}}=\B(U_{J})$. And \eqref{eq:Cardinality-hat-X-A} together with $|V(U_{J})|=2^{d-2}\ge d$ for $d\ge 5$ gives the following chain of inequalities
\begin{multline*}
\left|V(C_{U_{J}})\setminus \pi_{U_{J}}^{J_{12}}\left((\hat X\cup\{s_{1}\})\cap V(U)\right)\right|\ge d-\left|(\hat X\cup \{s_{1}\})\cap V(U)\right|\\
\ge\left|\hat X\cup\{s_{1}\}\right|-\left|(\hat X\cup \{s_{1}\})\cap V(F_{12})\right|
=\left|\hat X\cap V(\A_{12})\right|=\left|W\right|,
\end{multline*}
as desired. 

Suppose now $U_{J}\cap F_{1}\ne \emptyset$. Since $s_{1}\in U\cap F_{1}$ and $J_{12}=\conv \{U\cup U_{J}\}$, the cube $J_{12}\cap F_{1}$ has opposite facets $U_{J}\cap F_{1}$ and $U\cap F_{1}$. From $s_{1}\in U\cap F_{1}$ it follows that $\pi_{U_{J}}^{J_{12}}(s_{1})\in U_{J}\cap F_{1}$, and thus, that $\pi_{U_{J}}^{J_{12}}(s_{1})\not \in \C_{U_{J}}$; here we use the following remark.
\begin{remark}
Let $(K,K^{o})$ be opposite facets in a cube $Q$ and let $B$ be a proper face of $Q$ such that $B\cap K\ne \emptyset$ and $B\cap K^{o}\ne \emptyset$. Then $\pi^{Q}_{K^{o}}(B\cap K)=B\cap K^{o}$. 
\end{remark}
\noindent Since $\pi_{U_{J}}^{J_{12}}(s_{1})\not \in \C_{U_{J}}$, using \eqref{eq:Cubical-Linkedness-Case4-U_J-Cardinality} and \eqref{eq:Cardinality-hat-X-A} we get  	   
\begin{multline*} 
\left|V(C_{U_{J}})\setminus \pi_{U_{J}}^{J_{12}}\left((\hat X\cup\{s_{1}\})\cap V(U)\right)\right|=\left|V(C_{U_{J}})\setminus \pi_{U_{J}}^{J_{12}}\left(\hat X\cap V(U)\right)\right|\\ \ge d-1-\left|\hat X\cap V(U)\right|
\ge\left|\hat X\right|-\left|\hat X\cap V(F_{12})\right|
=\left|\hat X\cap V(\A_{12})\right|=\left|W\right|.
\end{multline*}
{\bf In this way, we have shown that $\C_{U_{J}}$ can accommodate the set $W$.} We now finalise teh case.
  
There are at most $d-3$ vertices $\hat x$ in $\hat X\cap V(\A_{12})$ because $\hat s_{2}$ and $\hat t_{2}$ are already in $V(F_{12})\setminus V(F_{1})$. Since $G(\A_{12})$ is $(d-3)$-connected, we can find $|W|=|\hat X\cap V(\A_{12})|$ pairwise disjoint paths $\hat S_{i}'$ and $\hat T_{j}'$ in $\A_{12}$ from the terminals $\hat s_{i}$ and $\hat t_{j}$ in $\hat X\cap V(\A_{12})$ to $W$.  The $\hat X$-valid path $\hat S_{i}$ from $\hat s_{i}\in \A_{12}$ to $V(F_{12})\setminus V(F_{1})$ then consists of the subpath $\hat S_{i}':=\hat s_{i}-w_{i}$ with $w_{i}\in W$  plus the edge $w_{i}\pi_{U}^{J_{12}}(w_{i})$; from the choice of $W$ it follows that $\pi_{U}^{J_{12}}(w_{i})\not \in \hat X\cup\{s_{1}\}$. The paths $\hat T_{j}'$ and $\hat T_{j}$ are defined analogously. \Cref{fig:Aux-Linked-Thm-Case4}(c)-(d) depicts this configuration. 

Denote by $\tilde s_{i}$ the intersection of $\hat S_{i}$ and $V(F_{12})\setminus V(F_{1})$; similarly, define  $\tilde t_{j}$. Every terminal vertex $\hat x$ already in $F_{12}$ is also denoted by $\tilde x$, and in this case we let $\hat S_{i}$ or $\hat T_{j}$ be the vertex $\tilde x$.  
 
Now  $F_{12}$ contains the pairs $\left\{\tilde s_{i},\tilde t_{i}\right\}$ for $i\in [2,k]$ and the terminal $s_{1}$, as desired. Link these pairs in $F_{12}$ through disjoint paths $\tilde L_{i}$, each avoiding $s_{1}$, with the use of the strong $(k-1)$-linkedness of $F_{12}$ (\cref{thm:cube-strong-linkedness}). The paths $\tilde L_{i}$ concatenated with the paths $S_{i}$, $\hat S_{i}$, $T_{i}$ and $\hat T_{i}$ for $i\in [2,k]$ give a $(Y\setminus \{s_{1}, t_{1}\})$-linkage $\{L_{2},\ldots,L_{k}\}$. Hence the desired $Y$-linkage is as follows.   
 
\[L_{i}:=\begin{cases}s_{1}\pi_{R^{o}}^{F_{1}}(s_{1}) L_{1}' t_{1},& \text{for $i=1$;}\\s_{i}S_{i}\hat s_{i}\hat S_{i}\tilde s_{i}\tilde L_{i}\tilde t_{i}\hat T_{i}\hat t_{i} T_{i} t_{i},& \text{otherwise.}
 \end{cases}\]

\begin{case}\label{case:new-linkedness-thm-4} $| X\cap V(F_{1})|= d+1$ and the vertex $s_{1}$ is not in Configuration $d$F. \end{case}

Here we have that $V(\A_{1})\cap  X=\emptyset$. This case is decomposed into three main subcases A, B and C, based on the nature of the vertex $s_{1}^{o}$ opposite to $s_{1}$ in $F_{1}$, which is the only vertex in $F_{1}$ that does not have an image under the injection from $F_{1}$ to $\A_{1}$ defined in \cref{lem:projections-star}. And each subcase is then analysed for $d\ge 7$ and $d=5$ separately.   The difficulty with $d=5$ stems from the $(d-2)$-faces   of the polytope not being 2-linked (\cref{cor:nonsimplicial-3polytope}).

\subsection *{\bf \uppercase{Subcase} A for $d\ge 7$. The vertex $s_{1}^{o}$ opposite  to $s_{1}$ in $F_{1}$ does not belong to $ X$.} Let $ X':= X\setminus \{ t_{1}\}$ and let $ Y':= Y\setminus \{\{s_{1}, t_{1}\}\}$. Since $| X'|=d$,  the strong $(k-1)$-linkedness of $F_{1}$ (\cref{thm:cube-strong-linkedness}) gives a $ Y'$-linkage $\{ L_{2},\ldots, L_{k}\}$ in the facet $F_{1}$ with each path $ L_{i}:= s_{i}- t_{i}$ ($i\in [2,k]$) avoiding $s_{1}$.  We find pairwise distinct  neighbours $s_{1}'$ and $ t_{1}'$ in $\A_1$ of $s_{1}$ and $ t_{1}$, respectively. If none of the paths $ L_{i}$ touches $ t_{1}$, we find a path $ L_{1}:=s_{1}- t_{1}$ in $\St_{1}$ that contains a subpath in $\A_1$  between  $s_{1}'$ and $ t_{1}'$ (here use the connectivity of $\A_{1}$, \cref{prop:star-minus-facet}), and we are home. Otherwise,  assume that the path $ L_{j}$ contains $ t_{1}$. With the help of \cref{lem:projections-star}, find pairwise distinct  neighbours $ s_{j}'$ and $ t_{j}'$ in $\A_1$ of $ s_{j}$ and $ t_{j}$, respectively, such that the vertices $s_{1}'$,  $ t_{1}'$, $ s_{j}'$ and $ t_{j}'$ are pairwise distinct. According to \cref{prop:star-minus-facet-linkedness}, the complex $\A_1$ is 2-linked for $d\ge 7$. Hence, we can find disjoint paths $ L_{1}':=s_{1}'- t_{1}'$ and $ L_{j}':= s_{j}'- t_{j}'$ in $\A_{1}$, respectively; these paths naturally give rise to paths $ L_{1}:=s_{1}s_{1}'L_{1}'t_{1}'t_{1}$ in $\St_1$  and $ L_{j}:= s_{j}s_{j}'L_{j}'t_{j}'t_{j}$ in $\St_1$. The paths $\left\{L_{1},\ldots,L_{k}\right\}$ give the desired $Y$-linkage.

\subsection *{\bf \uppercase{Subcase} B for $d\ge 7$. The vertex $s_{1}^{o}$ opposite  to $s_{1}$ in $F_{1}$ belongs to $ X$ but is different from $ t_{1}$, say $s_{1}^{o}= s_{2}$.}  First find a neighbour $s_{1}'$ of $s_{1}$ and a neighbour $ t_{1}'$ of $ t_{1}$ in $\A_1$. 
 There is a neighbour $ s_{2}^{F_{1}}$ of $ s_{2}$ in $F_{1}$ that is either $ t_{2}$ or a vertex not in $ X$: $\{s_{1}, s_{2}\}\cap N_{F_{1}}( s_{2})=\emptyset$ and $|N_{F_{1}}( s_{2})|=d-1$. The link $\Lk_1$ of $s_{1}$ in $F_{1}$ contains  all the vertices in $F_{1}$ except $s_{1}$ and $ s_{2}$. 

Suppose $ s_{2}^{F_{1}}= t_{2}$. Let $ L_{2}:= s_{2} t_{2}$, and using the $(k-1)$-linkedness of $\Lk_1$ (\cref{prop:link-cubical}), find  disjoint paths  $ t_{1}- t_{2}$ and $ L_{i}:= s_{i}- t_{i}$  for $i\in [3,k]$ in $\Lk_1$. Then define a path $ L_{1}:=s_{1}- t_{1}$ in $\St_{1}$ that contains a subpath in $\A_1$  between $s_{1}'$ and $ t_{1}'$; here we use the connectivity of $\A_{1}$ (\cref{prop:star-minus-facet}).  The paths $\left\{L_{1},\ldots,L_{k}\right\}$ give the desired $Y$-linkage. 

Assume $ s_{2}^{F_{1}}$ is not in $ X$. Observe that $| (X\setminus \{s_{1}, s_{2}\})\cup \{ s_{2}^{F_{1}}\}|=d$. Using the  $(k-1)$-linkedness of $\Lk_1$ for $d\ge 7$ (\cref{prop:link-cubical}), find in $\Lk_1$ disjoint paths  $ L_{2}':= s_{2}^{F_{1}}- t_{2}$ and $ L_{i}':= s_{i}- t_{i}$  for $i\in [3,k]$. Since $ t_{1}$ is also in $\Lk_{1}$ it may happen that it lies in one of the  paths $ L_{i}'$. If $ t_{1}$ does not belong to any of the paths $ L_{i}'$  for $i\in [2,k]$, then find a path $ L_{1}:=s_{1} s_{1}'L'_{1} t'_{1}t_{1}$ in $\St_{1}$ where $L_{1}'$ is a subpath in $\A_1$  between $s_{1}'$ and $ t_{1}'$, using the connectivity of $\A_{1}$ (\cref{prop:star-minus-facet}). In this scenario, let $L_{2}:= s_{2} s_{2}^{F_{1}} L_{2}' t_{2}$ and $L_{i}:=L_{i}'$ for $i\in [3,k]$; the desired $Y$-linkage is given by the paths $\left\{L_{1},\ldots,L_{k}\right\}$.

If $ t_{1}$ belongs to one of the paths $ L_{i}'$ with $i\in [2,k]$, say $ L_{j}'$, then consider in $\A_1$ a neighbour $ t_{j}'$ of $ t_{j}$ and, either a neighbour $ s_{j}'$ of $ s_{j}$ if $j\ne 2$ or a neighbour $ s_{2}'$ of $ s_{2}^{F_{1}}$. From \cref{lem:projections-star} it follows that the vertices $s_{1}'$, $ t_{1}'$,  $ s_{j}'$ and $ t_{j}'$ can be taken pairwise distinct. Since $\A_1$ is 2-linked for $d\ge 7$ (see~\cref{prop:star-minus-facet-linkedness}), find in $\A_1$ a path $ L_{1}'$   between $s_{1}'$ and $ t_{1}'$ and a path $ L_{j}''$ between $ s_{j}'$ and $ t_{j}'$. As a consequence, we obtain in $\St_{1}$ a path  $ L_{1}:=s_{1}s_{1}' L_{1}' t_{1}' t_{1}$ and, either a path $ L_{j}:= s_{j} s_{j}' L_{j}'' t_{j}' t_{j}$ if $j\ne 2$ or a  path $ L_{2}:= s_{2} s_{2}^{F_{1}} s_{2}' L_{2}'' t_{2}' t_{2}$. In addition, let $L_{i}:=L_{i}'$ for $i\in [3,k]$ and $i\ne j$. The paths $\left\{L_{1},\ldots,L_{k}\right\}$ give the desired $Y$-linkage.

\subsection *{\bf \uppercase{Subcases A and B} for $d=5$. The vertex $s_{1}^{o}$ opposite to $s_{1}$ in $F_{1}$ either does not belong to $ X$ or belongs to $ X$ but is different from $ t_{1}$.}

  Let $X:=\{s_{1},s_{2},s_{3}, t_{1},t_{2},t_{3}\}$ be any set of six vertices in the graph $G$ of a cubical $5$-polytope $P$. Also let $Y:=\{\{s_{1},t_{1}\},\{s_{2},t_{2}\},\{s_{3},t_{3}\}\}$. We aim to find a $Y$-linkage $\{L_{1},L_{2},L_{3}\}$ in $G$ where $L_{i}$ joins the pair $\{s_{i},t_{i}\}$ for $i=1,2,3$. 
    
In both subcases there is a 3-face $R$ of $F_{1}$ containing both $s_{1}$ and $ t_{1}$. Let $J_{1}$ be the other facet in $\St_{1}$ containing $R$. Denote by $R_{J}$ and $R_{F}$ the ridges in $J_{1}$ and $F_{1}$, respectively, that are disjoint from $R$. Then $s_{1}^{o}\in R_{F}$. We need the following claim.

\begin{claim}\label{cl:d=5} If a 3-cube contains three pairs of terminals, there must exist two pairs of terminals in the 3-cube, say $\{s_1,  t_{1}\}$ and $\{ s_{2}, t_{2}\}$,  that are not  arranged in the cyclic order $s_1, s_{2}, t_{1}, t_{2}$ in a 2-face of the cube.
\end{claim}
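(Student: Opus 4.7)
The plan is to argue by contradiction: assume that \emph{every} two of the three pairs $\{s_1,t_1\}, \{s_2,t_2\}, \{s_3,t_3\}$ appear in the forbidden cyclic order $s_i,s_j,t_i,t_j$ on some 2-face of the 3-cube, and derive a contradiction.

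The first step is a structural observation about the forbidden pattern. In the cyclic order $s_i,s_j,t_i,t_j$ along the 4-cycle bounding a 2-face, the vertices $s_i$ and $t_i$ occupy non-adjacent positions of the square, hence they are opposite corners of that 2-face; the same holds for $s_j$ and $t_j$. Thus whenever two pairs of terminals are in the forbidden arrangement on a 2-face, they constitute the two diagonals of that 2-face. Under the contrary hypothesis, there therefore exist 2-faces $F$ and $F'$ of the 3-cube such that $\{s_1,t_1\}$ is a diagonal of both, with $\{s_2,t_2\}$ the complementary diagonal of $F$ and $\{s_3,t_3\}$ the complementary diagonal of $F'$.

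The contradiction then follows from a uniqueness property of the 3-cube: any two vertices at graph distance two differ in exactly two coordinates, so they lie as opposite corners of the unique 2-face obtained by fixing the remaining coordinate. Applied to $s_1,t_1$, this forces $F=F'$, whence $\{s_2,t_2\}=\{s_3,t_3\}$, contradicting the distinctness of the six terminals. The main (mild) obstacle in writing up the argument is simply to articulate this uniqueness fact cleanly and to notice that the cyclic order in the definition really does force each pair to be a diagonal; once both are in hand, the contradiction is immediate.
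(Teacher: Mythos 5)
Your proof is correct, and it reaches the conclusion by a slightly different route than the paper. The paper argues directly: if some terminal, say $s_1$, is in Configuration 3F (\cref{def:Conf-3F}), so that $s_1,x_1,t_1,x_2$ appear in cyclic order in a 2-face with $x_1,x_2\in X$, it picks a pair $\{s_2,t_2\}$ with $s_2\notin\{x_1,x_2\}$ and observes that $s_2$ cannot then be adjacent to both $s_1$ and $t_1$, because two vertices of a cube share at most two neighbours (\cref{rmk:cubical-common-neighbours}); hence that pair is not in the forbidden arrangement. You instead argue by contradiction: the forbidden cyclic order forces each pair to be a diagonal of the relevant 2-face, a distance-two pair of vertices spans a unique 2-face of $Q_3$, so the faces witnessing the interleaving of $\{s_1,t_1\}$ with $\{s_2,t_2\}$ and with $\{s_3,t_3\}$ coincide, forcing $\{s_2,t_2\}=\{s_3,t_3\}$ and contradicting the distinctness of the six terminals. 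The two arguments rest on the same rigidity fact about $Q_3$ --- two vertices at distance two have exactly two common neighbours, equivalently lie on a unique 2-face --- just packaged differently; yours is a touch cleaner and uses only the two hypotheses involving $\{s_1,t_1\}$, while the paper's direct version additionally identifies which pair escapes the bad configuration, which is immaterial for the claim itself.
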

\begin{claimproof} If no terminal in the cube is in Configuration 3F, we are done. So  suppose that one is, say $s_{1}$, and that the sequence $s_1,x_{1},  t_{1},x_{2}$ of vertices of $X$ is present in cyclic order in a 2-face. Without loss of generality, assume that $ s_{2}\not\in\{x_{1},x_{2}\}$. Then $ s_{2}$ cannot be adjacent to both $ s_{1}$ and $ t_{1}$, since the bipartite graph $K_{2,3}$ is not a subgraph of $G(Q_{3})$ (\cref{rmk:cubical-common-neighbours}). Thus  the sequence $s_1, s_{2}, t_{1}, t_{2}$ cannot be in a 2-face in cyclic order. 
\end{claimproof}  
 
{\bf Suppose all the six terminals are in the 3-face $R$.} By virtue of \cref{cl:d=5},  we may assume that the pairs $\{s_{1},  t_{1}\}$ and $\{  s_{{2}}, t_{{2}}\}$ are not arranged in the cyclic order $s_1, s_{2}, t_{1}, t_{2}$ in a 2-face of $R$.  \cref{prop:3-polytopes}  ensures that the pairs $\{\pi_{R_{J}}^{J_{1}}( s_{1}),\pi_{R_{J}}^{J_{1}}( t_{1})\}$ and $\{\pi_{R_{J}}^{J_{1}}( s_{2}),\pi_{R_{J}}^{J_{1}}( t_{2})\}$ in $R_{J}$ can be linked in $R_{J}$ through disjoint paths $ L_{1}'$ and $ L_{2}'$, since the sequence $\pi_{R_{J}}^{J_{1}}( s_{1}),\pi_{R_{J}}^{J_{1}}( s_{2}),\pi_{R_{J}}^{J_{1}}(t_{1}),\pi_{R_{J}}^{J_{1}}( t_{2})$ cannot be in a 2-face of $R_{J}$ in cyclic order. Moreover, by the connectivity of $R_{F}$, there is a path $ L_{3}'$ in $R_{F}$ linking the  pair $\{\pi_{R_{F}}^{F_{1}}( s_{3}),\pi_{R_{F}}^{F_{1}}( t_{3})\}$. The linkage $\{ L_{1}',  L_{2}',  L_{3}'\}$ can naturally be extended to a $ Y$-linkage $\{ L_{1},  L_{2},  L_{3}\}$  as follows.

\[L_{i}:=\begin{cases}
	s_{i}\pi_{R_{J}}^{J_{1}}( s_{i})L_{i}'\pi_{R_{J}}^{J_{1}}(t_{i})t_{i}, &\text{for $i=1,2$};\\
	s_{3}\pi_{R_{F}}^{F_{1}}( s_{3})L_{3}'\pi_{R_{F}}^{F_{1}}(t_{3})t_{3}, &\text{otherwise}.
\end{cases}
\]

{\bf Suppose that $R$ contains a pair $\{ s_{i}, t_{i}\}$ for $i=2,3$, say $\{ s_{2},  t_{2}\}$}. There are at most five terminals in $R$, and consequently, applying  \cref{lem:short-distance} to the polytope $F_{1}$ and its facet $R$,  we obtain an $ X$-valid  path $ L_{1}:=s_{1}- t_{1}$  in $R$ or an $ X$-valid path  $ L_{2}:= s_{2}- t_{2}$  in $R$. For the sake of concreteness, say an $ X$-valid path $ L_{2}$ exists in $R$. From the connectivity of $R_{F}$ and $R_{J}$ follows the existence of a path $ L_{3}'$ in $R_{F}$ between $\pi_{R_{F}}^{F_{1}}( s_{3})$ and $\pi_{R_{F}}^{F_{1}}( t_{3})$, and of a path $ L_{1}'$ in $R_{J}$ between $\pi_{R_{J}}^{J_{1}}(s_{1})$ and $\pi_{R_{J}}^{J_{1}}( t_{1})$. The linkage $\{ L_{1}', L_{2}', L_{3}'\}$ can be extended to a linkage $\{s_{1}- t_{1}, s_{2}- t_{2}, s_{3}- t_{3}\}$ in $\St_{1}$.
	
{\bf Suppose that the ridge $R$ contains no other pair from $ Y$ and that the ridge $R_{F}$ contains a  pair $( s_{i}, t_{i})$ ($i=2,3$)}.  Without loss of generality, assume $ s_{2}$ and $ t_{2}$ are in $R_{F}$. 

First suppose that $s_{3}\in R$, which implies that $ t_{3}\in R_{F}$. Further suppose that there is a path $T_{3}$ of length at most two from $t_{3}$ to $R$ that is disjoint from $ X\setminus\{ s_{3}, t_{3}\}$. Let $ \{ t_{3}'\}:=V(T_{3})\cap V(R)$. Use the 2-linkedness of $J_{1}$ (\cref{prop:4polytopes}) to find disjoint paths $ L_{1}:=s_{1}- t_{1}$ and $ L_{3}':= s_{3}- t_{3}'$ in $J_{1}$. Let $ L_{3}:= s_{3} L_{3}' t_{3}'T_{3} t_{3}$. Use the 3-connectivity of $R_{F}$ to find an $ X$-valid path $ L_{2}:= s_{2}- t_{2}$ in $R_{F}$ that is disjoint from $V(T_{3})$; note that $|V(T_{3})\cap V(R_{F})|\le 2$. The paths $\{ L_{1}, L_{2}, L_{3}\}$ give the desired  $Y$-linkage. Now suppose there is no such path $T_{3}$ from $ t_{3}$ to $R$. Then, the projection $\pi_{R}^{F_{1}}( t_{3})$ is in  $\{s_{1}, t_{1}\}$, say $\pi_{R}^{F_{1}}( t_{3})=t_{1}$; the projection $\pi_{R_{F}}^{F_{1}}(s_{1})$ is a neighbour of $ t_{3}$ in $R_{F}$; and both $ s_{2}$ and $ t_{2}$ are neighbours of $ t_{3}$ in $R_{F}$. This configuration implies that $s_{1}$ and $ t_{1}$ are adjacent in $R$. Let $ L_{1}:=s_{1} t_{1}$. Find a path $ L_{2}:= s_{2}- t_{2}$ in $R_{F}$ that is disjoint from $t_{3}$, using the 3-connectivity of $R_{F}$.   
Then find a neighbour $ s_{3}'$ in $\A_{1}$ of $ s_{3}$ and a neighbour $ t_{3}'$ in $\A_{1}$ of $t_{3}$; note that, since $\dist_{F_{1}}(s_{1}, t_{3})\le 2$, we have that $ t_{3}\ne s_{1}^{o}$. Find a path $ L_{3}$ in $\St_{1}$ between $ s_{3}$ and $ t_{3}$  that contains a subpath $L_{3}'$ in $\A_{1}$ between $ s_{3}'$ and $ t_{3}'$; here use the connectivity of $\A_{1}$ (\cref{prop:star-minus-facet}): $L_{3}:=s_{3}s_{3}'L_{3}'t_{3}'t_{3}$. The linkage $\{ L_{1}, L_{2}, L_{3}\}$ is the desired  $Y$-linkage.

Assume that $ s_{3}\in R_{F}$; by symmetry we can further assume that $ t_{3}\in R_{F}$. The connectivity of $R$ ensures the existence of a path $ L_{1}:= s_{1}- t_{1}$ therein.  In the case of $s_{1}^{o}\in  X$, without loss of generality, assume $s_{1}^{o}= s_{2}$. The 3-connectivity of $R_{F}$ ensures the existence of an $ X$-valid path $L_{2}:= s_{2}- t_{2}$ therein. Use \cref{lem:projections-star} to find pairwise distinct neighbours $ s_{3}'$ of $ s_{3}$ and $ t_{3}'$ of $ t_{3}$ in $\A_{1}$; these exist since $ s_{3}\ne s_{1}^{o}$ and $ t_{3}\ne s_{1}^{o}$. Using the connectivity of $\A_{1}$ (\cref{prop:star-minus-facet}), find a path $ L_{3}:= s_{3}- t_{3}$ in $\St_{1}$ that contains a subpath $ s_{3}'- t_{3}'$ in $\A_{1}$. The linkage $\{ L_{1}, L_{2}, L_{3}\}$ is the desired  $Y$-linkage.

{\bf Assume neither $R$ nor $R_{F}$ contains a pair $\{ s_{i}, t_{i}\}$ ($i=2,3$)}. Without loss of generality, assume  that $ s_{2}, s_{3}\in R$,  that $ t_{2}, t_{3}\in R_{F}$ and that $ t_{2}\ne s_{1}^{o}$.  

First suppose  that there exists a path $S_{3}$ in $F_{1}$ from $ s_{3}$ to $R_{F}$ that is  of length at most two and is disjoint from $ X\setminus \{ s_{3}, t_{3}\}$. Let $\{\hat s_{3}\}:=V(S_{3})\cap V(R_{F})$.   Find pairwise distinct neighbours $ s_{2}'$ and $ t_{2}'$ of $ s_{2}$ and $ t_{2}$, respectively, in $\A_{1}$. And find a path $ L_{2}:= s_{2}- t_{2}$ in $\St_{1}$ that contains a subpath $ s_{2}'- t_{2}'$ in $\A_{1}$ (using the connectivity of $\A_{1}$). 
Using the 3-connectivity of $R_{F}$ link the pair $\{\hat s_{3}, t_{3}\}$ in $R_{F}$ through a path $ L_{3}'$ that is disjoint from $t_{2}$. Let $L_{3}:=s_{3}S_{3}\hat s_{3}L_{3}'t_{3}$. Since \cref{cor:separator-independent}  ensures that any separator of size three in a 3-cube must be independent, we can find a path $ L_{1}:=s_{1}- t_{1}$ in $R$ that is disjoint from $s_{2}$ and $V(S_{3})\cap V(R)$; the set $V(S_{3})\cap V(R)$ has either cardinality one or contains an edge. The paths $\{ L_{1}, L_{2}, L_{3}\}$ form the desired  $Y$-linkage.

Assume that there is no such path $S_{3}$. In this case, the neighbours of $ s_{3}$ in $F_{1}$ are $s_{1}, t_{1}, s_{2}$ from $R$ and $ t_{2}$ from $R_{F}$. Use \cref{lem:projections-star} to find a neighbour  $ s_{3}'$ of $ s_{3}$ in $\A_{1}$. Again use \cref{lem:projections-star}  either to find  a neighbour $ t_{3}'$ of $ t_{3}$ if $ t_{3}\ne s_{1}^{o}$ or to find a neighbour $ t_{3}'$ of a neighbour $u$ of $ t_{3}$ in $R_{F}$ (with $u\ne t_{2}$) if $  t_{3}=s_{1}^{o}$.  Let $T_{3}$ be the path of length at most two from $ t_{3}$ to $\A_{1}$ defined as $T_{3}= t_{3} t_{3}'$ if $ t_{3}\ne s_{1}^{o}$ and $T_{3}= t_{3}u t_{3}'$ if $ t_{3}= s_{1}^{o}$. Find  a path $ L_{3}$ in $\St_{1}$ between $ s_{3}$ and $ t_{3}$  that contains a subpath in $\A_{1}$ between $ s_{3}'$ and $ t_{3}'$; here use the connectivity of $\A_{1}$ (\cref{prop:star-minus-facet}). We next find a path $S_{2}$ in $F_{1}$ from $ s_{2}$ to $R_{F}$ that is  of length at most two and is disjoint from $V(T_{3})\cup \{s_{1}, t_{1}, s_{3}\}$. There are exactly four disjoint such  $ s_{2}-R_{F}$ paths of length at most two, one through each of the neighbours of $ s_{2}$ in $F_{1}$. One such path is $ s_{2} s_{3} t_{2}$. Among the remaining three $ s_{2}-R_{F}$ paths, since none of them contains $s_{1}$ or $ t_{1}$ and since $|V(T_{3})\cap V(R_{F})|\le 2$, we find the path $S_{2}$.  Let $\hat s_{2}:=V(S_{2})\cap V(R_{F})$. Find a path $ L_{2}':=\hat s_{2}- t_{2}$ in $R_{F}$ that is disjoint from $V(T_{3})$, using the 3-connectivity of $R_{F}$. Let $ L_{2}:= s_{2}S_{2}\hat s_{2} L_{2}' t_{2}$. Since the vertices in $(V(S_{2})\cap V(R))\cup\{ s_{3}\}$ cannot separate $s_{1}$ from $ t_{1}$ in $R$ (\cref{cor:separator-independent}), find a path $ L_{1}:=s_{1}- t_{1}$ in $R$ disjoint from $V(S_{2})\cap V(R)\cup\{ s_{3}\}$; the set $V(S_{2})$ has cardinality one or contains one edge. The paths $\{ L_{1}, L_{2}, L_{3}\}$ form the desired  $Y$-linkage.

\subsection *{\bf \uppercase{Subcase} C  for $d\ge 7$. The vertex opposite to $s_{1}$ in $F_{1}$ coincides with $ t_{1}$. And the vertex $s_{1}$ is not in Configuration $d$F.}  Then $ t_{1}$ has no neighbour in $\A_1$. In fact, $F_{1}$ is the only facet in $\St_{1}$ containing $ t_{1}$. 

Because the vertex $s_{1}$ is not in Configuration $d$F, $ t_{1}$ has a neighbour $ t_{1}^{F_{1}}$ in $F_{1}$ that is not in $ X$. Here we  reason as in the scenario in which $ s_{2}=s_{1}^{o}$ and $ s_{2}$ has a neighbour not in $ X$. 

First, using the $(k-1)$-linkedness of $\Lk_1$ (\cref{prop:link-cubical})  find  disjoint paths $ L_{i}:= s_{i}- t_{i}$  in $\Lk_1$ for $i\in [2,k]$. It may happen that $t_{1}^{F_{1}}$ is in one of the paths $L_{i}$ for $i\in [2,k]$. Second, consider neighbours  $s_{1}'$ and $ t_{1}'$ in $\A_1$ of $s_{1}$ and $ t_{1}^{F_{1}}$, respectively. 

 If $ t_{1}^{F_{1}}$ doesn't belong to any path $ L_{i}$, then  find a path $ L_{1}:=s_{1}- t_{1}$ that contains the edge $ t_{1} t_{1}^{F_{1}}$ and a subpath $L_{1}'$ in $\A_1$  between $s_{1}'$ and $ t_{1}'$; that is,  $L_{1}=s_{1}s_{1}'L_{1}'t_{1}'t_{1}^{F_{1}}t_{1}$. The desired $Y$-linkage is given by  $\{ L_{1}, \ldots, L_{k}\}$.

 If $ t_{1}^{F_{1}}$ belongs to one of the paths $ L_{i}$ with $i\in [2,k]$, say $ L_{j}$, then disregard this path $L_{j}$ and consider in $\A_1$ a neighbour $ s_{j}'$ of $ s_{j}$ and a neighbour $ t_{j}'$ of $ t_{j}$. From \cref{lem:projections-star}, it follows that the vertices $s_{1}'$, $ t_{1}'$, $ s_{j}'$ and $ t_{j}'$ can be taken pairwise distinct. Using the 2-linkedness of $\A_{1}$ for $d\ge 7$, find a path $L_{1}'$ in $\A_{1}$ between $s_{1}'$ and $ t_{1}'$ and a path $ L_{j}'$ in $\A_1$ between $ s_{j}'$ and $ t_{j}'$. Let $L_{1}:=s_{1}s_{1}'L_{1}'t_{1}'t_{1}^{F_{1}}t_{1}$ and let $L_{j}:=s_{j}s_{j}'L_{j}'t_{j}'t_{j}$ be the new $s_{j}-t_{j}$ path.  The paths  $\{ L_{1}, \ldots, L_{k}\}$ form the desired $Y$-linkage.

\subsection *{\bf \uppercase{Subcase} C for $d=5$. The vertex opposite to $s_{1}$ in $F_{1}$ coincides with $ t_{1}$. And the vertex $s_{1}$ is not in Configuration $d$F.}  
 
Hence we may suppose  that $ t_{1}$ has a neighbour $ t_{1}'$ not in $ X$. We reason as in Subcases A and B for $d=5$. We give the details for the sake of completeness.  

Let $R$ denote the $3$-face in $F_{1}$ containing both $s_{1}$ and $ t_{1}'$; $\dist_{R}(s_{1}, t_{1}')=3$. Let $R_{F}$ be the $3$-face of $F_{1}$ disjoint from $R$.  Let $J_{1}$ be the other facet in $\St_{1}$ containing $R$ and let $R_{J}$ be the $3$-face of $J_{1}$ disjoint from $R$.
  
{\bf Suppose $R$ contains a pair $\{ s_{i}, t_{i}\}$ ($i=2,3$), say $( s_{2}, t_{2})$.} There are at most five terminals in $R$. Since the smallest face in $R$ containing $s_{1}$ and $ t_{1}'$ is 3-dimensional,  the sequence  $\pi_{R_{J}}^{J_{1}}(s_{1}), \pi_{R_{J}}^{J_{1}}( s_{2}), \pi_{R_{J}}^{J_{1}}( t_{1}'),\pi_{R_{J}}^{J_{1}}( t_{2})$ cannot appear in a 2-face of $R_{J}$ in cyclic order. As a consequence, the pairs  $\{\pi_{R_{J}}^{J_{1}}(s_{1}), \pi_{R_{J}}^{J_{1}}( t_{1}')\}$ and $\{\pi_{R_{J}}^{J_{1}}( s_{2}), \pi_{R_{J}}^{J_{1}}( t_{2})\}$ can be linked in $R_{J}$ through disjoint paths $ L_{1}'$ and $ L_{2}'$, thanks to \cref{prop:3-polytopes}. Let $ L_{1}:=s_{1}\pi_{R_{J}}^{J_{1}}(s_{1}) L_{1}'\pi_{R_{J}}^{J_{1}}( t_{1}') t_{1}' t_{1}$ and $ L_{2}:= s_{2}\pi_{R_{J}}^{J_{1}}(s_{2}) L_{2}'\pi_{R_{J}}^{J_{1}}( t_{2}) t_{2}$. From the 3-connectivity of $R_{F}$ follows the existence of a path $ L_{3}'$ in $R_{F}$ between $\pi_{R_{F}}^{F_{1}}( s_{3})$ and $\pi_{R_{F}}^{F_{1}}( t_{3})$ that avoids $ t_{1}$. Let $ L_{3}:= s_{3}\pi_{R_{F}}^{F_{1}}( s_{3}) L_{3}'\pi_{R_{F}}^{F_{1}}( t_{3}) t_{3}$. The paths $\{ L_{1}, L_{2}, L_{3}\}$ form the desired   $ Y$-linkage.

{\bf Suppose that the ridge $R$ contains no pair  $\{ s_{i}, t_{i}\}$ ($i=2,3$) and that the ridge $R_{F}$ contains a  pair $\{ s_{i}, t_{i}\}$ ($i=2,3$), say $\{ s_{2}, t_{2}\}$}. Then, there are at most five terminals in $R_{F}$. If there are at most four terminals in $R_{F}$, the 3-connectivity of $R_{F}$ ensures the existence of an $ X$-valid path $ L_{2}:= s_{2}- t_{2}$ in $R_{F}$; if there are exactly five terminals in $R_{F}$, applying \cref{lem:short-distance} to the polytope $F_{1}$ and its facet $R_{F}$ gives either an $ X$-valid path $ L_{2}:= s_{2}- t_{2}$ or an $ X$-valid path  $ L_{3}:= s_{3}- t_{3}$ in $R_{F}$. As a result, regardless of the number of terminals in $R_{F}$, we can assume there is an $ X$-valid path $ L_{2}:= s_{2}-t_{2}$ in $R_{F}$. Find pairwise distinct neighbours $ s_{3}'$ and $ t_{3}'$  in $\A_{1}$ of $ s_{3}$ and $ t_{3}$, respectively, and a path $ L_{3}$ in $\St_{1}$ between $ s_{3}$ and $ t_{3}$  that contains a subpath in $\A_{1}$ between $ s_{3}'$ and $ t_{3}'$; here use the connectivity of $\A_{1}$ (\cref{prop:star-minus-facet}). In addition, let $ L_{1}'$ be a path in $R$ between  $s_{1}$ and $ t_{1}'$;  here use the 3-connectivity of $R$ to avoid any terminal in $R$. Let $L_{1}:=s_{1}L_{1}'t_{1}'t_{1}$.  The  $Y$-linkage is given by the paths $\{ L_{1}, L_{2}, L_{3}\}$.

{\bf Assume neither $R$ nor $R_{F_{1}}$ contains a pair $\{ s_{i}, t_{i}\}$ ($i=2,3$).}  Without loss of generality,  we can assume $ s_{2}, s_{3}\in R$ and $ t_{2}, t_{3}\in R_{F}$. 

For {\it some} $i=2,3$, there exists a path $S_{i}$ in $F_{1}$ from $ s_{i}$ to $R_{F}$  that is of length at most two and is disjoint from $ t_{1}'$ and $ X\setminus \{ s_{i}, t_{i}\}$. Suppose there is no such path $S_{3}=s_{3}-R_{F}$. Then the neighbours of $ s_{3}$ in $F_{1}$ would be $s_{1}, t_{1}', s_{2}$ from $R$ and $ t_{2}$ from $R_{F}$. But, since there are exactly four $ s_{2}-R_{F}$ paths of length at most two in $F_{1}$ and since the vertex $ s_{2}$ could not be adjacent to $\{s_{1}, t_{1}'\}$, the existence of such a  path  $S_{2}= s_{2}-R_{F}$ would be guaranteed.  Hence assume the existence of such a path $S_{3}= s_{3}-R_{F}$. Let $\{\hat s_{3}\}:=V(S_{3})\cap V(R_{F})$. Find an $ X$-valid path $ L_{3}':=\hat s_{3}- t_{3}$ in $R_{F}$ using its 3-connectivity. Let $ L_{3}:= s_{3}S_{3}\hat s_{3} L_{3}' t_{3}$.  Then find neighbours $ s_{2}'$ and $ t_{2}'$ of $ s_{2}$ and $ t_{2}$, respectively, in $\A_{1}$, and a path $ L_{2}:= s_{2}- t_{2}$ in $\St_{1}$ that contains a subpath $ s_{2}'- t_{2}'$ in $\A_{1}$ (using the connectivity of $\A_{1}$). Since \cref{cor:separator-independent}  ensures that any separator of size three in a 3-cube must be independent, we can find an $ L_{1}':=s_{1}- t_{1}'$ in $R$ that is disjoint from $ s_{2}$ and $V(S_{3})\cap V(R)$; the set $V(S_{3})\cap V(R)$ has either cardinality one or contains an edge. Let  $ L_{1}:=s_{1} L_{1}' t_{1}' t_{1}$. The paths $\{ L_{1}, L_{2}, L_{3}\}$ form the desired   $ Y$-linkage.

And finally, the proof  of \cref{lem:star-cubical} is complete.
\end{proof}
	
\section{Strong linkedness of cubical polytopes}
The property of strong linkedness, see \cref{thm:4polytopes-strong-linkedness,thm:cube-strong-linkedness}, also holds for cubical polytopes.

\begin{theorem}[Strong linkedness of cubical polytopes]\label{thm:cubical-strong-linkedness} For every $d\ne 3$, a cubical $d$-polytope is strongly $\floor{(d+1)/2}$-linked.\end{theorem}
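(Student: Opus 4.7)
For odd $d$ the property of strong $k$-linkedness coincides with $k$-linkedness (with $k=(d+1)/2$), so the statement is exactly Theorem~\ref{thm:cubical}. The case $d=2$ is immediate and $d=4$ is Theorem~\ref{thm:4polytopes-strong-linkedness}, so the substantive work lies in the even case $d=2k$ with $k\ge 3$. Fix $X=\{s_1,t_1,\ldots,s_k,t_k,x\}$ with pairing $Y$ and excluded vertex $x$; I plan to produce a $Y$-linkage in $G(P)$ disjoint from $x$.

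The plan mirrors the proof of Theorem~\ref{thm:cubical}. Combining Balinski's theorem with Theorem~\ref{thm:Menger-consequence}, I would first build $d=2k$ pairwise disjoint paths that bring the vertices of $X\setminus\{s_1\}$, $x$ among them, into the star $\St_1$ of $s_1$; call the resulting endpoints $\bar s_2,\ldots,\bar t_k,\bar x$. The task then reduces to finding a $\bar Y$-linkage inside $\St_1$ that avoids $\bar x$, since concatenation with the paths from the first step produces the required $Y$-linkage in $G(P)$ (the interiors of those paths lie outside $\St_1$ and are pairwise disjoint, so $x$ cannot reappear).

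To find the linkage inside $\St_1$ I would re-run the case split of Lemma~\ref{lem:star-cubical}, indexed by $|\bar X\cap V(F_1)|$ for a facet $F_1$ of $\St_1$ containing $\bar t_1$ with the most terminals, but with the additional vertex $\bar x$ to avoid at each step. The tools transfer cleanly to the even-dimensional setting: every facet of $\St_1$ is a $(d-1)$-cube with $d-1=2k-1$ odd and $\ge 5$, so Theorem~\ref{thm:cube} yields its $k$-linkedness (and Theorem~\ref{thm:cube-strong-linkedness} the strong version, which coincides in the odd dimension); Proposition~\ref{prop:link-cubical} supplies the $(k-1)$-linkedness of the link $\Lk_1$ of $s_1$ in $F_1$; and Propositions~\ref{prop:star-minus-facet} and \ref{prop:star-minus-facet-linkedness}, together with Lemmas~\ref{lem:projections-star} and \ref{lem:technical}, give the connectivity and $2$-linkedness of the antistar $\A_1$ and the injections needed to relay terminals through it.

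The main obstacle is the analogue of Configuration $d$F: when all $d+1$ augmented terminals, including $\bar x$, crowd into a single facet $F_1$, since strong $k$-linkedness of a $(d-1)$-cube is defined for $d$ paired vertices and does not directly accommodate $d+1$. I plan to handle this exactly as Subcase~C of Lemma~\ref{lem:star-cubical} does: redirect one paired terminal either through $\A_1$ via Lemma~\ref{lem:projections-star} (exploiting the $2$-linkedness of $\A_1$ from Proposition~\ref{prop:star-minus-facet-linkedness}) or through a ridge disjoint from $F_1$ via Theorem~\ref{thm:cube}, thereby breaking the bad configuration and reducing to a previously-treated subcase in which $\bar x$ is automatically bypassed.
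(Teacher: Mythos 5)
Your overall framing is right (odd $d$ reduces to \cref{thm:cubical}, $d=4$ to \cref{thm:4polytopes-strong-linkedness}), but the route you take for even $d=2k\ge 6$ diverges from the paper's and contains a genuine gap. You reduce to the star $\St_{1}$ of the paired terminal $s_{1}$ and then propose to re-run the case analysis of \cref{lem:star-cubical} with the extra vertex $\bar x$ to be avoided. After that reduction, $\St_{1}$ carries $d+1=2k+1$ marked vertices ($s_{1}$ together with the images of the other $2k$ vertices of $X$, including $\bar x$), while every facet of $\St_{1}$ is a $(d-1)$-cube with $d-1=2k-1$ \emph{odd}. For odd-dimensional cubes the strong linkedness of \cref{thm:cube-strong-linkedness} coincides with plain $k$-linkedness --- its definition leaves no unpaired vertex to dodge --- so \cref{thm:cube} only lets you link $k$ pairs ($2k$ vertices) inside such a facet and gives you no mechanism for additionally avoiding $\bar x$. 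The numerology of \cref{lem:star-cubical} is different: there $d$ is odd, the facets of $\St_{1}$ have even dimension $2k-2$ and are \emph{strongly} $(k-1)$-linked, which is exactly what allows one extra vertex to be bypassed. So the tools do not ``transfer cleanly''; you would need a new, stronger statement (linking $k$ pairs in a $(2k-1)$-cube while avoiding one further prescribed vertex), which the paper does not provide and which fails in low dimensions (for $k=2$ the $3$-cube is not even $2$-linked). This deficit affects essentially every case of the lemma, not only the crowded-facet configuration you flag, so the proposal defers the entire difficulty to an unproven strong analogue of \cref{lem:star-cubical}.

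The paper sidesteps all of this by centring the reduction on the \emph{excluded} vertex $x$ rather than on $s_{1}$: using the $d$-connectivity of $G(P)$ and Menger's theorem, it routes the $2k$ paired terminals into $\lk(x,\B(P))$ along disjoint paths; by \cref{prop:link-polytope} that link is combinatorially a cubical $(d-1)$-polytope with $d-1$ odd, hence $d/2=k$-linked by \cref{thm:cubical}, and any linkage found inside the link avoids $x$ automatically since $x\notin\lk(x,\B(P))$. Concatenating with the connecting paths finishes the proof in a few lines. I recommend replacing your star-of-$s_{1}$ reduction with this link-of-$x$ reduction.
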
 
 
\begin{proof} Let $P$ be a cubical $d$-polytope. For odd $d$ \cref{thm:cubical-strong-linkedness,thm:cubical} are equivalent. So assume $d=2k$. Let $X$ be a set of $d+1$ vertices in $P$. Arbitrarily pair $2k$ vertices in $X$ to obtain $Y:=\{\{s_{1},t_{1}\},\ldots,\{s_{k},t_{k}\}\}$. Let $x$ be the vertex of $X$ not paired in $Y$. We find a $Y$-linkage $\{L_{1},\ldots, L_{k}\}$ where each path $L_{i}$ joins the pair $\{s_{i},t_{i}\}$ and avoids the vertex $x$.  

Using the $d$-connectivity of $G(P)$ and Menger's theorem, bring the $d=2k$ terminals in $X\setminus \{x\}$ to the link of $x$ in the boundary complex of $P$ through $2k$ disjoint paths $L_{s_{i}}$ and $L_{t_{i}}$ for $i\in [1,k]$. Let $s_{i}':=V(L_{s_{i}})\cap \lk(x)$  and $t_{i}':=V(L_{t_{i}})\cap \lk(x)$ for $i\in [1,k]$. Thanks to \cref{prop:link-polytope}, the link of $x$ is combinatorially equivalent to a cubical $(d-1)$-polytope, which is $d/2$-linked by \cref{thm:cubical}. Using the $d/2$-linkedness of $\lk(x)$, find disjoint paths $L_{i}':=s_{i}'-t_{i}'$ in $\lk(x)$. Observe that all these $k$ paths $\{L_{1}',\ldots,L_{k}'\}$ avoid $x$. Extend each path $L_{i}'$ with $L_{s_{i}}$ and $L_{t_{i}}$ to form a path $L_{i}:=s_{i}-t_{i}$ for $i\in [1,k]$. The paths $\{L_{1},\ldots,L_{k}\}$ forms the desired $Y$-linkage.
\end{proof}	
\section{Acknowledgments} The authors would like to thank the anonymous referees for their detailed comments and suggestions, including a new proof for \cref{lem:facets-association}. The presentation of the paper has greatly benefited from the referees' input.



\providecommand{\bysame}{\leavevmode\hbox to3em{\hrulefill}\thinspace}
\providecommand{\MR}{\relax\ifhmode\unskip\space\fi MR }
\providecommand{\MRhref}[2]{%
  \href{http://www.ams.org/mathscinet-getitem?mr=#1}{#2}
}
\providecommand{\href}[2]{#2}


\begin{thebibliography}{10}

\bibitem{Bal61}
M.~L. Balinski, \emph{On the graph structure of convex polyhedra in
  {$n$}-space}, Pacific J. Math. \textbf{11} (1961), 431--434. 
\bibitem{BolTho96}
B.~Bollob{\'a}s and A.~Thomason, \emph{Highly linked graphs}, Combinatorica
  \textbf{16} (1996), no.~3, 313--320.

\bibitem{ThiPinUgo18v3}
H.~T.~Bui, G.~Pineda-Villavicencio, and J.~Ugon, \emph{Connectivity of cubical
  polytopes}, J. Combin. Theory Ser. A \textbf{169} (2020), 105--126.
  
\bibitem{Die05}
R.~Diestel, \emph{{Graph Theory}}, 5th ed., Graduate Texts in Mathematics, vol.
  173, Springer-Verlag, Berlin, 2017.

\bibitem{Gal85}
S.~Gallivan, \emph{Disjoint edge paths between given vertices of a convex
  polytope}, J. Combin. Theory Ser. A \textbf{39} (1985), no.~1, 112--115.
  \MR{787721}

\bibitem{GooORo97-1st}
J.~E. Goodman and J.~O'Rourke (eds.), \emph{Handbook of discrete and
  computational geometry}, 1st ed., Discrete Mathematics and its Applications, Chapman \& Hall/CRC, Boca Raton, FL, 1997. 
  
\bibitem{Gru03}
B.~Gr{{\"u}}nbaum, \emph{Convex polytopes}, 2nd ed., Graduate Texts in
  Mathematics, vol. 221, Springer-Verlag, New York, 2003, Prepared and with a
  preface by V. Kaibel, V. Klee and G. M. Ziegler. 
\bibitem{KawKosYu06}
K.~Kawarabayashi, A.~Kostochka, and G.~Yu, \emph{On sufficient degree
  conditions for a graph to be $k$-linked}, Comb. Probab.
  Comput. \textbf{15} (2006), no.~5, 685--694.

\bibitem{LarMan70}
D.~G. Larman and P.~Mani, \emph{On the existence of certain configurations
  within graphs and the {$1$}-skeletons of polytopes}, Proc. London Math. Soc.
  (3) \textbf{20} (1970), 144--160. 

\bibitem{Mes16}
G.~M\'esz\'aros, \emph{On linkedness in the {C}artesian product of graphs},
  Period. Math. Hungar. \textbf{72} (2016), no.~2, 130--138.
\bibitem{PfePilSan12}
J.~Pfeifle, V.~Pilaud, and F.~Santos, \emph{Polytopality and {C}artesian
  products of graphs}, Israel J. Math. \textbf{192} (2012), no.~1, 121--141.
 \bibitem{Ram04}
M.~Ramras, \emph{Minimum cutsets in hypercubes}, Discrete Math.
  \textbf{289} (2004), no.~1, 193--198.

\bibitem{Sal67}
G.~T. Sallee, \emph{Incidence graphs of convex polytopes}, J. Combinatorial
  Theory \textbf{2} (1967), 466--506. 
\bibitem{Sey80}
P.~D. Seymour, \emph{Disjoint paths in graphs}, Discrete Math. \textbf{29}
  (1980), no.~3, 293--309. 
\bibitem{ThoWol05}
R.~Thomas and P.~Wollan, \emph{An improved linear edge bound for graph
  linkages}, European J. Combin. \textbf{26} (2005), no.~3-4, 309--324.
\bibitem{ThoWol08}
\bysame, \emph{The extremal function for 3-linked graphs}, J. Combin. Theory
  Ser. B \textbf{98} (2008), no.~5, 939--971. 

\bibitem{Tho80}
C.~Thomassen, \emph{{$2$}-linked graphs}, European J. Combin. \textbf{1}
  (1980), no.~4, 371--378. \MR{595938}

\bibitem{WerWot11}
A.~Werner and R.~F. Wotzlaw, \emph{On linkages in polytope graphs}, Adv. Geom.
  \textbf{11} (2011), no.~3, 411--427. 
\bibitem{Ron09}
R.~F. Wotzlaw, \emph{Incidence graphs and unneighborly polytopes}, Ph.D.
  thesis, Technical University of Berlin, Berlin, 2009.

\bibitem{Zie95}
G.~M. Ziegler, \emph{Lectures on polytopes}, Graduate Texts in Mathematics,
  vol. 152, Springer-Verlag, New York, 1995. 
\end{thebibliography}
\end{document}